\documentclass{birkjour}
 \newtheorem{thm}{Theorem}[section]
 
 \newtheorem{lem}[thm]{Lemma}
 \newtheorem{st}{Statement}
 
 \theoremstyle{definition}
 
 \theoremstyle{remark}

 \numberwithin{equation}{section}

\begin{document}

%
%
%
%
%
%
%
%
%

\title[The compactness of Moser-Trudinger type inequalities in the unit ball]
 {The compactness of Moser-Trudinger\\type inequalities in the unit ball}

\author[Qi Xia]{Qi Xia\textsuperscript{1,2,*}}

\address{%
\textsuperscript{1} School of Mathematical Sciences,
Dalian University of Technology, Dalian, 116024, China\\
\textsuperscript{2} DUT-BSU Joint Institute,
Dalian University of Technology, Dalian, 116024, China}

\email{xq19991231@mail.dlut.edu.cn}

\noindent \textbf{Corresponding author:} Qi Xia

\thanks{ This work is partially supported by the NSF of China under grant Nos. 12001466 and  U19A2079.}
\author[Yufeng Lu]{Yufeng Lu\textsuperscript{1}}

\address{\textsuperscript{1} School of Mathematical Sciences, Dalian University of Technology, Dalian, 116024, China}

\email{luf@dlut.edu.cn}

\keywords{Moser-Trudinger inequality, extremal, blow-up analysis, concentration-compactness principle}

\date{May 181, 2026}

\begin{abstract}
In this paper, we employ the concentration-compactness principle to show that if the Dirichlet norm is replaced by the standard Sobolev norm, then the supremum of 
\begin{flalign*}
	\int_{\mathbb{B}} |x|^{N \epsilon} \Phi \left( \alpha_N \left( 1+ \epsilon \right) |u|^{ \frac{N}{N-1} } \right) dx 
\end{flalign*}
over all such functions is uniformly bounded. Furthermore, we also prove the existence of extremals. Finally, we consider the compactness of the sequence of extremals of the inequalities and the limit of this sequence is the extremal of
\begin{flalign*}
	\int_{\mathbb{B}} \Phi \left( \alpha_N |u|^{ \frac{N}{N-1} } \right) dx
\end{flalign*}
in $C^1 \left( \mathbb{B} \right)$, where $\alpha_N = N \omega_{N-1}^{ \frac{1}{N-1} } $, $\Phi \left( t \right) := e^t - \sum_{j=0}^{N-2} \frac{t^j}{j!} $ and $\omega_{N-1}$ is the surface of the unit ball in $\mathbb{R}^N$.
\end{abstract}

\maketitle

\section{Introduction} 
Throughout the paper $\mathbb{B}$ is the unit sphere in $\mathbb{R}^N$, $N \geq 2$, and $\omega_{N-1}$ denotes the surface of $\mathbb{B}$. We also have that $W_0^{1,N} \left( \mathbb{B} \right)$  denotes the usual completion of $C_0^{\infty} \left( \mathbb{B} \right)$ in $W^{1,N} \left( \mathbb{B} \right)$ with norm $|\left| \nabla \cdot |\right|_{L^N} := \left( \int_{ \mathbb{B} } | \nabla \cdot |^N dx \right)^{ \frac{1}{N} }$; it is well known that
\begin{flalign*}
	 & W_0^{1,N} \left( \mathbb{B} \right) \not\hookrightarrow L^{\infty} \left( \mathbb{B} \right),\\
	 & W_0^{1,N} \left( \mathbb{B} \right) \hookrightarrow L^p \left( \mathbb{B} \right),~ \forall p \in [1, + \infty).
\end{flalign*}
In \cite{17} and \cite{18}, N. S. Trudinger and J. Moser proved that there exists $\alpha > 0$ such that $W_0^{1,N} \left( \mathbb{B} \right)$ is embedded
in the Orlicz space $L_{\phi_{\alpha}} \left( \mathbb{B} \right)$ determined by the Young function $\phi_{\alpha} \left( t \right) = e^{ \alpha |t|^{ \frac{N}{N-1} } } - 1$, and there exist a sharp constant $\alpha_N = N \omega_{N-1}^{ \frac{1}{N-1} }$ and positive constants $C$ such that 
\begin{flalign*}
	\underset{u \in W_0^{1,N} \left( \mathbb{B} \right) \setminus \{0\}, |\left| \nabla u |\right|_{ L^{N} } = 1 }{\sup} \int_{\mathbb{B}} e^{ \alpha |u|^{ \frac{N}{N-1} } } dx \leq C |\mathbb{B} |, ~ \forall \alpha \leq \alpha_N.
\end{flalign*}
Also, the supremum above is $+\infty$ if $\alpha > \alpha_N$. Moreover, for any bounded domain
$\Omega$, the constant $C$ can be attained, and we refer to \cite{22, 23, 24, 25}. Also, more extensive results about the Moser-Trudinger inequality on compact Riemannian manifolds can be found in several papers, such as \cite{29, 30, 31}.

On the crucial question of compactness for the embedding $W_0^{1,N} \left( \Omega \right) \hookrightarrow L_{ \phi_{\alpha} } \left( \Omega \right)$, P. L. Lions proved that the embedding $W_0^{1,N} \left( \mathbb{B} \right) \hookrightarrow L_{\phi_{\alpha}} \left( \mathbb{B} \right)$ is compact when positive constants are in small weak neighborhoods of 0 with bounded domain $\Omega$ in \cite{19}.
\begin{st}
	Let $\{ u_k \}_k$ be a sequence of functions in $W_0^{1,N} \left( \mathbb{B} \right)$ with $ |\left| \nabla u_k |\right|_{L^N} =1$ such that $u_k \rightharpoonup u_0 \neq 0$ in $W_0^{1,N} \left( \mathbb{B} \right)$. Then we have 
	\begin{flalign*}
		\underset{k}{\sup} \int_{\mathbb{B}} e^{ \alpha_N p |u_k|^{ \frac{N}{N-1} } } dx < \infty,
	\end{flalign*}
	for any $0 < p < \frac{1}{ \left( 1- |\left| \nabla u_0 |\right|_{L^N}^N \right)^{ \frac{1}{N-1} } }$.
\end{st}
It is clear that this result gives more precise information than the classical type Moser-Trudinger inequality when $u_k \rightharpoonup u_0$ in
$W_0^{1,N} \left( \Omega \right)$ with $u_0 \neq 0$. In \cite{9}, an interesting extension of Moser-Trudinger inequality is to construct Trudinger-Moser type
inequalities on bounded domains with Dirichlet norm or unbounded domains $\Omega$ including $\mathbb{R}^N$ with norm as follows,
\begin{flalign*}
	|\left| \cdot |\right| : = \left( \int_{\Omega} |\nabla \cdot|^N + |\cdot|^N dx \right)^{ \frac{1}{N} }.
\end{flalign*}
Let
\begin{flalign*}
	\Phi \left( t \right) = e^t - \sum_{j=0}^{N-2} \frac{t^j}{j !},
\end{flalign*}
that was ﬁrst considered by
D. M. Cao in $\mathbb{R}^2$ in \cite{10} and for any dimensions by J. M. B. do \'{O} in \cite{11}. Moreover, Y. Li and B. Ruf proved the following results in \cite{12},
\begin{flalign*}
	\underset{u \in W_0^{1,N} \left( \Omega \right) \setminus \{0\}, |\left| u|\right| \leq 1}{\sup} \int_{\Omega} \Phi \left( \alpha |u|^{ \frac{N}{N-1} } \right) dx < \infty,~ \alpha\leq \alpha_N.
\end{flalign*}
Also, $\alpha_N$ is a sharp constant, that is, the supremum is $+\infty$ for any $\alpha > \alpha_N$. On the other hand, in \cite{20, 21, 28}, M. de Souza and others considered the following type inequality,
\begin{flalign*}
	\underset{ u \in W^{1,N} \left(\mathbb{R}^N \right) \setminus \{0\}, |\left| u |\right| \leq 1 }{\sup} \int_{\mathbb{R}^N} \Psi \left( \alpha_N \left( 1 + \beta |\left| u|\right|_{L^N}^N \right)^{ \frac{1}{N-1} } |u|^{ \frac{N}{N-1} } \right) dx < \infty,
\end{flalign*}
for any $0 \leq \beta <1$, where
\begin{flalign*}
	\Psi \left( t \right) = e^t - \sum_{j=0}^{N-1} \frac{t^j}{j!} = \Phi \left( t \right) - \frac{t^{N-1} }{\left( N-1 \right)!}.
\end{flalign*}

Inspired by the principle due to P. L. Lions and others, M. de Souza and others also establish the following
improvement of the Trudinger–Moser inequality as follows in \cite{9},
\begin{st}
	Let $\{u_k\}_k$ be a sequence in $W^{1,N} \left( \mathbb{R}^N\right)$ such that $|\left| u_k |\right|=1$ and $u_k \rightharpoonup u_0 \neq 0$ in $W^{1,N} \left( \mathbb{R}^N\right)$. If
	\begin{flalign*}
		0 < p < p_N \left( u_0 \right) : = \frac{1}{\left( 1 - |\left| u_0 |\right|^N \right)^{\frac{1}{N-1}  }  },
	\end{flalign*}
    then
    \begin{flalign*}
   	    \underset{k}{\sup} \int_{ \mathbb{R}^N } \Phi \left( \alpha_N p u_k \right) dx < \infty.
    \end{flalign*}
    Moreover, $ p_N \left( u_0 \right)$ is sharp.
\end{st} 
From the rearrangement argument and Young inequality, Y. Yang extended this type of Moser-Trudinger inequality in the presence of singular potentials in \cite{13}, 
\begin{flalign*}
	\underset{u \in W^{1,N} \left( \mathbb{R}^N \right) \setminus \{0\}, \int_{\mathbb{R}^N} |\nabla u|^N + \tau |u|^N dx \leq1 }{\sup} \int_{ \mathbb{R}^N } \frac{ \Phi \left( \alpha_N \left( 1- \beta \right) |u|^{ \frac{N}{N-1} }  \right) }{ |x|^{ N \beta } }  dx < \infty,
\end{flalign*}
for any $0 \leq \beta <1$ and $\tau >0$. Yang also established the existence of extremals by using blow-up analysis in \cite{14}. 

Recently, Y. Wang and Y. Yang proved the compactness of extremals for the critical singular
Trudinger–Moser inequality in $\mathbb{R}^2$ in \cite{16}. It is shown that up to a subsequence, $\{ u_{\beta} \}_{\beta}$ converges to some
functions $u_0$ in $C^1 \left( \Omega \right)$ as $\beta \rightarrow 0^+$. Also, X. Li and others considered the compactness of extremals for it in \cite{15} and proved that the sequence of functions converges to extremal of
\begin{flalign*}
	\underset{u \in W^{1,N} \left( \mathbb{R}^N \right) \setminus \{0\}, \int_{\mathbb{R}^N} |\nabla u|^N + \tau |u|^N dx \leq1 }{\sup} \int_{ \mathbb{R}^N } \Phi \left( \alpha_N |u|^{ \frac{N}{N-1} }  \right)  dx
\end{flalign*}
in $C^1 \left( \mathbb{R}^N \right)$.  From a geometric point of view, we let $\omega_0 : = \sum_{j=1}^N d x_j^2$ be the standard Euclidean metric, and $\omega_{\epsilon} = |x|^{N \epsilon} \omega_0$ be the conical metric. Then we have $d \nu_{\omega_{\epsilon}} = |x|^{N \epsilon} dx$ and $\omega_{\epsilon} \rightarrow \omega_0$ as $\epsilon \rightarrow 0^+$ in $C^2_{loc} \left( \mathbb{B} \setminus \{0\} \right)$. 

In this paper, we will extend these results to more common Moser-Trudinger type inequalities on the unit ball. Then we consider the Moser-Trudinger inequality with conical metric as follows,
\begin{flalign*}
	\underset{u \in W_0^{1,N} \left( \mathbb{B} \right) \setminus \{0\}, |\left| u|\right| \leq 1}{\sup} \int_{\mathbb{B} } |x|^{N \epsilon} \Phi \left( \alpha \left( 1+ \epsilon \right)  |u|^{ \frac{N}{N-1} } \right) dx,
\end{flalign*} 
where $\epsilon > 0$. Firstly, we consider the existence of extremal functions and the sharp constant; while we consider the compactness of Moser-Trudinger functionals and the limit of it. Then, we use this result and the method mentioned in \cite{9} to show the concentration-compactness principle in $\mathbb{B}$, which can be used to prove the existence of extremal functions of the Moser-Trudinger inequality in the first main result as follows,
\begin{thm}[Theorem 1.1]
	There exist constants C such that, for the unit ball $\mathbb{B} \subset \mathbb{R}^N$,  
	\begin{flalign*}
		\underset{u \in W_0^{1,N} \left( \mathbb{B} \right) \cap \mathcal{S} \setminus \{0\}, ||\nabla u||_{L^N} \leq 1 }{\sup} \int_{\mathbb{B}} |x|^{N \epsilon} \Phi \left( \alpha_N \left(1 + \epsilon  \right) |u|^{ \frac{N}{N-1} } \right)dx \leq C.
	\end{flalign*} 
	For any $\alpha $ and any $u \in W_0^{1,N} \left( \mathbb{B} \right) \cap \mathcal{S} \setminus \{0\}$, where $\mathcal{S}$ is the set of all radially symmetric functions, there holds that
	\begin{flalign*}
		\int_{\mathbb{B}} |x|^{N \epsilon} \Phi \left( \alpha \left(1 +\epsilon \right) |u|^{ \frac{N}{N-1} } \right) dx < \infty. 
	\end{flalign*}
	Moreover, the inequality is sharp, where $\alpha_N = N \omega_{N-1}^{\frac{1}{N-1}}$, and $\Phi(t) = e^t - \sum_{n=0}^{N-2} \frac{t^n}{n !}$. Also, the supremum of this inequality can be attained by some functions. 
\end{thm}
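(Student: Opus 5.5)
The proof rests on the radial change of variables that turns the weighted, supercritical-looking functional into the classical Moser functional. For a radial $u\in W_0^{1,N}(\mathbb{B})$, write $u(x)=v(|x|)$ and set $r=|x|$, $s = r^{1+\epsilon}$, $w(s)=v(r)$. Then $|x|^{N\epsilon}dx=\omega_{N-1}r^{N\epsilon+N-1}dr$. Since $s^{N-1}ds=(1+\epsilon)r^{(1+\epsilon)N-1}dr$, this becomes
\begin{flalign*}
|x|^{N\epsilon}dx=\frac{\omega_{N-1}}{1+\epsilon}s^{N-1}ds .
\end{flalign*}
For the gradient, $v'(r)=(1+\epsilon)r^{\epsilon}w'(s)$, so $|v'|^N r^{N-1}dr=(1+\epsilon)^{N-1}|w'(s)|^N s^{N-1}ds$. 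Setting $\tilde w(y)=(1+\epsilon)^{\frac{N-1}{N}}w(|y|)$, we get $\|\nabla \tilde w\|_{L^N}=\|\nabla u\|_{L^N}\le 1$. Also $(1+\epsilon)|u|^{\frac{N}{N-1}}=|\tilde w|^{\frac{N}{N-1}}$, because $\bigl((1+\epsilon)^{\frac{N-1}{N}}\bigr)^{\frac{N}{N-1}}=1+\epsilon$. Hence
\begin{flalign*}
\int_{\mathbb{B}}|x|^{N\epsilon}\Phi\left(\alpha(1+\epsilon)|u|^{\frac{N}{N-1}}\right)dx=\frac{1}{1+\epsilon}\int_{\mathbb{B}}\Phi\left(\alpha|\tilde w|^{\frac{N}{N-1}}\right)dy ,
\end{flalign*}
and $u\mapsto\tilde w$ is a bijection of $W_0^{1,N}(\mathbb{B})\cap\mathcal{S}$ preserving the Dirichlet norm. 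I will check this identity carefully first, since everything reduces to it.

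With this identity, the uniform bound follows from the classical Moser inequality quoted in the introduction: $\Phi(t)\le e^t$, and $\int_{\mathbb{B}}e^{\alpha_N|\tilde w|^{N/(N-1)}}\le C|\mathbb{B}|$ for $\|\nabla\tilde w\|_{L^N}\le1$. Finiteness for every $\alpha$ and every fixed $u$ follows from Trudinger's embedding applied to $\tilde w$, namely $e^{\alpha|\tilde w|^{N/(N-1)}}\in L^1$ for all $\alpha$, which one gets by splitting $\tilde w$ into a bounded part plus a part with small Dirichlet norm. Sharpness is transported from the classical sharpness. The Moser sequence $m_k$ (radial, $\|\nabla m_k\|_{L^N}=1$) makes $\int\Phi(\alpha|m_k|^{N/(N-1)})\to\infty$ for $\alpha>\alpha_N$. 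Its pull-back $u_k$ then makes the weighted functional blow up with $\alpha(1+\epsilon)$, $\alpha>\alpha_N$, in place of $\alpha_N(1+\epsilon)$. Here we only need that $\Phi$ differs from $e^t$ by a polynomial, which stays bounded along the sequence by the $L^p$ embedding.

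For attainment, the identity again reduces the problem to the existence of a radial extremal for $\sup\int_{\mathbb{B}}\Phi(\alpha_N|w|^{N/(N-1)})$ over $\|\nabla w\|_{L^N}\le1$. Carleson–Chang (for $N=2$) and the references \cite{22,23,24,25} give an extremal for $e^t$. Since $\Phi(t)=e^t-P(t)$ and $P$ is a polynomial, I will instead run the standard scheme directly. Take a maximizing sequence, symmetrize it by Schwarz rearrangement (which keeps it radial and does not increase the norm), and extract a weak limit $w_0$. If $w_0\ne0$, the Lions statement (Statement 1 in the introduction) gives uniform $L^p$ bounds on $\Phi(\alpha_N|w_k|^{N/(N-1)})$ for some $p>1$, hence Vitali convergence and $w_0$ is an extremal.

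The main obstacle is the vanishing or concentration case $w_0=0$. There one must show that the concentration level, which by Carleson–Chang is $|\mathbb{B}|(1+e^{1+\frac12+\dots+\frac1{N-1}})$ minus the polynomial terms, tending to $|\mathbb{B}|$ minus zero, is strictly below the supremum. I would first try comparing with a test function: the Moser-type function glued to a Green-function profile, whose value exceeds the concentration level. If that is too delicate, I would fall back on the fact that the lower-order terms $P(\alpha_N|w_k|^{N/(N-1)})\to0$ in $L^1$ when $w_k\rightharpoonup0$. Concentration then yields exactly the $e^t$ concentration level minus $|\mathbb{B}|$, which is still strictly less than the $e^t$ supremum minus $|\mathbb{B}|$ because Carleson–Chang's strict inequality holds. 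The extremal $\tilde w$ then pulls back to an extremal $u$.
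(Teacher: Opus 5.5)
Your reduction is the one the paper uses. Its Step 2 substitution $v(r)=(1+\epsilon)^{1-\frac1N}u(r^{\frac{1}{1+\epsilon}})$ is exactly your $\tilde w$. Your verification of $\int_{\mathbb{B}}|x|^{N\epsilon}\Phi(\alpha(1+\epsilon)|u|^{\frac{N}{N-1}})\,dx=\frac{1}{1+\epsilon}\int_{\mathbb{B}}\Phi(\alpha|\tilde w|^{\frac{N}{N-1}})\,dy$, with $\|\nabla \tilde w\|_{L^N}=\|\nabla u\|_{L^N}$, is correct. The uniform bound is fine. So is finiteness for every $\alpha$; more directly, $|x|^{N\epsilon}\le 1$ plus Trudinger suffices, which is cleaner than the paper's H\"older step. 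Sharpness via the pulled-back Moser sequence is also fine. The paper declares the theorem proved right after the reduction, so on attainment you go beyond it. You correctly isolate the crux. Set $H_{N-1}=1+\frac12+\cdots+\frac{1}{N-1}$ and $S_\Phi=\sup_{\|\nabla w\|_{L^N}\le 1}\int_{\mathbb{B}}\Phi(\alpha_N|w|^{\frac{N}{N-1}})\,dx$. A radial maximizing sequence cannot concentrate provided $S_\Phi>|\mathbb{B}|e^{H_{N-1}}$.

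\textbf{The gap: this strict inequality for $N\ge 3$.} Your fallback does not prove it. For every $w\ne 0$,
$\int_{\mathbb{B}}\Phi(\alpha_N|w|^{\frac{N}{N-1}})=\int_{\mathbb{B}}e^{\alpha_N|w|^{\frac{N}{N-1}}}-|\mathbb{B}|-\sum_{j=1}^{N-2}\frac{\alpha_N^j}{j!}\int_{\mathbb{B}}|w|^{\frac{jN}{N-1}}$,
and the last sum is strictly positive. Hence $S_\Phi\le S_e-|\mathbb{B}|$, where $S_e$ is the Carleson--Chang supremum. Their strict inequality $S_e-|\mathbb{B}|>|\mathbb{B}|e^{H_{N-1}}$ therefore gives no lower bound on $S_\Phi$. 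The lower-order terms vanish along concentrating sequences. At the $e^t$-extremal, however, they are a fixed positive amount that may absorb the entire gap. (For $N=2$, $\Phi(t)=e^t-1$ and $S_\Phi=S_e-|\mathbb{B}|$, so there the fallback is valid. Note also that Carleson--Chang treat the ball in every dimension, not only $N=2$.)

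\textbf{What is needed instead.} You must actually carry out your first option, and it is quantitatively tight. Take the Moser bubble of scale $\eta$ on $\mathbb{B}_{R\eta}$, glued to $C^{-\frac{1}{N-1}}G$ with $G=-\frac{N}{\alpha_N}\ln|x|$ (so $A_0=0$ on the ball), and normalize to Dirichlet norm one. Then $\alpha_N C^{\frac{N}{N-1}}=-N\ln\eta+O(1)$. The core contributes at least $|\mathbb{B}|e^{H_{N-1}}(1-O(R^{-\frac{N}{N-1}}))$, and the polynomial terms subtracted there are negligible. The only gain is the first surviving term of $\Phi$ on the outer region, $\frac{\alpha_N^{N-1}}{(N-1)!}C^{-\frac{N}{N-1}}\int_{\mathbb{B}\setminus\mathbb{B}_{R\eta}}G^N\,dx$, which is merely of order $1/|\ln\eta|$. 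So $R$ must satisfy $R\eta\to 0$ and $R^{-\frac{N}{N-1}}=o(1/|\ln\eta|)$, for example $R=|\ln\eta|$. This is the computation in Part 1 of the proof of the paper's last lemma. Once that estimate is in place, your compactness argument in the case $w_0\ne 0$ completes the proof of attainment.
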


Moreover, we consider whether a maximizer sequence $\{ u_{\epsilon} \}_{\epsilon}$ converges or not when the conical metric converges to the Euclidean metric. Then we state the second main result as follows,

\begin{thm}[Theorem 1.2]
	Assume that $\{u_{\epsilon} \}_{\epsilon}$ is a sequence of maximizers for the supremum in Theorem 1.1. Then up to a subsequence, there exists some function $u_0$ satisfying $u_{\epsilon} \rightarrow u_0$ in $C^1 \left( \overline{\mathbb{B}} \right)$ and $u_0$ is an extremal function of the following supremum   
	\begin{flalign*}
		\underset{u \in W_0^{1,N} \left( \mathbb{B} \right) \cap \mathcal{S} \setminus \{0\}, ||\nabla u||_{L^N} \leq 1 }{\sup} \int_{\mathbb{B}} \Phi \left( \alpha_N |u|^{ \frac{N}{N-1} } \right)dx.
	\end{flalign*}
\end{thm}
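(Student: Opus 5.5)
Since the statement is restricted to radial functions, I will work throughout in the radial variable and use the change of variables that turns the weighted problem into an unweighted one. For $u \in W_0^{1,N}(\mathbb{B}) \cap \mathcal{S}$, write $u(x) = u(r)$ and set
\[
v(s) := (1+\epsilon)^{\frac{N-1}{N}}\, u\bigl(s^{\frac{1}{1+\epsilon}}\bigr), \qquad s = r^{1+\epsilon}.
\]
A direct computation gives $\int_{\mathbb{B}} |\nabla v|^N dx = \int_{\mathbb{B}} |\nabla u|^N dx$ and
\[
\int_{\mathbb{B}} |x|^{N\epsilon}\,\Phi\bigl(\alpha(1+\epsilon)|u|^{\frac{N}{N-1}}\bigr)\,dx = \frac{1}{1+\epsilon}\int_{\mathbb{B}} \Phi\bigl(\alpha |v|^{\frac{N}{N-1}}\bigr)\,dx .
\]
To check this, note that $r^{N-1}r^{N\epsilon}\,dr$ becomes $(1+\epsilon)^{-1}s^{N-1}\,ds$, and the powers of $(1+\epsilon)$ in the Dirichlet integral cancel exactly with the chosen prefactor. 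Hence the weighted supremum in Theorem 1.2 equals $(1+\epsilon)^{-1}$ times the radial unweighted supremum $S_0 := \sup \int_{\mathbb{B}} \Phi(\alpha_N |v|^{\frac{N}{N-1}})$. Moreover, $u_\epsilon$ is a maximizer if and only if $v_\epsilon$ is a radial maximizer of $S_0$. This is the key structural observation, and it reduces the theorem to compactness of radial maximizers of a fixed problem, which Theorem 1.1 says is attained.

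First, I will take the family $v_\epsilon$ and pass to a subsequence with $v_\epsilon \rightharpoonup v_0$ weakly in $W_0^{1,N}$ and almost everywhere. I may take $v_\epsilon \geq 0$ after replacing by $|v_\epsilon|$, and $\|\nabla v_\epsilon\|_{L^N} = 1$. I will then rule out concentration. If $v_0 = 0$, the radial concentration-compactness principle (Lions' Statement 1 in the form used to prove Theorem 1.1) gives one of two outcomes. Either $\Phi(\alpha_N v_\epsilon^{N/(N-1)}) \to \Phi(0)$ in $L^1$, which would force the integral below $S_0$ since $S_0 > 0$; or the sequence concentrates at the origin, in which case $\limsup \int \Phi \le |\mathbb{B}|\,\text{(Carleson--Chang type bound)}$, which is strictly less than $S_0$ by the test-function argument already used for attainment in Theorem 1.1. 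If $v_0 \neq 0$, then Statement 1 gives uniform $L^p$ bounds on $e^{\alpha_N v_\epsilon^{N/(N-1)}}$ for some $p > 1$. Vitali's theorem then yields $\int \Phi(\alpha_N v_\epsilon^{N/(N-1)}) \to \int \Phi(\alpha_N v_0^{N/(N-1)})$, so $v_0$ is a maximizer of $S_0$. Strict monotonicity of $\Phi$ then forces $\|\nabla v_0\|_{L^N} = 1$, hence strong $W^{1,N}$ convergence.

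Next comes the upgrade to $C^1(\overline{\mathbb{B}})$. Each $v_\epsilon$ satisfies the Euler--Lagrange equation
\[
-\Delta_N v_\epsilon = \lambda_\epsilon^{-1}\, v_\epsilon^{\frac{1}{N-1}}\, \Phi'\bigl(\alpha_N v_\epsilon^{\frac{N}{N-1}}\bigr).
\]
The multiplier $\lambda_\epsilon$ is bounded below away from zero, by testing with $v_\epsilon$ and using the convergence of the integrals above. The uniform $L^p$ bound, $p > 1$, then makes the right-hand side uniformly bounded in $L^q$ for some $q > 1$. Radial ODE integration, $r^{N-1}|v_\epsilon'|^{N-2}v_\epsilon' = -\int_0^r(\cdots)$, gives uniform $L^\infty$ bounds first and then uniform $C^{1,\gamma}(\overline{\mathbb{B}})$ bounds, the latter also following from Tolksdorf/Lieberman regularity. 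Arzel\`a--Ascoli then gives $v_\epsilon \to v_0$ in $C^1(\overline{\mathbb{B}})$.

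Finally, I transfer back: $u_\epsilon(r) = (1+\epsilon)^{-\frac{N-1}{N}} v_\epsilon(r^{1+\epsilon})$. Since $v_\epsilon \to v_0$ in $C^1$ and $r^{1+\epsilon} \to r$ in $C^1([0,1])$, the derivative $u_\epsilon'(r) = (1+\epsilon)^{1/N} r^{\epsilon} v_\epsilon'(r^{1+\epsilon})$ converges uniformly. Indeed, $r^\epsilon \to 1$ uniformly away from $0$, and near $0$ we have $v_\epsilon'(0) = 0$ with a uniform H\"older bound, so $|r^\epsilon v_\epsilon'(r^{1+\epsilon})| \le C r^{\gamma}$ there. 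Hence $u_\epsilon \to u_0 := v_0$ in $C^1(\overline{\mathbb{B}})$, and $u_0$ is an extremal of the stated supremum. The main obstacle is the exclusion of concentration when $v_0 = 0$, which requires the strict inequality between $S_0$ and the concentration level. I would take this strict gap from the proof of attainment in Theorem 1.1 rather than redo the Carleson--Chang/Lin-type test function computation.
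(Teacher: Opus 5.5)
Your proposal is correct, but it takes a different route from the paper.

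\textbf{The paper's route.} The paper proves Theorem 1.2 by a blow-up analysis run directly on the weighted Euler--Lagrange equations. It assumes $c_\epsilon=u_\epsilon(0)\to\infty$ and bounds $\lambda_\epsilon^{\phi}$ from below. It then rescales at scale $r_\epsilon^{1/(1+\epsilon)}$ to get the bubble $\phi_0$, proves the truncation estimate for $\min\{u_\epsilon,\varrho c_\epsilon\}$, and shows $c_\epsilon^{1/(N-1)}u_\epsilon\to G$. From this it derives the blow-up level $\frac{\omega_{N-1}}{N}e^{\alpha_N A_0+\sum_{j=1}^{N-1}1/j}$ and contradicts it with explicit test functions; $C^1$ convergence then follows from elliptic estimates.

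\textbf{Your route.} You use the radial change of variables, which the paper invokes only in Step 2 of the proof of Theorem 1.1, to identify each $u_\epsilon$ with a maximizer $v_\epsilon$ of one fixed unweighted problem $S_0$. The $\epsilon$-dependence then disappears entirely. What remains is compactness of the set of radial maximizers of $S_0$, which you get from:
\begin{itemize}
\item Lions' dichotomy;
\item the Carleson--Chang bound $|\mathbb{B}|e^{\sum_{j=1}^{N-1}1/j}$ for concentrating sequences;
\item the strict gap above that level.
\end{itemize}
This is followed by regularity and an explicit transfer back. Your handling of the factor $r^{\epsilon}$ near the origin, using the uniform H\"older bound on $v_\epsilon'$ with $v_\epsilon'(0)=0$, is exactly what that last step needs.

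\textbf{Comparison.} Your argument is much shorter and treats the weight exactly rather than perturbatively. It also shows that the $u_\epsilon$ are literally reparametrized maximizers of a single problem. The paper's argument is self-contained in deriving the concentration level through the Green function expansion. In principle it would extend to non-radial or non-ball settings, where no such change of variables exists.

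\textbf{Shared ingredient.} Both arguments hinge on the same strict inequality $S_0>|\mathbb{B}|e^{\sum_{j=1}^{N-1}1/j}$; note that $A_0=0$ on the unit ball. For $N\ge 3$ this does not follow formally from Carleson--Chang's strict inequality for $e^{t}$, because $\Phi$ removes the terms of order $\le N-2$. It has to come from a test-function computation that keeps the gain $\frac{\alpha_N^{N-1}}{(N-1)!}\int|u|^N$, as in the paper's final lemma. That lemma is stated in weighted form, which is equivalent to your unweighted form under the change of variables.
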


The remaining part of this paper is organized as follows. In Section 2, we display several
key estimates to prove the concentration-compactness principle in Lemma 2.3. While Theorem 1.1 is proved in Section 3, and Theorem 1.2 is proved by blow up analysis from \cite{26, 27} in Section 4.

In order to prove the second part of Theorem 1.1., we prove that, for any fixed constant $\alpha > \alpha_N (1 + N \epsilon)$, there exist sequences $\{ u_{n} \}_{n} \subset W_0^{1,N} \left( \mathbb{B} \right) \cap \mathcal{S} \setminus \{0\}$ with $|| \nabla u_{n}||_{L^N} \leq 1$ such that 
\begin{flalign*}
	\underset{n \rightarrow \infty}{\lim} \int_{ \mathbb{B} } |x|^{N \epsilon} \Phi \left( \alpha |u_n|^{ \frac{N}{N-1} } \right) dx = + \infty.
\end{flalign*}
From Lions' Lemma, we get $u_n \rightharpoonup 0$. Then by the compact embedding theorem, we may assume that $||u_n||_{L^p} \rightarrow 0$ for any $p>1$. Consequently, $|| \nabla u_n||_{L^N} \rightarrow 1$, and 
\begin{flalign*}
	\alpha \left( \frac{u_n}{||\nabla u_n||_{L^N} } \right)^{ \frac{N}{N-1} } > \beta u_n^{ \frac{N}{N-1} },
\end{flalign*} 
where $\beta \in (\alpha_N \left( 1 + N \epsilon \right), \alpha)$. Then we have that
\begin{flalign*}
	\underset{n \rightarrow \infty}{\lim} \int_{\mathbb{B}} |x|^{N \epsilon} \Phi \left( \alpha \left( \frac{ |u_n| }{ ||\nabla u_n ||_{L^N} } \right)^{ \frac{N}{N-1} }  \right) dx & \geq \underset{n \rightarrow \infty}{\lim} \int_{\mathbb{B}} |x|^{N \epsilon} \Phi \left( \beta |u_n|^{ \frac{N}{N-1} }  \right) dx\\
	& \geq 	\underset{n \rightarrow \infty}{\lim} \int_{\mathbb{B}} |x|^{N \epsilon} \left( e^{ \beta |u_n|^{ \frac{N}{N-1} } } - 1 \right) dx\\
	& = + \infty. 
\end{flalign*}
The first part of the Theorem 1.1 and the Theorem 1.2  will be proved by blow up analysis. 

From defination of $\Phi$, we have that
\begin{flalign*}
	- \operatorname{div} \left( |\nabla u|^{N-2} \nabla u \right) = \frac{1}{\lambda^{\phi}} |x|^{N \epsilon} |u|^{ \frac{1}{N-1} } \Phi^{\prime} \left( \beta |u|^{ \frac{N}{N-1} }\right),
\end{flalign*}
where 
\begin{flalign*}
	\lambda^{\phi}_{\epsilon} = \int_{\mathbb{B}} |x|^{N \epsilon} |u|^{ \frac{N}{N-1} } \Phi^{\prime} \left( \beta |u|^{ \frac{N}{N-1} } \right) dx.
\end{flalign*}
In addition, from the maximum principle and elliptic theory, we have that the functions $u$ mentioned above satisfy $u>0$ in $\mathbb{B}$ and $u=0$ on $\partial \mathbb{B}$.

\section{Some Lemmas}

\begin{lem} \textbf{\cite{3}} \label{lem: 2.1}
	Let $u \in W^{1,N} \left( \mathbb{R}^N \right)$, and let $u^*$ be the Schwarz symmetrization of $u$. If $||\nabla u^*||_{L^N} \leq 1$, then for
	each $R > 0$ and $q > 0$, there exists a positive constant $C = C(q,R, N)$ such that
	\begin{flalign*}
		\int_{|x| \geq R} \Phi \left( q |u^*|^{ \frac{N}{N-1} } \right) dx \leq C(q,R,N).
	\end{flalign*}
\end{lem}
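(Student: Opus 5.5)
The plan is to exploit that $u^*$ is radially nonincreasing, so away from the origin it is pointwise bounded. Then use that $\Phi(t)$ vanishes to order $N-1$ at $t=0$, so on the region $|x|\ge R$ the integrand is controlled by $|u^*|^N$. One point must be made explicit first. A bound on $\|\nabla u^*\|_{L^N}$ alone cannot control $u^*$ pointwise on an unbounded region: the one-dimensional estimate $\int_r^\infty s^{-1}\,ds$ diverges. I will therefore read the hypothesis, as in the cited source \cite{3} and in the norm $\|\cdot\|$ used throughout the introduction, as $\|u\|^N=\|\nabla u\|_{L^N}^N+\|u\|_{L^N}^N\le 1$. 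Since Schwarz symmetrization preserves $L^N$ norms and does not increase the Dirichlet norm, this gives $\|u^*\|_{L^N}\le 1$ as well. The constant will depend only on $q,R,N$ through this normalization.

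\textbf{Step 1 (radial lemma).} Since $u^*\ge 0$ is radially nonincreasing, for every $x\neq 0$,
\begin{flalign*}
\|u\|_{L^N}^N=\|u^*\|_{L^N}^N\ \ge\ \int_{|y|\le |x|}|u^*(y)|^N\,dy\ \ge\ |u^*(x)|^N\,\frac{\omega_{N-1}}{N}|x|^N .
\end{flalign*}
Hence $|u^*(x)|\le (N/\omega_{N-1})^{1/N}|x|^{-1}$, and so $u^*\le M:=(N/\omega_{N-1})^{1/N}R^{-1}$ on $\{|x|\ge R\}$.

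\textbf{Step 2 (elementary bound on $\Phi$).} Since $\Phi(t)=\sum_{j\ge N-1}t^j/j!$, we have for $t\ge 0$
\begin{flalign*}
\Phi(t)\le \frac{t^{N-1}}{(N-1)!}\sum_{k\ge 0}\frac{t^k}{k!}=\frac{t^{N-1}e^t}{(N-1)!}.
\end{flalign*}
Apply this with $t=q|u^*|^{N/(N-1)}\le qM^{N/(N-1)}$ on $\{|x|\ge R\}$, noting that $t^{N-1}=q^{N-1}|u^*|^N$. This gives
\begin{flalign*}
\int_{|x|\ge R}\Phi\left(q|u^*|^{\frac{N}{N-1}}\right)dx\le \frac{q^{N-1}}{(N-1)!}\,e^{qM^{\frac{N}{N-1}}}\int_{\mathbb{R}^N}|u^*|^N dx\le C(q,R,N).
\end{flalign*}

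\textbf{Main obstacle.} The only real issue is the one flagged at the start: securing an $L^N$ bound on $u^*$, since the gradient bound alone is not sufficient on the unbounded region. Once that bound is available, the argument is a direct pointwise estimate followed by integration. Notably, the Moser–Trudinger threshold plays no role, because the exponential factor is bounded on $\{|x|\ge R\}$.
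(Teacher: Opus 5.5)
The paper gives no proof of this lemma; it only cites \cite{3}. Your argument is correct and is the standard proof. The radial bound $u^*(x)\le (N/\omega_{N-1})^{1/N}\|u\|_{L^N}|x|^{-1}$ keeps the exponential factor bounded on $\{|x|\ge R\}$. Because $\Phi$ vanishes to order $N-1$ at $0$, the bound $\Phi(t)\le t^{N-1}e^t/(N-1)!$ then reduces the integrand to a constant times $|u^*|^N$. Note that your argument only uses $\|u\|_{L^N}\le 1$; the gradient bound plays no role.

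Your reinterpretation of the hypothesis is necessary, not merely convenient. Take a fixed nonnegative, radial, nonincreasing $v\in C_0^\infty(\mathbb{R}^N)$ with $\|\nabla v\|_{L^N}=1$, and set $u_\lambda(x)=v(x/\lambda)$. Then $u_\lambda^*=u_\lambda$ and $\|\nabla u_\lambda\|_{L^N}=1$ for every $\lambda$. Since $\Phi(t)\ge t^{N-1}/(N-1)!$, we get $\int_{|x|\ge R}\Phi(q|u_\lambda|^{\frac{N}{N-1}})\,dx\ge \frac{q^{N-1}}{(N-1)!}\,\lambda^N\int_{|y|\ge R/\lambda}|v|^N\,dy\to+\infty$. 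So the lemma as printed, with only the Dirichlet norm, is false on $\mathbb{R}^N$, and the full-norm normalization you adopt is required.

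Where the paper actually uses the lemma (Case 2 of Lemma 2.4), the functions lie in $W_0^{1,N}(\mathbb{B})$ and the region is $\mathbb{B}\setminus\mathbb{B}_R$. There the gradient bound alone does suffice. Writing $u_k^*$ as a function of $r=|x|$, H\"older gives $u_k^*(r)=\int_r^1|u_k^{*\prime}(s)|\,ds\le \omega_{N-1}^{-1/N}\ln^{\frac{N-1}{N}}(1/r)$. The divergent $\int_r^\infty s^{-1}\,ds$ you point to is replaced by $\int_r^1 s^{-1}\,ds$, and the region has finite measure. Either your corrected version or this variant covers the paper's application.
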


\begin{lem} \textbf{\cite{2}} \label{lem: 2.2}
	Let $u \in W_0^{1,N} \left( \mathbb{B}_R \right)$ be such that $ ||\nabla u||_{L^N} \leq 1$. If $u^*$ is the decreasing rearrangement
	of $u$, then
	\begin{flalign*}
		u^* (t) \leq \left\lbrace \frac{1}{N \omega_{N-1}^{ \frac{1}{N-1} } }\ln \left( \frac{| \mathbb{B}_R| }{t} \right) \right\rbrace^{ \frac{N-1}{N} },
	\end{flalign*}
    for all $ t \in \left(0, |\mathbb{B}_R| \right)$.
\end{lem}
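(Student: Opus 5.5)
This is the classical Talenti/Alvino-type pointwise bound for the decreasing rearrangement. I will work with the one-dimensional profile. Fix $u$, and let $u^\ast$ be its decreasing rearrangement on $(0,|\mathbb{B}_R|)$. The associated Schwarz symmetrization $u^\sharp(x)=u^\ast(\omega_{N-1}|x|^N/N)$ is radial, nonincreasing, lies in $W_0^{1,N}(\mathbb{B}_R)$, and by the Pólya--Szegő inequality satisfies $\|\nabla u^\sharp\|_{L^N}\le\|\nabla u\|_{L^N}\le 1$. So it suffices to prove the bound for radial nonincreasing $v(r)=u^\sharp(x)$, $|x|=r$, with $v(R)=0$ and $\omega_{N-1}\int_0^R|v'(r)|^N r^{N-1}\,dr\le 1$.

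For $0<r<R$, integrate $-v'$ from $r$ to $R$ and apply Hölder's inequality with the weight $s^{N-1}$:
\begin{flalign*}
v(r)=\int_r^R(-v'(s))\,ds\le\left(\int_r^R|v'|^N s^{N-1}ds\right)^{\frac1N}\left(\int_r^R s^{-1}ds\right)^{\frac{N-1}{N}}\le\omega_{N-1}^{-\frac1N}\left(\ln\frac Rr\right)^{\frac{N-1}{N}}.
\end{flalign*}
Next convert to the measure variable. Set $t=\omega_{N-1}r^N/N=|\mathbb{B}_r|$, so that $\ln(R/r)=\frac1N\ln(|\mathbb{B}_R|/t)$. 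This gives
\begin{flalign*}
u^\ast(t)\le\omega_{N-1}^{-\frac1N}N^{-\frac{N-1}{N}}\left(\ln\frac{|\mathbb{B}_R|}{t}\right)^{\frac{N-1}{N}}=\left\{\frac{1}{N\,\omega_{N-1}^{\frac{1}{N-1}}}\ln\frac{|\mathbb{B}_R|}{t}\right\}^{\frac{N-1}{N}},
\end{flalign*}
since $\big(N\omega_{N-1}^{1/(N-1)}\big)^{-(N-1)/N}=N^{-(N-1)/N}\omega_{N-1}^{-1/N}$. This is the claimed estimate.

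Two points need care. First, the normalization of $u^\ast$: if $u$ changes sign we rearrange $|u|$, and this does not increase the gradient norm either. Second, absolute continuity of $v$ on $(r,R)$, which holds because radial $W^{1,N}$ functions are locally absolutely continuous away from the origin. The only nontrivial input is Pólya--Szegő. The rest is the one-line Hölder computation, and the main thing to get right there is the constant bookkeeping in the change of variables.
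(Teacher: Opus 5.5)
Your proof is correct, including the constant: the weighted Hölder step gives $v(r)\le\omega_{N-1}^{-1/N}\left(\ln (R/r)\right)^{(N-1)/N}$, and $|\mathbb{B}_R|/t=(R/r)^N$ turns this into exactly the stated bound. The paper does not prove this lemma but quotes it from [2]; your Pólya--Szegő plus weighted Hölder argument is the standard proof, and it is the same estimate the paper itself uses in the proof of Lemma 2.4, where Hölder is applied in the measure variable $t$ with weight $N^{\frac{N-1}{N}}\omega_{N-1}^{\frac{1}{N}}t^{\frac{N-1}{N}}$ instead of in $r$. That difference is purely cosmetic.
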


\begin{lem} \textbf{\cite{4}} \label{lem: 2.3}
	We consider the function $\zeta$: $\mathbb{N} \times \mathbb{R} \rightarrow \mathbb{R}$,
	\begin{flalign*}
		\zeta \left(N,t \right) : = e^t - \sum_{n=0}^{N-2} \frac{t^n}{n!}.
	\end{flalign*}
	Let $t \geq 0$, $p \geq 1$ be real numbers, and let $N \geq 2$ be an integer. Then there holds
	\begin{flalign*}
		\left( \zeta \left( N, t \right) \right)^p \leq \zeta \left( N, p t \right).
	\end{flalign*}
\end{lem}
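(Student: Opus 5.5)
Writing $\zeta(N,t)=\Phi(t)$, I will work from its power series. For $t\ge 0$,
\begin{flalign*}
	\zeta(N,t) = \sum_{n=N-1}^{\infty} \frac{t^n}{n!} = t^{N-1} g(t), \qquad g(t) := \sum_{k=0}^{\infty} \frac{t^k}{(k+N-1)!}.
\end{flalign*}
The inequality is trivial at $t=0$ and at $p=1$, so I fix $t>0$, $p>1$ and take logarithms. Since $(pt)^{N-1} = p^{N-1}t^{N-1}$, the claim $\zeta(N,t)^p \le \zeta(N,pt)$ becomes
\begin{flalign*}
	(p-1)(N-1)\ln t + p \ln g(t) \le (N-1)\ln p + \ln g(pt).
\end{flalign*}
Rather than comparing these directly, I will reduce everything to a single monotonicity property of one function of one variable.

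\textbf{Reduction.} Set $h(s) := \ln \zeta(N,e^s)$ for $s\in\mathbb{R}$. The claim then reads $p\,h(s) \le h(s+\ln p)$ for all $p\ge1$. I will use the following equivalent formulation. Define $F(t) := \ln\zeta(N,t)/t$, which makes sense for $t$ with $\zeta(N,t) \ge 1$. If $\zeta(N,t)<1$, the left-hand side is handled separately, as described below. Since $\zeta(N,\cdot)$ is increasing, the condition $\zeta(N,t)\ge1$ forces $\zeta(N,pt)\ge1$ too. On that range the claim $p\ln\zeta(N,t)\le \ln\zeta(N,pt)$ is exactly $F(t)\le F(pt)$. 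So it suffices to show that $F$ is nondecreasing wherever $\zeta\ge1$.

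\textbf{Key step: the ratio bound.} Differentiating, $F'(t)\ge 0$ is equivalent to
\begin{flalign*}
	t\,\zeta'(N,t) \ge \zeta(N,t)\ln \zeta(N,t).
\end{flalign*}
Because $\zeta' = \zeta + \frac{t^{N-2}}{(N-2)!}$, we have $\zeta'\ge\zeta$. It is therefore enough to check that $\ln\zeta(N,t) \le t$ whenever $\zeta(N,t)\ge1$, and this holds because $\zeta(N,t)\le e^t$. This gives $t\zeta' \ge t\zeta \ge \zeta\ln\zeta$ on the relevant range.

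\textbf{Case $\zeta(N,t)<1$.} Here the plan is to use that $0<\zeta(N,t)^p\le \zeta(N,t)$, so it suffices to have $\zeta(N,t)\le\zeta(N,pt)$, and that follows from monotonicity. This is the easy case.

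\textbf{Expected obstacle.} The reduction looks almost too easy, so I will double-check two points: that $F$ is monotone on the whole half-line $\{\zeta\ge1\}$, which is an interval because $\zeta$ is increasing, and the boundary between the two cases, where $t$ lies below the threshold and $pt$ lies above it. In that mixed case the left side $\zeta(N,t)^p\le 1\le \zeta(N,pt)$ holds directly. With these checks in place, the proof is a one-line derivative computation plus $\Phi\le e^t$.
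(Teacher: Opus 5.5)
Your proof is correct. The paper itself gives no argument for this lemma; it only imports it from \cite{4}, so there is no in-text proof to match.

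Your route is to show that $F(t)=\ln\zeta(N,t)/t$ is nondecreasing. This is equivalent to the inequality holding for every $p\ge 1$ at once (take $p=s/t$). It uses only two facts: $\zeta'(N,t)=\zeta(N,t)+t^{N-2}/(N-2)!\ge\zeta(N,t)>0$, and $\zeta(N,t)\le e^t$.

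An equally short alternative is induction on $N$. Set $\zeta(1,t)=e^t$, where equality holds, and use $\partial_t\zeta(N,t)=\zeta(N-1,t)$. Then $f(t)=\zeta(N,pt)-\zeta(N,t)^p$ satisfies $f(0)=0$ and $f'(t)=p\bigl[\zeta(N-1,pt)-\zeta(N,t)^{p-1}\zeta(N-1,t)\bigr]\ge 0$. This holds because $0\le\zeta(N,t)\le\zeta(N-1,t)$ and the inductive hypothesis gives $\zeta(N-1,t)^p\le\zeta(N-1,pt)$.

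The two approaches buy different things. The induction exploits the recursive structure in $N$ and avoids logarithms. Your argument needs no induction, handles each $N$ directly, and isolates exactly which properties of $\zeta$ are used.

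Two simplifications are available to you:
\begin{itemize}
\item The case distinction is unnecessary. Since $\zeta(N,t)>0$ for $t>0$, and $\ln\zeta<0\le t$ whenever $\zeta<1$, the chain $t\zeta'\ge t\zeta\ge\zeta\ln\zeta$ holds on all of $(0,\infty)$. Multiplying $F(t)\le F(pt)$ by $pt$ then gives the claim directly.
\item The preliminary rewriting through $g$ and $h$ is never used and can be deleted.
\end{itemize}
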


\begin{lem} \label{lem: 2.4}
	Let $\{ u_k \}_k \subset W_0^{1,N} \left( \mathbb{B} \right)$ be a sequence such that $||u_k||_{W_0^{1,N}} =1$ and $u_k \rightharpoonup u_0\neq 0$ in $W_0^{1,N} \left( \mathbb{B} \right)$. If
	\begin{flalign*}
		0 < p < p(u_0) := \frac{1}{\left(1 - ||\nabla u_0||_{L^N}^N \right)^{ \frac{1}{N-1} } },
	\end{flalign*}
    then
    \begin{flalign*}
    	\underset{k}{\sup} \int_{ \mathbb{B} } \Phi(\alpha_N p |u_k|^{ \frac{N}{N-1} }) dx < \infty.
    \end{flalign*}
\end{lem}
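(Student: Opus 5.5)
We follow Lions' concentration-compactness argument, adapted to the Sobolev-normalized setting. On $\mathbb{B}$ the relevant quantity is the Dirichlet norm; we read $\|u_k\|_{W_0^{1,N}}$ as $\|\nabla u_k\|_{L^N}=1$. If instead the full norm $(\|\nabla u_k\|_N^N+\|u_k\|_N^N)^{1/N}=1$ is meant, then $\|\nabla u_k\|_N\le 1$ and the argument below only improves.

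\textbf{Step 1: reduce to an exponential estimate.} Since $\Phi(t)\le e^t$ and $|\mathbb{B}|<\infty$, it suffices to bound $\int_{\mathbb{B}} e^{\alpha_N p|u_k|^{N/(N-1)}}dx$ uniformly in $k$.

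\textbf{Step 2: isolate the part of $u_k$ not captured by $u_0$.} Set $w_k=u_k-u_0$, so that $w_k\rightharpoonup 0$ in $W_0^{1,N}(\mathbb{B})$ and, by Rellich, $u_k\to u_0$ in $L^q$ for every $q$. We use a Brezis--Lieb type splitting
\begin{flalign*}
\limsup_k \|\nabla w_k\|_{L^N}^N \le 1-\|\nabla u_0\|_{L^N}^N .
\end{flalign*}
For $N=2$ this is exact by the Hilbert structure. For general $N$ we cannot argue directly, and there are two options. The first is to pass to a subsequence with $\nabla u_k\to\nabla u_0$ a.e. and apply Brezis--Lieb; obtaining this a.e. convergence is not automatic from weak convergence, so this route needs justification. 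The second, which we prefer, is the standard device of Lions and of \cite{9}: truncate $u_k$ at a level $L$. With $T_L(s)=\min(|s|,L)\operatorname{sgn}s$, write $u_k=T_L u_k+(u_k-T_Lu_k)$, so that
\begin{flalign*}
\int|\nabla u_k|^N=\int|\nabla T_Lu_k|^N+\int|\nabla(u_k-T_Lu_k)|^N .
\end{flalign*}
Weak lower semicontinuity applied to $T_Lu_k\rightharpoonup T_Lu_0$ then gives, for the high part,
\begin{flalign*}
\limsup_k\|\nabla (u_k-T_Lu_k)\|_N^N\le 1-\|\nabla T_Lu_0\|_N^N,
\end{flalign*}
and the right-hand side tends to $1-\|\nabla u_0\|_N^N$ as $L\to\infty$.

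\textbf{Step 3: Moser--Trudinger for the high part.} On the set $\{|u_k|>L\}$ we have $|u_k|=L+v_k$ with $v_k:=|u_k|-T_L|u_k|\ge0$ in $W_0^{1,N}$. Elementary convexity gives
\begin{flalign*}
(L+v)^{N/(N-1)}\le (1+\delta)v^{N/(N-1)}+C_\delta L^{N/(N-1)}.
\end{flalign*}
Hence $\int_{\{|u_k|>L\}} e^{\alpha_Np|u_k|^{N/(N-1)}}\le C\int_{\mathbb{B}} e^{\alpha_Np(1+\delta)v_k^{N/(N-1)}}$. Write $v_k=\|\nabla v_k\|_N\,\tilde v_k$. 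The exponent is then bounded by
\begin{flalign*}
\alpha_N\, p(1+\delta)\,\|\nabla v_k\|_N^{N/(N-1)}\,\tilde v_k^{N/(N-1)} .
\end{flalign*}
Since $p<p(u_0)$, we can choose $L$ large and $\delta$ small so that $p(1+\delta)\big(1-\|\nabla T_Lu_0\|_N^N+\delta\big)^{1/(N-1)}\le1$ for all large $k$. The classical Moser--Trudinger inequality then bounds this term uniformly.

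\textbf{Step 4: the low part and conclusion.} On $\{|u_k|\le L\}$ the integrand is at most $e^{\alpha_NpL^{N/(N-1)}}$, which integrates to a bounded quantity over $\mathbb{B}$. Finitely many $k$ are trivially bounded since each $u_k$ lies in every Orlicz class. Combining the two regions gives the uniform bound.

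The main obstacle is Step 2, specifically the energy splitting for $N>2$. The truncation decomposition avoids any need for a.e. convergence of gradients, but it requires care that the constants $L,\delta$ are chosen in the right order: first $L$ from $u_0$, then $\delta$, and only then $k$ large.
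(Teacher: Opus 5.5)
Your proof is correct, and it takes a different and shorter route than the paper's.

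\textbf{The paper's route.} It first treats $\|\nabla u_0\|_{L^N}=1$ separately, by strong convergence plus domination. For $0<\|\nabla u_0\|_{L^N}<1$ it argues by contradiction through Schwarz symmetrization:
\begin{itemize}
\item the radial Lemma 2.1 disposes of $\mathbb{B}\setminus\mathbb{B}_R$, and one shifts to $v_k=u_k^*-u_k^*(R)$ on $\mathbb{B}_R$;
\item the pointwise bound of Lemma 2.2 shows that unboundedness forces $v_k^*(t_k)\ge (p_2\alpha_N)^{-\frac{N-1}{N}}\ln^{\frac{N-1}{N}}(|\mathbb{B}_R|/t_k)$ along some $t_k\to 0$;
\item a H\"older estimate on the radial profile, between $t_k$ and the radius where $v_k^*=L$, gives a \emph{lower} bound $p_3^{-(N-1)}$ for the energy of $u_k$ above level $L$;
\item this contradicts the \emph{upper} bound coming from weak lower semicontinuity of the energy of the truncation at level $L$.
\end{itemize}

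\textbf{What you do differently.} You use the same two ingredients: the truncation/lower-semicontinuity energy bound, and the inequality $(L+v)^{\frac{N}{N-1}}\le(1+\delta)v^{\frac{N}{N-1}}+C_\delta L^{\frac{N}{N-1}}$. But you feed the resulting upper bound on $\|\nabla(|u_k|-L)^+\|_{L^N}$ directly into the classical Moser--Trudinger inequality. This buys several things:
\begin{itemize}
\item rearrangement, P\'olya--Szeg\H{o}, Lemmas 2.1--2.2 and the contradiction argument all disappear;
\item $\|\nabla u_0\|_{L^N}=1$, and even $u_0=0$, are covered without a separate case;
\item the argument works verbatim on any bounded domain.
\end{itemize}
The paper's route mainly buys continuity with the whole-space argument of \cite{9}, where the radial lemma is what controls the tail. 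It also gives information on the logarithmic profile that a hypothetical blowing-up sequence must have near the origin. Neither is needed for the statement on $\mathbb{B}$.

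\textbf{Two small points.} The $w_k=u_k-u_0$ / Brezis--Lieb discussion in your Step 2 is never used and can be dropped. You should also say why the whole sequence of truncations converges weakly to $T_Lu_0$: it is bounded in $W_0^{1,N}(\mathbb{B})$, and it converges to $T_Lu_0$ in $L^N$ because $T_L$ is $1$-Lipschitz and $u_k\to u_0$ in $L^N$ by Rellich.
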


\begin{proof}
	\textbf{Case 1.} In this case, from $u_k \rightharpoonup u_0$ in $W_0^{1,N} \left( \mathbb{B} \right)$ and the uniformly convex
	Banach space, we have that $u_k \rightarrow u_0$ in $W_0^{1,N} \left( \mathbb{B} \right)$. From Theorem 49 in \cite{1}, up to a subsequence, we have $|u_k (x)| \leq v(x)$ for almost every $x \in \mathbb{B}$ and some $v(x) \in W_0^{1,N} \left( \mathbb{B} \right)$. Hence, the
	proof follows from the Trudinger–Moser inequality and the Lebesgue dominated convergence
	theorem.
	
	\textbf{Case 2.} If $ 0 < ||\nabla u_0||_{L^N} <1$. Suppose, for contradiction, that for some $0 < p < \frac{1}{ \left( 1 - ||\nabla u_0||_{L^N}^N\right)^{ \frac{1}{N-1} } }  $ we have
	\begin{flalign*}
		\underset{k}{\sup} \int_{ \mathbb{B} }  \Phi \left(\alpha_N p |u_k|^{ \frac{N}{N-1} }  \right) dx = + \infty,
	\end{flalign*}
    which implies that  
    \begin{flalign*}
    	\underset{k}{\sup} \int_{ \mathbb{B} }  \Phi \left(\alpha_N p |u_k^*|^{ \frac{N}{N-1} }  \right) dx = + \infty,
    \end{flalign*}
    where $u_k^*$ is the Schwarz symmetrization of $u_k$. Fixed $0 < R < 1$, we let that
    \begin{flalign*}
    	\int_{\mathbb{B}} \Phi \left(\alpha_N p |u_k^*|^{ \frac{N}{N-1} }  \right) dx = \int_{\mathbb{B}_R} \Phi \left(\alpha_N p |u_k^*|^{ \frac{N}{N-1} }  \right) dx + \int_{\mathbb{B} \setminus \mathbb{B}_R } \Phi \left(\alpha_N p |u_k^*|^{ \frac{N}{N-1} }  \right) dx.
    \end{flalign*}
    From $W_0^{1,N} \left( \mathbb{B} \right) \hookrightarrow L^s \left( \mathbb{B} \right)$ and $s \geq 1$, we have that
    \begin{flalign*}
    	\int_{ \mathbb{B} } \sum_{n=0}^{N-2} |u_k^*|^{ \frac{n N}{N-1} } dx < \infty.
    \end{flalign*}
    From this estimate and Lemma \ref{lem: 2.1}, we get that
    \begin{flalign*}
    	\underset{k}{\sup} \int_{\mathbb{B}_R} \Phi \left(\alpha_N p |u_k^*|^{ \frac{N}{N-1} }  \right) dx = + \infty.
    \end{flalign*}
    From classical estimates, we have two inequalities as follows,
    \begin{flalign*}
    	\left( s+ t \right)^{ \frac{N}{N-1} } \leq s^{ \frac{N}{N-1} } + A s^{ \frac{1}{N-1} } t + t^{ \frac{N}{N-1} },
    \end{flalign*}
    for some constants $A=A(N) >0$; and all $s$, $t \geq 0$, 
    \begin{flalign*}
    	s^{q} t^{ q^{\prime} } \leq \epsilon s + \epsilon^{ \frac{-q}{ q^{\prime} } } t,
    \end{flalign*} 
    for $\epsilon >0$, $q+ q^{\prime} =1$ and $q$, $q^{\prime} > 0$.
    
    Let $\{v_k \}_k \subset W_0^{1,N} \left( \mathbb{B}_R \right)$ be such that $ v_k (x) = u^*_k(x) - u^*_k (R)$. Then we have 
    \begin{flalign*}
    	|u^*_k (x)|^{ \frac{N}{N-1} } & \leq \left( |v_k (x)| + |u^*_k(R)| \right)^{ \frac{N}{N-1} }\\ 
    	& \leq |v_k (x)|^{ \frac{N}{N-1} } + A |v_k (x)|^{ \frac{1}{N-1} } |u^*_k(R)| + |u^*_k(R) |^{ \frac{N}{N-1} },
    \end{flalign*}
    \begin{flalign*}
     	|v_k (x)|^{ \frac{1}{N-1} } |u^*_k(R)| & = \left( |v_k (x)|^{ \frac{N}{N-1} } \right)^{ \frac{1}{N} } \left( |u^*_k (R) |^{ \frac{N}{N-1} } \right)^{ \frac{N-1}{N} } \\
     	&\leq \frac{\epsilon}{A} | v_k (x)|^{ \frac{N}{N-1} } + \left( \frac{\epsilon}{A} \right)^{ - \frac{1}{N-1} } |u^*_k (R)|^{ \frac{N}{N-1} }.
    \end{flalign*}
    In addition, we have
    \begin{flalign*}
    	|u^*_k(x)|^{ \frac{N}{N-1} } \leq \left( 1+ \epsilon \right) | v_k^* (x)|^{ \frac{N}{N-1} } + C (N, \epsilon) |u^*_k(R)|^{ \frac{N}{N-1} }.
    \end{flalign*} 
    We can find $\epsilon >0$ and $p_1 >0$ such that
    $(1 + \epsilon) p_1 p < p \left(u_0 \right)$, with H\"{o}lder inequality, 
    \begin{flalign*}
    	& \int_{\mathbb{B}_R} e^{ p \alpha_N |u^*_k(x)|^{ \frac{N}{N-1} } } dx \\
    	& \leq \left( \int_{\mathbb{B}_R} e^{ \left( 1+ \epsilon \right) p_1 p \alpha_N |v^*_k(x)|^{ \frac{N}{N-1} } } dx \right)^{ \frac{1}{p_1} } \left( \int_{\mathbb{B}_R} e^{ p_1^{\prime} p \alpha_N C (N, \epsilon ) |u^*_k(R)|^{ \frac{N}{N-1} } } dx \right)^{ \frac{1}{ p_1^{\prime} } },
    \end{flalign*}
    where $\frac{1}{p_1}+ \frac{1}{p_1^{\prime}} =1$.
    From the theory of rearrangement functions, we have that the second integral on the right hand side is ﬁnite; i.e., 
    \begin{flalign*}
    	\int_{ \mathbb{B}_R } e^{p_1^{\prime} p \alpha_N C \left( N, \epsilon \right) |u_k^* \left( R \right)|^{ \frac{N}{N-1} } } dx < \infty.
    \end{flalign*}
    Thus, we may assume that
    \begin{flalign*}
    	\underset{k}{\sup} \int_{\mathbb{B}_R} e^{ \left( 1+ \epsilon \right) p_1 p \alpha_N |v^*_k(x)|^{ \frac{N}{N-1} } } dx = + \infty,
    \end{flalign*}
    where $v^*_k$ is the decreasing rearrangement of $v_k$. Since the defination of $v_k$ and the
    P\'{o}lya–Szeg\"{o} inequality, we have
    \begin{flalign*}
    	|| \nabla v_k^*||_{L^N} \leq ||\nabla v_k||_{L^N} = || \nabla u_k^*||_{L^N} \leq ||\nabla u_k||_{L^N} \leq 1.
    \end{flalign*}
    From Lemma \ref{lem: 2.2}, we have that
    \begin{flalign}
    	v^* (t) \leq \left\lbrace \frac{1}{N \omega_{N-1}^{ \frac{1}{N-1} } }\ln \left( \frac{| \mathbb{B}_R| }{t} \right) \right\rbrace^{ \frac{N-1}{N} }, \label{eq: 3.1}
    \end{flalign}
    for all $ t \in \left(0, |\mathbb{B}_R| \right)$. We now only need to prove that for each $p_2 \in (p_1 p, p \left(u_0 \right))$, $t_0 \in (0, |\mathbb{B}_R|)$ and each $k \in \mathbb{N}$, there exists $k>k_0$ and $t\in(0, t_0)$ such that
    \begin{flalign*}
    	v^* (t) \geq \left( \frac{1}{p_2 \alpha_N}  \right)^{ \frac{N-1}{N} } \ln^{ \frac{N-1}{N} } \left( \frac{|\mathbb{B}_R|}{t} \right).
    \end{flalign*}
    Indeed, suppose by contradiction that there exist $k_0 \in \mathbb{N}$ and $t_0 \in (0, |\mathbb{B}_R|)$ such that, for some $p_2 \in (p_1 p, p \left(u_0 \right) )$,
    \begin{flalign*}
    	v^* (t) < \left( \frac{1}{p_2 \alpha_N}  \right)^{ \frac{N-1}{N} } \ln^{ \frac{N-1}{N} } \left( \frac{|\mathbb{B}_R|}{t} \right)
    \end{flalign*}
    $k>k_0$ and $t \in(0, t_0)$. Then together with inequality \ref{eq: 3.1}, implies
    \begin{flalign*}
    	\int_0^{|\mathbb{B}_R|} e^{ \left( 1+ \epsilon \right) p_1 p \alpha_N |v^*_k(t)|^{ \frac{N}{N-1} } } dt & \leq \int_0^{t_0} \left( \frac{| \mathbb{B}_R| }{t} \right)^{ \frac{ (1 + \epsilon) p_1 p }{p_2} } dt \\
    	& + \int_{t_0}^{|\mathbb{B}_R|} \left( \frac{| \mathbb{B}_R |}{ t_0 } \right)^{ (1 + \epsilon) p_1 p } dt < \infty,
    \end{flalign*}
    which contradicts and our claim is proved. While, we can consider sequence $\{ t_k\}_k \subset (0, t_0)$ and $t_k \leq \frac{1}{k}$ for all $k \in \mathbb{N}$ and 
    \begin{flalign*}
    	v^*_k (t_k) \geq \left( \frac{1}{p_2 \alpha_N}  \right)^{ \frac{N-1}{N} } \ln^{ \frac{N-1}{N} } \left( \frac{|\mathbb{B}_R|}{t_k} \right).
    \end{flalign*}
    For $L >0$ and $ v \geq 0$, 
    \begin{flalign*}
    	& T^L (v) := \min \{ v, ~ L \},\\
    	& T_L (v) := v - T^L (v).
    \end{flalign*}
    Obviously, $|| T^L (v)|| \rightarrow ||v||$ as $L \rightarrow + 
    \infty$. Then we can find constants $p_3 \in (p_2, p \left(u_0 \right))$ and $L$ large enough such that
    \begin{flalign*}
    	\frac{1 - ||u_0||^N}{1-||T^L (u_0)||^N} > \left( \frac{p_3}{p \left(u_0 \right)} \right)^{ N-1 }. 
    \end{flalign*}
    Then we have that $v_k^* (t_k) \rightarrow + \infty$ as $ k \rightarrow \infty$. Also, there exists constants $r_k \in (t_k, |\mathbb{B}_R| )$ such that $v_k^* (r_k) = L$ for each $k \in \mathbb{N}$. Thus, we have 
    \begin{flalign*}
    	\left( \frac{1}{p_2 \alpha_N}  \right)^{ \frac{N-1}{N} } \ln^{ \frac{N-1}{N} } \left( \frac{|\mathbb{B}_R|}{t_k} \right) - L \leq v_k^{*} ( t_k ) - v_k^{*} (r_k) = \int_{t_k}^{r_k} - \frac{d v_k^*}{dt} dt.
    \end{flalign*}
    With the H\"{o}lder inequality,
    \begin{flalign*}
    	\int_{t_k}^{r_k} - \frac{d v_k^*}{dt} dt  \leq & || - \frac{d v_k^*}{dt} \left( N^{ \frac{N-1}{N}  } \omega_{N-1}^{ \frac{1}{N} } \right) t^{ \frac{N-1}{N} } ||_{L^N (t_k, r_k) } \\
    	& \times \frac{ N^{ \frac{1- N}{N} } }{ \omega_{N-1}^{ \frac{1}{N} } } \ln^{ \frac{N-1}{N} } \left( \frac{|\mathbb{B}_R|}{t_k} \right).
    \end{flalign*}
    From $\frac{|\mathbb{B}_R|}{t_k} \rightarrow + \infty$ as $k \rightarrow \infty$. We have that
    \begin{flalign*}
    	\left( \frac{1}{p_3} \right)^{ \frac{N-1}{N} } \leq || - \frac{d v_k^*}{dt} \left( N^{ \frac{N-1}{N}  } \omega_{N-1}^{ \frac{1}{N} } \right) t^{ \frac{N-1}{N} } ||_{L^N (0, r_k) } + o_k(1).
    \end{flalign*}
    By the deﬁnitions of $T^L$ and $T_L$, one can see that
    \begin{flalign*}
    	\int_{\mathbb{B}_R} |\nabla T^L(v_k)|^N dx + \int_{\mathbb{B}_R} |\nabla T_L (v_k)|^N dx & = \int_{\mathbb{B}_R} |\nabla v_k|^N dx = \int_{\mathbb{B}_R} |\nabla u_k^*|^N dx\\
    	& = \int_{\mathbb{B}_R} |\nabla T^L(u^*_k)|^N dx + \int_{\mathbb{B}_R} |\nabla T_L (u^*_k)|^N dx.
    \end{flalign*}
    From 
    \begin{flalign*}
    	\int_{\mathbb{B}_R} |\nabla T^L(v_k)|^N dx \geq \int_{\mathbb{B}_R} |\nabla T^L(u^*_k)|^N dx,
    \end{flalign*}
    we have
    \begin{flalign} \label{eq: 2.2}
    	\int_{\mathbb{B}_R} |\nabla T_L(v_k)|^N dx \leq \int_{\mathbb{B}_R} |\nabla T_L(u^*_k)|^N dx.
    \end{flalign}
    From the inequality \ref{eq: 2.2} and the P\'{o}lya–Szeg\"{o} inequality, we observe that
    \begin{flalign*}
    	\int_{\mathbb{B}} |\nabla T_L (u_k) |^N dx & \geq \int_{\mathbb{B}} |\nabla (T_L (u_k))^* |^N dx\\
    	& = \int_{\mathbb{B}} |\nabla T_L (u_k^*) |^N dx\\
    	& \geq \int_{\mathbb{B}} |\nabla T_L (v_k) |^N dx\\
    	& = || - \frac{d v_k^*}{dt} \left( N^{ \frac{N-1}{N}  } \omega_{N-1}^{ \frac{1}{N} } \right) t^{ \frac{N-1}{N} } ||_{L^N (0, r_k) }^N.
    \end{flalign*}
    Then we have that
    \begin{flalign*}
    	\left( \frac{1}{p_3} \right)^{N-1}\leq \int_{\mathbb{B}} |\nabla T_L( u_k )|^{ N } dx + o_k (1) .
    \end{flalign*}
    With $u_k = T^L (u_k) + T_L (u_k)$ and $T^L (u_k) \leq u_k$, we have that
    \begin{flalign*}
    	1 = ||\nabla u_k||_{L^N}^N = ||\nabla T^L (u_k)||_{L^N}^N + ||\nabla T_L (u_k)||_{L^N}^N.
    \end{flalign*}
    Then
    \begin{flalign*}
    	||\nabla T^L(u_k)||_{L^N}^N + \left( \frac{1}{p_3} \right)^{N-1} + o_k(1) \leq 1.
    \end{flalign*}
    For each fixed $L>0$, the sequence $\{ T^L (u_k) \}_k \subset W_0^{1,N} \left( \mathbb{B} \right)$ is bounded. Then we assume that
    \begin{flalign*}
    	& T^L \left(u_k \right) \rightharpoonup T^L \left(u_0 \right),~ \text{in} ~ W_0^{1,N} \left( \mathbb{B} \right),\\
    	& T^L \left(u_k \right) \rightarrow T^L \left(u_0 \right),~ \text{a.e} ~ x \in \mathbb{B} .
    \end{flalign*}
    By the lower semicontinuity
    of the norm and the previous inequality we obtain
    \begin{flalign*}
    	p_3 & \geq \frac{1}{\left( 1- \underset{k}{\lim \inf} ||\nabla T^L (u_k)||^N_{L^N} \right)^{ \frac{1}{N-1} } }\\
    	& \geq \frac{1}{\left( 1-  ||\nabla T^L (u_k)||^N_{L^N} \right)^{ \frac{1}{N-1} } }.
    \end{flalign*}
    Moreover,
    \begin{flalign*}
    	p_3 & \geq \frac{1}{\left( 1-  ||\nabla T^L (u_k)||^N_{L^N} \right)^{ \frac{1}{N-1} } }\\
    	& > \frac{p_3}{p \left(u_0 \right) } \frac{1}{\left( 1-  ||\nabla u_k||^N_{L^N} \right)^{ \frac{1}{N-1} } } \\
    	& = p_3,
    \end{flalign*}
    which is a contradiction.
    
    Next, we show that $p \left(u_0 \right)$ is sharp, i.e., there exist function sequences $\{ u_k\}_k \subset W_0^{1,N} \left( \mathbb{B} \right)$ and a fucntion $u_0 \in W_0^{1,N} \left( \mathbb{B} \right)$ such that 
    \begin{flalign*}
    	& ||\nabla u_k||_{L^N} =1,\\
    	& u_k \rightharpoonup u_0 \neq 0,\\
    	& ||\nabla u_0||_{L^N}= \delta <1,\\
    	& \underset{k}{\lim \inf} \int_{\mathbb{B}} \Phi \left( \alpha_N p \left(u_0 \right) |u_k|^{ \frac{N}{N-1} } \right) dx = + \infty. 
    \end{flalign*}
    For a fixed $r>0$, we defined a sequence of functions as follow,
    \begin{flalign*}
    	m_k (x) =
    	\begin{cases}
    		N^{ \frac{1-N}{N} } \omega_{N-1}^{ - \frac{1}{N} } k^{ \frac{N-1}{N} }, ~ \text{if} ~ |x| \in [0, r e^{ -\frac{k}{N} } ], \\
    		N^{ \frac{1}{N} } \omega_{N-1}^{ - \frac{1}{N} } \ln \left( \frac{r}{|x|} \right) k^{ - \frac{1}{N} }, ~ \text{if} ~ |x| \in [ r e^{ -\frac{k}{N} }, r ],\\
    		0,  ~ \text{if} ~ |x| \in [ r ,1].
    	\end{cases}
    \end{flalign*}
    It is obvious that $m_k(x) \in W_0^{1,N} \left( \mathbb{B} \right)$, $||\nabla m_k(x)||_{L^N} =1$ and $m_k \rightharpoonup 0$ in $W_0^{1,N} \left( \mathbb{B} \right)$. Also, we have that $||\nabla m_k||_{L^p} \rightarrow 0$ as $k \rightarrow \infty$ for all $p \geq 1$. On the other hand, we define a function $u$ as follow,
    \begin{flalign*}
    	u(x)=
    	\begin{cases}
    		A, ~ \text{if} ~ |x| \in [0, \frac{2R}{3}],\\
    		3A - \frac{3A}{R} |x|, ~ \text{if} ~ |x| \in [\frac{2R}{3}, R],\\
    		0, ~ \text{if} ~ |x| \in [R,1],
    	\end{cases}
    \end{flalign*}
    for $R:=3r<1$ and $A>0$. We see that $u \in W_0^{1,N} \left( \mathbb{B} \right)$ and the constant $A$ are chosen in such a way that $|\left|\nabla u|\right|_{L^N} = \delta <1$. Deﬁning
    \begin{flalign*}
    	v_k = u + \left( 1- \delta^N \right)^{ \frac{1}{N} } m_k,
    \end{flalign*}
    and using that $\nabla u$ and $\nabla m_k$ have disjoint supports, it follows that 
    \begin{flalign*}
    	|\left|\nabla v_k|\right|_{L^N}^N = |\left|\nabla u|\right|_{L^N}^N + \left( 1- \delta^N \right).
    \end{flalign*}
    Setting $u_k =\frac{v_k}{ |\left|\nabla v_k|\right|_{L^N} }$, then
    \begin{flalign*}
    	& |\left| \nabla u_k|\right|_{L^N} = 1,\\
    	& u_k \rightharpoonup u_0 ~\text{in} ~ W_0^{1,N} \left( \mathbb{B} \right),\\
    	& |\left|\nabla u_0|\right|_{L^N} = \delta.
    \end{flalign*}
    Thus 
    \begin{flalign*}
    	\int_{\mathbb{B} } \Phi \left( \alpha_N p \left(u_0 \right) |u_k |^{ \frac{N}{N-1} } \right) dx & \geq \int_{\mathbb{B}_{r e^{ -\frac{k}{N} }}  } \exp \left\lbrace \alpha_N \frac{|u_k|^{ \frac{N}{N-1} } }{ \left( 1- \delta^N \right)^{ \frac{1}{N-1} } } \right\rbrace dx + C\left(u_0 \right)\\
    	& = \int_{\mathbb{B}_{r e^{ -\frac{k}{N} }}  } \exp \left\lbrace \alpha_N \frac{\left( A + \left( 1 - \delta^N\right)^{ \frac{1}{N} } m_k  \right)^{ \frac{N}{N-1} } }{ \left( 1- \delta^N \right)^{ \frac{1}{N-1} } } \right\rbrace dx + C \left(u_0 \right)\\
    	& = \int_{\mathbb{B}_{r e^{ -\frac{k}{N} }}  } \exp \left\lbrace \alpha_N \left( C_1 + m_k \right)^{ \frac{N}{N-1} } \right\rbrace dx + C \left(u_0 \right)\\
    	& \geq C_2 e^{-\frac{k}{N}} \exp \left\lbrace \alpha_N \left( C+ k^{ \frac{N-1}{N} } \right)^{ \frac{N}{N-1} } \right\rbrace + C \left(u_0 \right) \rightarrow + \infty,
    \end{flalign*}
    for some positive constants $C$, $C_1$, $C_2$ and this concludes the proof of this lemma.
\end{proof}

\section{Proof of Theorem 1.1}
We divide the proof into several steps. Firstly, we show that for any $\alpha>0$ and $u \in W_0^{1,N} \left( \mathbb{B} \right) \cap \mathcal{S} \setminus \{0\}$, the integral as follows is finite. 
\begin{flalign*}
	\int_{\mathbb{B}} |x|^{N \epsilon} \Phi \left( \alpha |u|^{ \frac{N}{N-1} } \right) dx.
\end{flalign*}
Then, we show that, for any $\alpha \leq \alpha_N \left( 1 + \epsilon \right)$, the following holds
\begin{flalign*}
	\underset{u \in W_0^{1,N  }\left( \mathbb{B} \right) \cap \mathcal{S} \setminus \{0\}}{\sup} \int_{\mathbb{B}} |x|^{N \epsilon} \Phi \left( \alpha |u|^{ \frac{N}{N-1} } \right)dx < + \infty.
\end{flalign*} 
In addition, we find a function $u_0 \in W_0^{1,N} \left( \mathbb{B} \right) \cap \mathcal{S} \setminus \{0\}$ satisfying $|| \nabla u_0||_{L^N} =1$ that attains the supremum of the inequality to complete the proof.

\textbf{Step 1}. With H\"{o}lder inequality and Lemma \ref{lem: 2.3}, we have
\begin{flalign*}
	\int_{\mathbb{B}} |x|^{N \epsilon} \Phi \left( \alpha |u|^{ \frac{N}{N-1} } \right) dx & \leq \left( \int_{\mathbb{B}} |x|^{p N \epsilon} dx \right)^{ \frac{1}{p} }  \left( \int_{\mathbb{B}} \left( \Phi \left(\alpha |u|^{ \frac{N}{N-1} } \right) \right)^{p^{\prime}} dx \right)^{ \frac{1}{p^{\prime}} } \\
	& \leq \left( \int_{\mathbb{B}} |x|^{p N \epsilon} dx \right)^{ \frac{1}{p} }  \left( \int_{\mathbb{B}}  \Phi \left(\alpha p^{\prime} |u|^{ \frac{N}{N-1} } \right) dx \right)^{ \frac{1}{p^{\prime}} },
\end{flalign*}
where $\frac{1}{p}+ \frac{1}{ p^{\prime} }=1$. For any fixed $\alpha < \alpha_N \left( 1+ \epsilon \right)$, there exists $p>0$ such that $\alpha p^{\prime} \leq \alpha_N \left( 1+ \epsilon \right)$. Then the two parts of the right-hand side are finite. The case of $\alpha = \alpha_N \left( 1+ \epsilon \right)$ is established by the concentration-compactness principle. Thanks to the Sobolev imbedding Theorem, the step is complete.

\textbf{Step 2}. 
We consider the maximizing sequence of functions. Let $\{ R_k\}_k$ be a sequence such that $R_k \nearrow 1$ and $\beta_k \nearrow \alpha_N \left( 1+ \epsilon \right)$, then there exists a function $u_k$ with $||\nabla u_k||_{L^N} =1$,
\begin{flalign*}
	\int_{\mathbb{B}_{R_k} } |x|^{N \epsilon} \Phi \left( \beta_k |u_k|^{ \frac{N}{N-1} } \right) dx = \underset{u \in W_0^{1,N} \left( \mathbb{B}_{R_k}\right) \cap \mathcal{S} \setminus \{0\}}{\sup} |x|^{N \epsilon} \Phi \left( \beta_k |u|^{ \frac{N}{N-1} } \right) dx.
\end{flalign*}
From
\begin{flalign*}
	v(r) = \left( 1 + \epsilon \right)^{ 1- \frac{1}{N}} u\left( r^{ \frac{1}{1 + \epsilon} } \right),
\end{flalign*}
then we have
\begin{flalign*}
	\int_{\mathbb{B}_{R_k}} |\nabla u|^N dx = \int_{\mathbb{B}_{R_k}} |\nabla v|^Ndx.
\end{flalign*}
By direct calculation, we obtain
\begin{flalign*}
    \int_{\mathbb{B}_{R_k}} |x|^{N \epsilon} \Phi \left( \beta_k |u_k|^{ \frac{N}{N-1} } \right) dx & = \int_{\mathbb{B}_{R_k}} |x|^{N \epsilon} \left( e^{ \beta_k |u_k|^{ \frac{N}{n-1} } } - \sum_{n=0}^{N-2} \frac{ \beta_k^n |u_k|^{n \frac{N}{N-1} }}{n !} \right) dx\\
	& = \frac{1}{1+ \epsilon} \int_{\mathbb{B}_{R_k} } \Phi \left( \frac{\beta_k}{1+ \epsilon} |v_k|^{ \frac{N}{N-1} }\right) dx.
\end{flalign*}
Then we have that
\begin{flalign*}
	& \underset{u \in W_0^{1,N} \left( \mathbb{B}_{R_k} \right) \cap \mathcal{S} \setminus \{0\}, || \nabla u||_{L^N} =1}{\sup} \int_{\mathbb{B}_{R_k}} |x|^{N \epsilon} \Phi \left( \beta_k |u|^{ \frac{N}{N-1} } \right) dx\\
	 & = \underset{u \in W_0^{1,N} \left( \mathbb{B}_{R_k} \right)  \setminus \{0\}, || \nabla u||_{L^N} =1}{\sup} \frac{1}{1+\epsilon}\int_{\mathbb{B}_{R_k}} \Phi \left( \frac{\beta_k}{1 + \epsilon} |u|^{ \frac{N}{N-1} } \right) dx.
\end{flalign*}
Then the Theorem 1.1 is complete.

\section{Proof of Theorem 1.2}
Denoting $c_{\epsilon}:= \underset{ \mathbb{B}}{\max} u_{\epsilon} = u_{\epsilon} (0)$, we assume that $c_{\epsilon} \rightarrow + \infty$ as $\epsilon \rightarrow 0$. 
\begin{lem}
	\begin{flalign*}
		\underset{\epsilon \rightarrow 0}{\lim \inf} \lambda_{\epsilon}^{\phi} >0.
	\end{flalign*}
\end{lem}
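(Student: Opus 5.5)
We argue by contradiction, following the standard blow-up scheme. Suppose that along a subsequence $\lambda^{\phi}_{\epsilon}\to 0$ as $\epsilon\to 0$. The idea is to show that this would force the maximal value
\begin{flalign*}
	\Lambda_{\epsilon}:=\int_{\mathbb{B}}|x|^{N\epsilon}\Phi\left(\alpha_N(1+\epsilon)|u_{\epsilon}|^{\frac{N}{N-1}}\right)dx
\end{flalign*}
to be too small, contradicting a lower bound for $\Lambda_{\epsilon}$.

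\textbf{Step 1 (uniform lower bound on the supremum).} The change of variables in Step 2 of the proof of Theorem 1.1 identifies $\Lambda_{\epsilon}$ with $\frac{1}{1+\epsilon}\sup\int_{\mathbb{B}}\Phi(\alpha_N|v|^{\frac{N}{N-1}})dx$ up to $o(1)$. Hence $\liminf_{\epsilon\to0}\Lambda_{\epsilon}\geq \Lambda_0:=\sup_{\|\nabla u\|_{L^N}\le 1}\int_{\mathbb{B}}\Phi(\alpha_N|u|^{\frac{N}{N-1}})dx$. The important point is that $\Lambda_0$ is strictly larger than the contribution of the polynomial part. Concretely, I will test with $tw$ for a fixed radial $w$ with $\|\nabla w\|_{L^N}=1$. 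Since $\Phi(s)=\frac{s^{N-1}}{(N-1)!}+\sum_{j\ge N}\frac{s^j}{j!}$, this gives $\Lambda_0>\frac{\alpha_N^{N-1}}{(N-1)!}\int_{\mathbb{B}}|w|^N dx$ strictly.

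\textbf{Step 2 (splitting $\Phi$ and using $\lambda^{\phi}_\epsilon\to0$).} Write $\beta=\alpha_N(1+\epsilon)$ and use the elementary inequality $\Phi(s)\le \frac{s^{N-1}}{(N-1)!}+ s\,\Phi'(s)$, which holds since $\Phi'(s)=e^s-\sum_{j=0}^{N-3}\frac{s^j}{j!}$. Termwise one has $\frac{s^j}{j!}\le s\cdot\frac{s^{j-1}}{(j-1)!}$ for $j\ge N$. This gives
\begin{flalign*}
	\Lambda_{\epsilon}\le \frac{\beta^{N-1}}{(N-1)!}\int_{\mathbb{B}}|x|^{N\epsilon}|u_\epsilon|^N dx+\beta\,\lambda^{\phi}_{\epsilon}.
\end{flalign*}
If $\lambda^{\phi}_\epsilon\to0$, then the second term vanishes. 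Here I may need to choose the splitting level carefully, as in Lemma~4.1-type arguments: on $\{|u_\epsilon|\le M\}$ the integrand is controlled, and on $\{|u_\epsilon|>M\}$ one has $\Phi\le \frac{1}{M^{N/(N-1)}}|u|^{\frac{N}{N-1}}\Phi'$ up to constants.

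\textbf{Step 3 (contradiction).} Since $\|\nabla u_\epsilon\|_{L^N}\le1$, up to a subsequence $u_\epsilon\rightharpoonup u_0$ weakly in $W_0^{1,N}(\mathbb{B})$ and $u_\epsilon\to u_0$ in $L^N$. Then
\begin{flalign*}
	\limsup\Lambda_\epsilon\le\frac{\alpha_N^{N-1}}{(N-1)!}\int_{\mathbb{B}}|u_0|^Ndx.
\end{flalign*}
Combined with Step 1 this gives
\begin{flalign*}
	\Lambda_0\le\frac{\alpha_N^{N-1}}{(N-1)!}\int|u_0|^N\le \frac{\alpha_N^{N-1}}{(N-1)!}\int\Big|\frac{u_0}{\|\nabla u_0\|_{L^N}}\Big|^N<\Lambda_0,
\end{flalign*}
where the last step is the strict inequality of Step 1 applied with $w=u_0/\|\nabla u_0\|_{L^N}$. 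If $u_0=0$, the bound directly gives $\Lambda_0\le0$, which is absurd.

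\textbf{Main obstacle.} The delicate point is the strict inequality in Step 1 together with the uniform-in-$\epsilon$ comparison $\liminf\Lambda_\epsilon\ge\Lambda_0$. For the latter I would use the rescaling $v(r)=(1+\epsilon)^{1-\frac1N}u(r^{\frac{1}{1+\epsilon}})$ from Section 3, applied to a near-maximizer of $\Lambda_0$, and check that the factor $\frac{1}{1+\epsilon}\to1$. The weight $|x|^{N\epsilon}\to1$ boundedly, so the polynomial term converges by dominated convergence.
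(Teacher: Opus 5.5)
Your argument is correct, and it takes a different route from the paper. The paper passes to the weak limit $u$ of $u_\epsilon$ and records a Fatou-type lower bound for $\int_{\mathbb{B}}|x|^{N\epsilon}\Phi'(\alpha_N(1+\epsilon)|u_\epsilon|^{\frac{N}{N-1}})\,dx$ in terms of $u$. It then simply cites \cite{7} for $\inf_{\epsilon}\lambda^{\phi}_{\epsilon}>0$. A weak-limit comparison of this kind bounds $\lambda^{\phi}_{\epsilon}$ from below only when the limit is nonzero. The paper's next lemma shows the limit is $0$ in the blow-up regime, so the paper's conclusion effectively rests on the citation.

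Your proof is self-contained and treats $u_0=0$ and $u_0\neq0$ uniformly:
\begin{itemize}
\item $\Phi(s)\le\frac{s^{N-1}}{(N-1)!}+s\Phi'(s)$ is a global pointwise inequality, so Step 2 follows just by integrating it. No splitting level is needed, so you can drop that hedge.
\item The polynomial term is controlled by the compact embedding $W_0^{1,N}(\mathbb{B})\hookrightarrow L^N(\mathbb{B})$.
\item The strict inequality $\int_{\mathbb{B}}\Phi(\alpha_N|w|^{\frac{N}{N-1}})\,dx>\frac{\alpha_N^{N-1}}{(N-1)!}\int_{\mathbb{B}}|w|^N\,dx$, for the admissible radial $w=u_0/\|\nabla u_0\|_{L^N}$, closes the contradiction.
\item Step 1 is fine too; the substitution of Section 3 in fact gives $\Lambda_\epsilon=\Lambda_0/(1+\epsilon)$ exactly on radial functions.
\end{itemize}

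That said, the whole detour is unnecessary. Since $s\cdot\frac{s^{N-2}}{(N-2)!}\ge\frac{s^{N-1}}{(N-1)!}$, the lowest-order term of $\Phi$ is also absorbed, and in fact $s\Phi'(s)=\sum_{j\ge N-1}\frac{j\,s^j}{j!}\ge(N-1)\Phi(s)$. Integrating with $s=\alpha_N(1+\epsilon)|u_\epsilon|^{\frac{N}{N-1}}$ gives $\lambda^{\phi}_{\epsilon}\ge\frac{N-1}{\alpha_N(1+\epsilon)}\Lambda_\epsilon$. Hence $\liminf_{\epsilon\to0}\lambda^{\phi}_{\epsilon}\ge\frac{N-1}{\alpha_N}\Lambda_0>0$ directly, with no contradiction argument, no weak limit and no strict inequality. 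The ``main obstacle'' you flag therefore disappears.
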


\begin{proof}
	It is not diﬃcult to ﬁnd that $u_{\epsilon}$ satisﬁes the Euler–Lagrange equation
	\begin{equation} 
		\label{eq:4.1}
		\begin{cases}
			 -\operatorname{div} \left( |\nabla u_{\epsilon}|^{N-2} \nabla u_{\epsilon} \right) = \frac{1}{ \lambda_{\epsilon}^{\phi} } |x|^{N \epsilon} |u_{\epsilon}|^{ \frac{1}{N-1} } \Phi^{\prime} \left( \alpha_N \left( 1+ \epsilon \right) |u_{\epsilon}|^{ \frac{N}{N-1}} \right),\\
			 u_{\epsilon} >0, ~ \mathbb{B},\\
			 u_{\epsilon} =0 , ~ \partial \mathbb{B},\\
			 \lambda_{\epsilon}^{\phi} = \int_{\mathbb{B}} |x|^{N \epsilon} |u_{\epsilon}|^{ \frac{N}{N-1} } \Phi^{\prime} \left( \alpha_N \left( 1 + \epsilon \right) |u_{\epsilon}|^{ \frac{N}{N-1} } \right) dx.
		\end{cases}
	\end{equation}
    From the boundedness of $\{u_{\epsilon} \}_{\epsilon}$ in $W_0^{1,N} \left( \mathbb{B} \right)$, we can assume that
    \begin{flalign} \label{eq: 4.2}
    	\begin{split}
    		& u_{\epsilon} \rightharpoonup u, ~ W_0^{1,N} \left( \mathbb{B} \right);\\
    		& u_{\epsilon} \rightarrow u, ~ L^p \left( \mathbb{B} \right), ~ \forall p >1;\\
    		& u_{\epsilon} \rightarrow u, ~ \text{a.e.} ~ x \in \mathbb{B}.\\
    	\end{split}
    \end{flalign}
    By the Lebesgue dominated convergence theorem, we obtain
    \begin{flalign*}
    	\int_{\mathbb{B}} \Phi^{\prime} \left( \alpha_N |u|^{ \frac{N}{N-1} } \right) dx & = \underset{\epsilon \rightarrow 0}{\lim} \int_{\mathbb{B}} |x|^{N \epsilon} \Phi^{\prime} \left( \alpha_N\left( 1+ \epsilon \right) |u|^{ \frac{N}{N-1} } \right) dx\\
    	& \leq \underset{\epsilon \rightarrow 0}{\lim \inf} \int_{\mathbb{B}} |x|^{N \epsilon} \Phi^{\prime} \left( \alpha_N\left( 1+ \epsilon \right) |u_{\epsilon}|^{ \frac{N}{N-1} } \right) dx.
    \end{flalign*}
    We show that $\underset{\epsilon >0}{\inf} \lambda_{\epsilon}^{\phi} >0$; which follows from \cite{7}.
\end{proof}

\begin{lem}
	Let $u$ be the limit of $\{ u_{\epsilon} \}_{\epsilon}$ in \ref{eq:4.1} and \ref{eq: 4.2}. Then $u = 0$ and 
	\begin{flalign*}
		|\nabla u_{\epsilon}|^N \rightharpoonup \delta_0
	\end{flalign*}
    weakly in the sense of measures, where $\delta_0$ stands for the Dirac measure
	centered at the origin.
\end{lem}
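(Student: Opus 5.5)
We argue by contradiction in the standard blow-up fashion, using the concentration--compactness Lemma \ref{lem: 2.4} together with the normalization $\|\nabla u_\epsilon\|_{L^N}=1$ and the blow-up assumption $c_\epsilon=u_\epsilon(0)\to+\infty$.

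\textbf{Step 1: $u=0$.} Suppose $u\neq 0$. Since $\|\nabla u_\epsilon\|_{L^N}=1$ and $u_\epsilon\rightharpoonup u$, we have $0<\|\nabla u\|_{L^N}\le 1$.
\begin{itemize}
\item If $\|\nabla u\|_{L^N}<1$, Lemma \ref{lem: 2.4} gives some $p>1$ with
\begin{flalign*}
\sup_\epsilon \int_{\mathbb{B}} \Phi\bigl(\alpha_N p |u_\epsilon|^{\frac{N}{N-1}}\bigr)\,dx<\infty .
\end{flalign*}
By Lemma \ref{lem: 2.3}, $\Phi'(t)\le e^t$, and $|x|^{N\epsilon}\le 1$, choosing $\epsilon$ small so that $(1+\epsilon)q<p$ for some $q>1$ shows that
\begin{flalign*}
f_\epsilon:=\frac{1}{\lambda^\phi_\epsilon}|x|^{N\epsilon}u_\epsilon^{\frac{1}{N-1}}\Phi'\bigl(\alpha_N(1+\epsilon)u_\epsilon^{\frac{N}{N-1}}\bigr)
\end{flalign*}
is bounded in $L^{q}(\mathbb{B})$ for some $q>1$. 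Here we use the previous lemma, $\liminf\lambda^\phi_\epsilon>0$, and Hölder's inequality to handle the factor $u_\epsilon^{1/(N-1)}$.
\item If $\|\nabla u\|_{L^N}=1$, then $u_\epsilon\to u$ strongly by uniform convexity, as in Case 1 of Lemma \ref{lem: 2.4}. In this case the integrability is again uniform, by the domination argument used there.
\end{itemize}
In either case, the $N$-Laplace equation \eqref{eq:4.1} with right-hand side bounded in $L^q$, $q>1$, gives a uniform $L^\infty$ bound for $u_\epsilon$ via Serrin/Moser iteration (or Talenti-type comparison, since $u_\epsilon$ is radial). This contradicts $c_\epsilon\to\infty$, so $u=0$.

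\textbf{Step 2: $|\nabla u_\epsilon|^N\rightharpoonup\delta_0$.} Up to a subsequence, $|\nabla u_\epsilon|^N dx\rightharpoonup\mu$ with $\mu(\overline{\mathbb{B}})\le 1$. Suppose $\mu\neq\delta_0$.

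Because $u_\epsilon$ is radially decreasing, the radial lemma (Lemma \ref{lem: 2.2}, applied after rescaling to annuli) controls $u_\epsilon$ away from the origin. We would then show that there is $r_0>0$ with $\mu(\overline{\mathbb{B}_{r_0}})=\eta<1$. Take a cut-off $\phi\in C_c^\infty(\mathbb{B}_{r_0})$ with $\phi\equiv 1$ near $0$. Since $u_\epsilon\to 0$ in $L^N$,
\begin{flalign*}
\limsup_{\epsilon\to 0}\int_{\mathbb{B}}|\nabla(\phi u_\epsilon)|^N\,dx\le\eta<1 .
\end{flalign*}
The Moser--Trudinger inequality then gives $e^{\alpha_N p\,|\phi u_\epsilon|^{N/(N-1)}}$ bounded in $L^1$ for some $p>1$, uniformly in $\epsilon$ small. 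Elliptic estimates on $\mathbb{B}_{r_0/2}$ then bound $u_\epsilon(0)$, contradicting $c_\epsilon\to\infty$.

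Away from the origin, radial symmetry means any concentration could only occur on spheres $|x|=r>0$. We exclude this via the pointwise bound $u_\epsilon(x)\le C(r)$ for $|x|\ge r$. Combined with $u_\epsilon\to 0$ in $L^p$, this shows the right-hand side $f_\epsilon\to 0$ in $L^1_{loc}(\overline{\mathbb{B}}\setminus\{0\})$. Testing the equation with $u_\epsilon\psi$ for $\psi$ supported away from $0$ then gives $\int\psi|\nabla u_\epsilon|^N\to 0$. Hence $\mu$ is supported at $\{0\}$. Since $\mu\neq\delta_0$ and $\mu(\overline{\mathbb{B}})\le 1$, this forces $\mu(\{0\})<1$, which is precisely the case excluded above. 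Therefore $\mu=\delta_0$.

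\textbf{Main obstacle.} The main obstacle is the local elliptic step: passing from the uniform exponential integrability of $\phi u_\epsilon$ to an $L^\infty$ bound near $0$. One must check that $f_\epsilon$ is in $L^q$ with $q>1$ despite the extra factor $(1+\epsilon)$ in the exponent. I would handle this by choosing $p$ strictly larger than $(1+\epsilon)q$ for all small $\epsilon$, which is possible because $\eta<1$ gives a fixed margin independent of $\epsilon$.
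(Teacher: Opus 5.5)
Your proposal is correct and follows the paper's approach. In both parts, Lemma~\ref{lem: 2.4} (or the Moser--Trudinger inequality) combined with Lemma~\ref{lem: 2.3} gives an $L^q$ bound, $q>1$, on the right-hand side of the Euler--Lagrange equation, and elliptic estimates then bound $c_\epsilon$, which contradicts $c_\epsilon\to+\infty$. Your cut-off localization and your explicit exclusion of mass away from the origin (radial bound plus testing with $u_\epsilon\psi$) make rigorous what the paper only asserts globally, namely $\limsup_{\epsilon\to 0}\|\nabla u_\epsilon\|_{L^N}^N=\kappa<1$, which cannot hold literally because $\|\nabla u_\epsilon\|_{L^N}=1$.
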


\begin{proof}
	We assume that
	\begin{flalign*}
		f_{\epsilon} = \frac{1}{\lambda_{\epsilon}^{\phi}} |x|^{N \epsilon} \Phi^{\prime} \left( \alpha_N \left( 1+ \epsilon \right) |u_{\epsilon}|^{ \frac{N}{N-1}} \right) dx.
	\end{flalign*}
    We will show that
    \begin{flalign*}
    	\int_{\mathbb{B}} f_{\epsilon}^p dx \leq C < \infty,
    \end{flalign*}
    for some $p >1$ and all $\epsilon > 0$. Indeed, by the H\"{o}lder inequality, we get
    \begin{flalign*}
    	\int_{\mathbb{B}} f_{\epsilon}^p dx & \leq \frac{1}{ \lambda_{\epsilon}^{\phi p} } \int_{ \mathbb{B} } |x|^{N p \epsilon} \Phi^{\prime p} \left( \alpha_N \left( 1+ \epsilon \right) |u_{\epsilon}|^{ \frac{N}{N-1}} \right) dx\\
    	& \leq \frac{1}{ \lambda_{\epsilon}^{\phi p} } \int_{ \mathbb{B} } |x|^{N p \epsilon} \Phi^{\prime} \left( \alpha_N \left( 1+ \epsilon \right) p |u_{\epsilon}|^{ \frac{N}{N-1}} \right) dx\\
    	& \leq \frac{1}{ \lambda_{\epsilon}^{\phi p} } \int_{ \mathbb{B} } \Phi^{\prime} \left( \alpha_N \left( 1+ \epsilon \right) p |u_{\epsilon}|^{ \frac{N}{N-1}} \right) dx < \infty, 
    \end{flalign*}
    with $p < \frac{1}{ \left( 1 - ||\nabla u||^N_{L^N} \right)^{ \frac{1}{N-1} } }$ and by Lemma \ref{lem: 2.4}. Then we have $|u_{\epsilon}|^{ \frac{1}{N-1} } f_{\epsilon} \in L^p \left( \mathbb{B} \right)$. Then, we have $||u_{\epsilon}||_{L^{\infty}} < \infty$ by elliptic estimate theory. It is impossible since $c_{\epsilon} \rightarrow + \infty$ as $\epsilon \rightarrow 0$. Hence, $u = 0$.
    
    In addition, we will show $|\nabla u_{\epsilon}|^N \rightharpoonup \delta_0$ as $\epsilon \rightarrow 0$. If not, there exist constants $\kappa \in (0,1)$ such that
    \begin{flalign*}
    	\underset{\epsilon \rightarrow 0}{\lim \sup} ||\nabla u_{\epsilon}||^N_{L^N} = \kappa <1.
    \end{flalign*}
    Consider the equation
    \begin{flalign*}
    	& \int_{\mathbb{B}} \left( |x|^{N \epsilon} |u_{\epsilon}|^{ \frac{1}{N-1} } \Phi^{\prime} \left( \alpha_N \left( 1+ \epsilon \right) |u_{\epsilon}|^{ \frac{N}{N-1} } \right) \right)^ p dx\\
    	& \leq \int_{\mathbb{B}} |u_{\epsilon}|^{ \frac{p}{N-1} } \Phi^{\prime p} \left( \alpha_N \left( 1+ \epsilon \right) |u_{\epsilon}|^{ \frac{N}{N-1} } \right)  dx\\
    	&  \leq \left( \int_{ \mathbb{B}} |u_{\epsilon}|^{ \frac{p_1 \cdot p}{N-1} } dx \right)^{ \frac{1}{p_1} } \left( \Phi^{ \prime p_2 \cdot p} \left( \alpha_N \left( 1+ \epsilon \right) |u_{\epsilon}|^{ \frac{N}{N-1} } \right) \right)^{ \frac{1}{p_2} } \\
    	& \leq \left( \int_{ \mathbb{B}} |u_{\epsilon}|^{ \frac{p_1 \cdot p}{N-1} } dx \right)^{ \frac{1}{p_1} } \left( \Phi^{ \prime } \left( \alpha_N \left( 1+ \epsilon \right) p_2 \cdot p |u_{\epsilon}|^{ \frac{N}{N-1} } \right) \right)^{ \frac{1}{p_2} }.
    \end{flalign*}
    By the concentration-compactness principle, we have that $ \frac{1}{ \lambda_{\epsilon}^{\phi} } |x|^{N \epsilon} |u_{\epsilon}|^{ \frac{1}{N-1}  } \Phi^{\prime} $ are bounded in some $L^{p} \left( \mathbb{B} \right)$ spaces. By using the elliptic estimate 
    again, we have that $|| u_{\epsilon}||_{ L^{\infty} } < \infty$. It is impossible that $c_{\epsilon} \rightarrow + \infty$. Then we know that $|\nabla u_{\epsilon}|^N \rightharpoonup \delta_0$. 
\end{proof}

Considering the constants $r_{\epsilon} = \lambda_{\epsilon}^{ \phi \frac{1}{N} } c_{\epsilon}^{- \frac{1}{N-1}} \exp \left\lbrace - \frac{\alpha_N \left( 1+ \epsilon \right)}{N} c_{\epsilon}^{ \frac{N}{N-1} } \right\rbrace $, we have the following lemma,
\begin{lem} 
	If $c_{\epsilon} \rightarrow + \infty$, then for any $0 < \varrho < \alpha_N$,
	\begin{flalign*}
		\underset{\epsilon \rightarrow 0}{\lim} r_{\epsilon}^N \Phi \left( \varrho \left( 1+ \epsilon \right) c_{\epsilon}^{ \frac{N}{N-1} } \right) = 0.
	\end{flalign*}
    Moreover, we have
	\begin{flalign*}
		\underset{\epsilon \rightarrow 0}{\lim} r_{\epsilon} =0.
	\end{flalign*}
\end{lem}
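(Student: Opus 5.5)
The plan is to prove the two limits in turn: the first comes from the definition of $\lambda_{\epsilon}^{\phi}$ by splitting $\mathbb{B}$ into the region where $u_{\epsilon}$ is close to $c_{\epsilon}$ and its complement, and the second is a corollary of the first.

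\textbf{Step 1: reduce to an exponential bound.} Since $c_{\epsilon}\to+\infty$, we have $\Phi\left(\varrho(1+\epsilon)c_{\epsilon}^{\frac{N}{N-1}}\right)\le e^{\varrho(1+\epsilon)c_{\epsilon}^{\frac{N}{N-1}}}$. Inserting the definition of $r_{\epsilon}$ gives
\begin{flalign*}
r_{\epsilon}^N\Phi\left(\varrho(1+\epsilon)c_{\epsilon}^{\frac{N}{N-1}}\right)\le \lambda_{\epsilon}^{\phi}\,c_{\epsilon}^{-\frac{N}{N-1}}\,e^{-(1+\epsilon)(\alpha_N-\varrho)c_{\epsilon}^{\frac{N}{N-1}}}\cdot e^{(1+\epsilon)(\alpha_N - \varrho) c_\epsilon^{\frac{N}{N-1}}} e^{-\alpha_N(1+\epsilon)c_\epsilon^{\frac{N}{N-1}}}e^{\varrho(1+\epsilon)c_\epsilon^{\frac N{N-1}}},
\end{flalign*}
which is simply $\lambda_{\epsilon}^{\phi}c_{\epsilon}^{-\frac{N}{N-1}}e^{-(\alpha_N-\varrho)(1+\epsilon)c_{\epsilon}^{\frac{N}{N-1}}}$. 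The whole proof therefore reduces to showing that $\lambda_{\epsilon}^{\phi}$ grows strictly slower than $e^{(\alpha_N-\varrho)(1+\epsilon)c_{\epsilon}^{\frac{N}{N-1}}}$.

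\textbf{Step 2: bound $\lambda_{\epsilon}^{\phi}$ (the main obstacle).} Fix $\gamma\in(0,1)$, to be chosen later, and split the integral defining $\lambda_{\epsilon}^{\phi}$ over the sets $\{u_{\epsilon}\le\gamma c_{\epsilon}\}$ and $\{u_{\epsilon}>\gamma c_{\epsilon}\}$.
\begin{itemize}
\item On $\{u_{\epsilon}>\gamma c_{\epsilon}\}$, bound $|u_{\epsilon}|^{\frac{N}{N-1}}\le c_{\epsilon}^{\frac{N}{N-1}}$ and $\Phi'\le e^{t}$. This gives at most
\begin{flalign*}
c_{\epsilon}^{\frac{N}{N-1}}\,\gamma^{-\frac{N}{N-1}}c_{\epsilon}^{-\frac{N}{N-1}}\int_{\mathbb{B}}|x|^{N\epsilon}|u_{\epsilon}|^{\frac{N}{N-1}}\Phi'\left(\alpha_N(1+\epsilon)|u_{\epsilon}|^{\frac{N}{N-1}}\right)dx ,
\end{flalign*}
which is circular and does not close. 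So on this set I would instead use the sup bound directly together with a measure estimate: the region has measure $o(1)$, because $u_{\epsilon}\to0$ in $L^p$ by the previous lemma.
\item A cleaner route is to use that $u_{\epsilon}$ is a maximizer and the supremum $S_{\epsilon}$ is bounded (Theorem 1.1). Then
\begin{flalign*}
\lambda_{\epsilon}^{\phi}\le \int_{\{u_{\epsilon}\le\gamma c_{\epsilon}\}}+\;c_{\epsilon}^{\frac{N}{N-1}}\int_{\mathbb{B}}|x|^{N\epsilon}\Phi'\left(\alpha_N(1+\epsilon)u_{\epsilon}^{\frac{N}{N-1}}\right)dx .
\end{flalign*}
Since $\Phi'=\Phi+\frac{t^{N-2}}{(N-2)!}$ and the lower-order term is bounded in $L^1$, the last integral is at most $C$. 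The second piece is therefore $\le Cc_{\epsilon}^{\frac{N}{N-1}}$.
\item On $\{u_{\epsilon}\le\gamma c_{\epsilon}\}$, I would apply the truncation $\min\{u_{\epsilon},\gamma c_{\epsilon}\}$ together with Lemma~\ref{lem: 2.4}. Because $|\nabla u_{\epsilon}|^N\rightharpoonup\delta_0$ and $u_{\epsilon}\to0$, this truncation has Dirichlet energy close to $\gamma$. Hence $\int \Phi'\left(\alpha_N(1+\epsilon)q\,\min\{u_{\epsilon},\gamma c_{\epsilon}\}^{\frac{N}{N-1}}\right)dx$ is bounded for some $q>1$ when $\gamma<1$.
\item Combining the second and third bullets with Hölder's inequality (to absorb the factor $u_{\epsilon}^{\frac{N}{N-1}}$, which is bounded in every $L^s$) gives $\lambda_{\epsilon}^{\phi}\le C\left(1+c_{\epsilon}^{\frac{N}{N-1}}\right)$.
\end{itemize}

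\textbf{Step 3: conclude.} With this bound, $r_{\epsilon}^N\Phi\left(\varrho(1+\epsilon)c_{\epsilon}^{\frac{N}{N-1}}\right)\le C\,e^{-(\alpha_N-\varrho)c_{\epsilon}^{\frac{N}{N-1}}}\to0$, since $\varrho<\alpha_N$. For the second statement, take any fixed $\varrho\in(0,\alpha_N)$. Then $\Phi\left(\varrho(1+\epsilon)c_{\epsilon}^{\frac{N}{N-1}}\right)\to\infty$, while $r_{\epsilon}^N$ times it tends to $0$, so $r_{\epsilon}\to0$.

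The hard part is the bound on $\lambda_{\epsilon}^{\phi}$ in Step 2. If the truncation argument fails, I would fall back on the cruder estimate $\lambda_{\epsilon}^{\phi}\le c_{\epsilon}^{\frac{N}{N-1}}\left(S_{\epsilon}+C\right)$. That estimate uses only the boundedness of the suprema in Theorem 1.1 and is already sufficient, because the exponential factor $e^{-(\alpha_N-\varrho)c_{\epsilon}^{\frac{N}{N-1}}}$ dominates any polynomial in $c_{\epsilon}$.
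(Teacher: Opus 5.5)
Your proof is correct once you commit to the estimate you call the fallback, and at the key step it takes a different route from the paper. Both arguments reduce everything to a bound on $\lambda_\epsilon^{\phi}$ obtained from $0<u_\epsilon\le c_\epsilon$.

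The paper pulls half of the exponent out at the maximum, $e^{t}\le e^{\alpha_N(1+\epsilon)c_\epsilon^{N/(N-1)}/2}\,e^{t/2}$ with $t=\alpha_N(1+\epsilon)u_\epsilon^{N/(N-1)}$. It then controls the remaining integral by a subcritical Moser--Trudinger bound. This gives $\lambda_\epsilon^{\phi}\le Cc_\epsilon^{N/(N-1)}e^{\alpha_N(1+\epsilon)c_\epsilon^{N/(N-1)}/2}$, hence $r_\epsilon^N\le Ce^{-\alpha_N(1+\epsilon)c_\epsilon^{N/(N-1)}/2}\to0$. As written, however, it controls $r_\epsilon^N\Phi(\varrho(1+\epsilon)c_\epsilon^{N/(N-1)})$ only for $\varrho<\alpha_N/2$.

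You pull nothing out and use the critical bound, $\lambda_\epsilon^{\phi}\le c_\epsilon^{N/(N-1)}(S_\epsilon+C)$. This polynomial bound gives the first claim for every $\varrho<\alpha_N$, and $r_\epsilon\to0$ then follows as you say. The price is that you need $\sup_{\epsilon}S_\epsilon<\infty$, not merely $S_\epsilon<\infty$ for each $\epsilon$, and Theorem 1.1 as stated does not say this. State it explicitly: the change of variables in Section 3 gives $S_\epsilon=S_0/(1+\epsilon)\le S_0$, where $S_0$ is the unweighted radial supremum, finite by Moser's inequality. Only admissibility of $u_\epsilon$ is used here, not maximality.

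An intermediate option is to pull out a fraction $\theta\in(0,1-\varrho/\alpha_N)$ of the exponent instead of $1/2$. That needs only the classical unweighted Moser--Trudinger inequality, since $|x|^{N\epsilon}\le 1$ and $(1-\theta)(1+\epsilon)<1$ for small $\epsilon$.

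You should delete the rest of Step 2. Splitting at $\gamma c_\epsilon$ is unnecessary, because $u_\epsilon^{N/(N-1)}\le c_\epsilon^{N/(N-1)}$ on all of $\mathbb{B}$, so your second bullet already bounds all of $\lambda_\epsilon^{\phi}$.

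The third bullet is not merely superfluous but unsound. Concentration $|\nabla u_\epsilon|^N\rightharpoonup\delta_0$ together with $u_\epsilon\to0$ says nothing about how the Dirichlet energy is distributed among level sets. So it does not give $\int_{\mathbb{B}}|\nabla\min\{u_\epsilon,\gamma c_\epsilon\}|^N dx\approx\gamma$. In the paper that fact is the later truncation lemma for $u_{\epsilon,\varrho}$. Its proof uses the blow-up limit $\phi_0$, which in turn needs $r_\epsilon\to0$, so invoking it here would be circular. Lemma~\ref{lem: 2.4} cannot replace it either, since the weak limit is $0$. The measure argument in your first bullet likewise leads nowhere.
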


\begin{proof}
	By direct calculation, we have
	\begin{flalign*}
		r_{\epsilon}^N \Phi \left( \varrho \left( 1+ \epsilon \right) c_{\epsilon}^{ \frac{N}{N-1} } \right) & = \lambda_{\epsilon}^{\phi} c_{\epsilon}^{- \frac{N}{N-1}} e^{ - \alpha_N \left( 1+ \epsilon \right) c_{\epsilon}^{ \frac{N}{N-1} } } \Phi \left( \varrho \left( 1+ \epsilon \right) c_{\epsilon}^{ \frac{N}{N-1} } \right)\\
		& = \frac{ \lambda_{\epsilon}^{\phi} }{ c_{\epsilon}^{ \frac{N}{N-1}} e^{ \alpha_N \left( 1+ \epsilon \right) c_{\epsilon}^{ \frac{N}{N-1} } } } \times\\
		& \left( e^{ \varrho \left( 1+ \epsilon \right) |c_{\epsilon}|^{ \frac{N}{N-1} } } - \sum_{n=0}^{N-2} \frac{ \varrho^n \left( 1+ \epsilon \right)^n |c_{\epsilon}|^{\frac{n N}{N-1}}  }{n !} \right).
	\end{flalign*}
    We also have that
    \begin{flalign*}
    	\lambda_{\epsilon}^{\phi} = \int_{\mathbb{B}} |x|^{N \epsilon} |u_{\epsilon}|^{ \frac{N}{N-1} } \Phi^{ \prime} \left( \alpha_N \left( 1+ \epsilon \right) |u_{\epsilon}|^{ \frac{N}{N-1} } \right) dx.
    \end{flalign*}
    Then we have the following estimates
    \begin{flalign*}
    	\lambda_{\epsilon}^{\phi} & \leq |c_{\epsilon}|^{ \frac{N}{N-1} } \Phi^{\prime} \left( \frac{\alpha_N \left( 1+ \epsilon \right) |c_{\epsilon}|^{ \frac{N}{N-1} }}{2} \right) \int_{\mathbb{B}} |x|^{N \epsilon} \Phi^{ \prime} \left( \frac{\alpha_N \left( 1+ \epsilon \right) |u_{\epsilon}|^{ \frac{N}{N-1} }}{2} \right) dx\\
    	& \leq C |c_{\epsilon}|^{ \frac{N}{N-1} } \Phi^{\prime} \left( \frac{\alpha_N \left( 1+ \epsilon \right) |c_{\epsilon}|^{ \frac{N}{N-1} }}{2} \right).
    \end{flalign*}
    We also have
    \begin{flalign*}
    	r_{\epsilon}^N c_{\epsilon}^{ \frac{N}{N-1} } & = \lambda_{\epsilon}^{\phi} e^{ - \alpha_N \left( 1+ \epsilon \right) c_{\epsilon}^{ \frac{N}{N-1} } }\\
    	& \leq C c_{\epsilon}^{ \frac{N}{N-1} } e^{ -\frac{ \alpha_N \left( 1+ \epsilon \right) c_{\epsilon}^{ \frac{N}{N-1} } }{2} } .
    \end{flalign*}
    Then the proof is completed.
\end{proof}

\begin{lem}
	We set 
	\begin{flalign*}
		& \psi_{\epsilon} = c_{\epsilon}^{-1} u_{\epsilon} \left(r_{\epsilon}^{ \frac{1}{1 + \epsilon} } x \right),\\
		& \phi_{\epsilon} = \gamma_{\epsilon} c_{\epsilon}^{ \frac{1}{N-1} } \left( u_{\epsilon} \left(r_{\epsilon}^{ \frac{1}{1 + \epsilon} } x \right) - c_{\epsilon} \right).
	\end{flalign*}
    Then 
    \begin{flalign*}
    	& \psi_{\epsilon} \rightarrow 1, ~ C^1_{loc} \left( \mathbb{R}^N \right),\\
    	& \phi_{\epsilon} \rightarrow \phi_0, ~ C^1_{loc} \left( \mathbb{R}^N \right),
    \end{flalign*}
    where $r_{\epsilon} = \frac{\lambda_{\epsilon}^{ \frac{1}{N} } }{ c_{\epsilon}^{ \frac{1}{N-1} } e^{ \frac{\alpha_N}{N} \left( 1+ \epsilon \right) |c_{\epsilon}|^{ \frac{N}{N-1} } } }$ and $\gamma_{\epsilon} = \frac{N}{N-1} \alpha_N$.
\end{lem}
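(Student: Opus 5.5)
We will carry out a standard blow-up analysis around the origin. The scaling $x\mapsto r_\epsilon^{1/(1+\epsilon)}x$ is chosen so that it absorbs the weight $|x|^{N\epsilon}$: the substitution $y=r_\epsilon^{1/(1+\epsilon)}x$ turns $|y|^{N\epsilon}$ into $r_\epsilon^{N\epsilon/(1+\epsilon)}|x|^{N\epsilon}$. We first rewrite the Euler--Lagrange equation \eqref{eq:4.1} for $\psi_\epsilon$ and $\phi_\epsilon$ on the rescaled ball $\mathbb{B}_{r_\epsilon^{-1/(1+\epsilon)}}$, which exhausts $\mathbb{R}^N$ because $r_\epsilon\to0$ by the previous lemma.

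\textbf{Equation for $\psi_\epsilon$.} A direct computation gives
\[
-\operatorname{div}\left(|\nabla\psi_\epsilon|^{N-2}\nabla\psi_\epsilon\right)=c_\epsilon^{1-N}\frac{r_\epsilon^{N/(1+\epsilon)}}{\lambda^\phi_\epsilon}\,r_\epsilon^{N\epsilon/(1+\epsilon)}|x|^{N\epsilon}\,u_\epsilon^{\frac{1}{N-1}}\,\Phi'\left(\alpha_N(1+\epsilon)u_\epsilon^{\frac{N}{N-1}}\right).
\]
Three bounds control the right-hand side. First, $0\le u_\epsilon\le c_\epsilon$. Second, $\Phi'\le e^t$. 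Third, $r_\epsilon^N=\lambda^\phi_\epsilon c_\epsilon^{-N/(N-1)}e^{-\alpha_N(1+\epsilon)c_\epsilon^{N/(N-1)}}$. Together they bound the right-hand side by $C c_\epsilon^{-N}$ up to the factor $r_\epsilon^{N\epsilon/(1+\epsilon)}|x|^{N\epsilon}\le C$ on compact sets, so it tends to $0$ locally uniformly. Since $0\le\psi_\epsilon\le1$ and $\psi_\epsilon(0)=1$, the $C^{1,\beta}$ estimates for the $N$-Laplacian (Tolksdorf, Serrin, DiBenedetto) give convergence in $C^1_{loc}$ to some $\psi$. This limit is $N$-harmonic on $\mathbb{R}^N$, bounded, and attains its maximum $1$ at $0$. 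The Liouville theorem (or the strong maximum principle) then yields $\psi\equiv1$.

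\textbf{Equation for $\phi_\epsilon$.} For $\phi_\epsilon$ the homogeneity factor is $c_\epsilon^{N/(N-1)}c_\epsilon^{-1}$ times $\gamma_\epsilon^{N-1}$. Using the value of $r_\epsilon^N$, the right-hand side becomes
\[
\gamma_\epsilon^{N-1}\,r_\epsilon^{N\epsilon/(1+\epsilon)}|x|^{N\epsilon}\,\psi_\epsilon^{\frac{1}{N-1}}\,e^{\alpha_N(1+\epsilon)\left(u_\epsilon^{N/(N-1)}-c_\epsilon^{N/(N-1)}\right)}\,(1+o(1)),
\]
where the $o(1)$ accounts for replacing $\Phi'$ by $e^t$. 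This replacement is legitimate because $u_\epsilon\sim c_\epsilon\to\infty$ on compact sets, by the first step. Writing $u_\epsilon^{N/(N-1)}-c_\epsilon^{N/(N-1)}=\frac{N}{N-1}c_\epsilon^{1/(N-1)}(u_\epsilon-c_\epsilon)(1+o(1))$, the exponent is $\frac{\alpha_N(1+\epsilon)}{\gamma_\epsilon}\frac{N}{N-1}\phi_\epsilon(1+o(1))\le0$. Hence the right-hand side is locally bounded, and $\phi_\epsilon\le0$ with $\phi_\epsilon(0)=0$.

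\textbf{Main obstacle: local $L^\infty$ bounds for $\phi_\epsilon$.} I would first show that the factor $r_\epsilon^{N\epsilon/(1+\epsilon)}$ stays bounded away from $0$, i.e.\ has a limit in $(0,1]$. This should follow from $\epsilon\log r_\epsilon\approx-\epsilon\frac{\alpha_N}{N}c_\epsilon^{N/(N-1)}$. It is nontrivial, since it requires an a priori rate $\epsilon c_\epsilon^{N/(N-1)}=O(1)$, which I would try to extract from the energy bound $\lambda^\phi_\epsilon\le C$ together with the limit of the supremum. If that fails, the weight can simply be kept in the limit equation.

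Granting this, I would decompose $\phi_\epsilon=h_\epsilon+w_\epsilon$ on $\mathbb{B}_R$, where $w_\epsilon$ solves the equation with bounded right-hand side and zero boundary data, so $w_\epsilon$ is bounded, and $h_\epsilon$ is $N$-harmonic. Because $h_\epsilon\le C$ and $h_\epsilon(0)\ge -C$, Harnack's inequality applied to $C-h_\epsilon$ gives a uniform bound on $\mathbb{B}_{R/2}$. The $C^{1,\beta}$ estimates then give $\phi_\epsilon\to\phi_0$ in $C^1_{loc}(\mathbb{R}^N)$ along a subsequence. The limit $\phi_0$ solves $-\Delta_N\phi_0=\gamma^{N-1}e^{\frac{N}{N-1}\alpha_N\phi_0/\gamma}$, possibly with a limiting weight constant, and $\phi_0$ is radial because each $u_\epsilon$ is radial.
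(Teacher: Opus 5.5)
Your overall scheme (rescale, show $\psi_\epsilon\to1$, bound $\phi_\epsilon$ locally, pass to the limit) matches the paper's. However, a bookkeeping error creates a spurious obstacle and leaves your conclusion undetermined.

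\textbf{The extra factor.} Set $s_\epsilon=r_\epsilon^{1/(1+\epsilon)}$ and $\tilde u_\epsilon(x)=u_\epsilon(s_\epsilon x)$. The $N$-Laplacian contributes $s_\epsilon^N=r_\epsilon^{N/(1+\epsilon)}$, and the weight contributes $s_\epsilon^{N\epsilon}=r_\epsilon^{N\epsilon/(1+\epsilon)}$. Their product is exactly $r_\epsilon^N$; that is why one rescales by $r_\epsilon^{1/(1+\epsilon)}$ rather than by $r_\epsilon$. You write both factors, but then substitute the formula for $r_\epsilon^N$ into $r_\epsilon^{N/(1+\epsilon)}$ alone and carry $r_\epsilon^{N\epsilon/(1+\epsilon)}$ as a leftover.

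For $\psi_\epsilon$ this is harmless, since the true right-hand side is at most $c_\epsilon^{-N}|x|^{N\epsilon}$. For $\phi_\epsilon$, the correct equation is
\[
-\operatorname{div}\left(|\nabla\phi_\epsilon|^{N-2}\nabla\phi_\epsilon\right)=\gamma_\epsilon^{N-1}|x|^{N\epsilon}\psi_\epsilon^{\frac{1}{N-1}}e^{-\alpha_N(1+\epsilon)c_\epsilon^{\frac{N}{N-1}}}\Phi'\left(\alpha_N(1+\epsilon)\tilde u_\epsilon^{\frac{N}{N-1}}\right),
\]
with no factor $r_\epsilon^{N\epsilon/(1+\epsilon)}$; this is the form in the paper's proof.

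Because you keep that factor, you need $\epsilon c_\epsilon^{N/(N-1)}=O(1)$, which you do not establish. You end with a limit equation ``possibly with a limiting weight constant'' $\kappa\in[0,1]$, and that does not prove the lemma:
\begin{itemize}
\item for $\kappa\in(0,1)$ the limit would be $-N\ln\left(1+\kappa^{\frac{1}{N-1}}(\omega_{N-1}/N)^{\frac{1}{N-1}}|x|^{\frac{N}{N-1}}\right)\neq\phi_0$;
\item for $\kappa=0$ the limit would be identically $0$.
\end{itemize}
Once the factor is removed, the obstacle disappears and the limit equation is $-\operatorname{div}(|\nabla\phi|^{N-2}\nabla\phi)=\gamma^{N-1}e^{\phi}$. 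You still have to identify the limit. Uniqueness for the radial initial value problem with $\phi(0)=0$, $\phi'(0)=0$ gives $\phi_0=-N\ln\left(1+(\omega_{N-1}/N)^{\frac{1}{N-1}}|x|^{\frac{N}{N-1}}\right)$.

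\textbf{The decomposition step.} For $N\ge3$ you cannot split $\phi_\epsilon=h_\epsilon+w_\epsilon$ into an $N$-harmonic part plus a solution of the inhomogeneous problem with zero boundary data. The $N$-Laplacian is nonlinear, so your bound $h_\epsilon(0)\ge -C$ is unjustified. Two replacements work:
\begin{itemize}
\item apply Serrin's Harnack inequality to $-\phi_\epsilon\ge0$, whose right-hand side is bounded;
\item more simply, since everything is radial, integrate the ODE. If $f_\epsilon\le C_R$ is the right-hand side on $\mathbb{B}_R$, then $r^{N-1}|\phi_\epsilon'(r)|^{N-1}=\int_0^r s^{N-1}f_\epsilon(s)\,ds\le C_R r^N$, so $|\phi_\epsilon'(r)|\le C r^{1/(N-1)}$. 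Together with $\phi_\epsilon(0)=0$, this gives uniform $C^1$ bounds and equicontinuity of $\phi_\epsilon'$ on $\mathbb{B}_R$ directly.
\end{itemize}
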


\begin{proof}
	From direct calculation,
    \begin{flalign*}
    	- \operatorname{div} \left( |\nabla \psi_{\epsilon} |^{N-2} \nabla \psi_{\epsilon} \right) & = c_{\epsilon}^{-N+ \frac{1}{N-1} } r_{\epsilon}^{ N} \frac{1}{ \lambda_{\epsilon}^{\phi} } |x|^{N \epsilon} |\psi_{\epsilon}|^{ \frac{1}{N-1} } \Phi^{\prime} \left( \alpha_N \left( 1+ \epsilon \right) |\tilde{ u }_{\epsilon}|^{ \frac{N}{N-1} } \right)\\
    	& = c_{\epsilon}^{ - \left( N +1 \right) } |x|^{N \epsilon} |\psi_{\epsilon} |^{ \frac{1}{N-1} } e^{ - \alpha_N \left( 1 + \epsilon \right) |c_{\epsilon}|^{ \frac{N}{N-1} } } \Phi^{ \prime } \left( \alpha_N \left( 1 + \epsilon \right) |\tilde{u}_{\epsilon}|^{ \frac{N}{N-1} } \right).
    \end{flalign*}
    \begin{flalign*}
    	-\operatorname{div}  \left( |\nabla \phi_{\epsilon} |^{N-2} \nabla \phi_{\epsilon} \right)&  = - \left( \gamma_{\epsilon} c_{\epsilon}^{ \frac{1}{N-1} } r_{\epsilon}^{ \frac{1}{1+ \epsilon} } \right)^{N-1} r_{\epsilon}^{ \frac{1}{1+ \epsilon} } \operatorname{div}  \left( |\nabla \tilde{u}_{\epsilon} |^{N-2} \nabla \tilde{u}_{\epsilon} \right)\\
    	& = e^{ - \alpha_N \left( 1+ \epsilon \right) |c_{\epsilon}|^{ \frac{N}{N-1} } } \gamma_{\epsilon}^{N-1} |x|^{N \epsilon} | \psi_{\epsilon}|^{ \frac{1}{N-1} } \Phi^{\prime} \left( \alpha_N \left( 1+ \epsilon \right) |\tilde{ u }_{\epsilon}|^{ \frac{N}{N-1} } \right).
    \end{flalign*}
    From \cite{5} and \cite{6}, we have  
    \begin{flalign*}
    	-\operatorname{div} \left( |\nabla \phi_{\epsilon}|^{N-2} \nabla \phi_{\epsilon} \right) & = \gamma_{\epsilon}^{N-1} |x|^{N \epsilon} |\psi_{\epsilon}|^{ \frac{1}{N-1} } \exp \left\lbrace \alpha_N \left( 1+ \epsilon \right) \left( |\tilde{ u }_{\epsilon} |^{ \frac{N}{N-1} } - |c_{\epsilon}|^{ \frac{N}{N-1} }\right) \right\rbrace \\
    	& + O \left( r_{\epsilon}^N c_{\epsilon}^N \right),
    \end{flalign*}
    \begin{flalign*}
    	& \underset{ \mathbb{B}_R }{osc} \phi_{\epsilon} \leq C(R),\\
    	& ||\phi_{\epsilon}||_{ C^{1, \alpha} } < C(R).
    \end{flalign*}
    Therefor, we have
    \begin{flalign*}
    	& \phi_{\epsilon} \rightarrow \phi_0, ~ C^{1,\alpha}_{loc} \left( \mathbb{B}_R \right);\\
    	& \tilde{u}_{\epsilon} - c_{\epsilon} \rightarrow 0, ~ C^{1,\alpha}_{loc} \left( \mathbb{B}_R \right).
    \end{flalign*}
    Then we have results as follows, $\alpha_N \left( 1+ \epsilon \right) \left( |\tilde{ u}_{\epsilon}  |^{ \frac{N}{N-1} } - |c_{\epsilon}|^{ \frac{N}{N-1} }\right) \rightarrow \phi $ as $\epsilon \rightarrow 0$ in $C^0_{loc} \left( \mathbb{B}_R\right)$,
    \begin{flalign*}
    	- \operatorname{div} \left( |\nabla \phi|^{N-2} \nabla \phi \right) = \gamma^{N-1} e^{\phi},
    \end{flalign*}
    where $\gamma = \gamma_{\epsilon}$. Since $\phi$ is radially symmetric and decreasing, it is easy to see that the equation has only one solution $\phi_0$. Moreover, we have 
    \begin{flalign*}
    	& \phi_0 (0) = 0 = \max \phi_0 (x).\\
    	& \phi_0 (x) = - N \ln \left( 1 + \left( \frac{ \omega_{N-1} }{N } \right)^{ \frac{1}{N-1} } |x|^{ \frac{N}{N-1} } \right), ~ \int_{ \mathbb{R}^N } e^{\phi_0} dx =1.\\
    \end{flalign*}
\end{proof}

\begin{lem}
	For any $0 < \varrho <1$, we have 
	\begin{flalign*}
		\underset{\epsilon \rightarrow 0}{\lim \sup} \int_{\mathbb{B}} |\nabla u_{\epsilon, \varrho}|^N dx \leq \varrho, 
	\end{flalign*}
    where $u_{\epsilon, \varrho} := \min \{u_{\epsilon}, \varrho c_{\epsilon} \}$.
\end{lem}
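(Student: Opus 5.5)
Test the Euler--Lagrange equation \eqref{eq:4.1} against the truncation $u_{\epsilon,\varrho}=\min\{u_\epsilon,\varrho c_\epsilon\}$, which lies in $W_0^{1,N}(\mathbb{B})$. Since $\nabla u_{\epsilon,\varrho}=\nabla u_\epsilon$ on $\{u_\epsilon<\varrho c_\epsilon\}$ and vanishes elsewhere, this gives
\begin{flalign*}
\int_{\mathbb{B}} |\nabla u_{\epsilon,\varrho}|^N dx = \int_{\mathbb{B}} \frac{1}{\lambda_\epsilon^{\phi}} |x|^{N\epsilon} u_{\epsilon,\varrho}\, u_\epsilon^{\frac{1}{N-1}} \Phi'\left(\alpha_N(1+\epsilon) u_\epsilon^{\frac{N}{N-1}}\right) dx .
\end{flalign*}
The right-hand side is nonnegative, so any lower bound on a subregion bounds it from below. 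Fix $R>0$ and restrict to the ball $\mathbb{B}_{R r_\epsilon^{1/(1+\epsilon)}}$. By the preceding lemma, $\psi_\epsilon\to 1$ in $C^1_{loc}$, so $u_\epsilon\ge \varrho c_\epsilon$ there for $\epsilon$ small, and hence $u_{\epsilon,\varrho}=\varrho c_\epsilon$ on this ball.

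Next change variables $x=r_\epsilon^{1/(1+\epsilon)}y$ and use the definition of $r_\epsilon$. Since $r_\epsilon^N c_\epsilon^{N/(N-1)} e^{\alpha_N(1+\epsilon)c_\epsilon^{N/(N-1)}}=\lambda_\epsilon^{\phi}$, the integrand over that ball becomes, after the Jacobian and the weight $|x|^{N\epsilon}$ are accounted for (the scaling exponent $\frac{1}{1+\epsilon}$ is chosen exactly so these combine to $r_\epsilon^N$ up to a factor tending to $1$),
\begin{flalign*}
\varrho\, \psi_\epsilon^{\frac{1}{N-1}}(y)\, |y|^{N\epsilon} \exp\left\{\alpha_N(1+\epsilon)\left(u_\epsilon^{\frac{N}{N-1}}-c_\epsilon^{\frac{N}{N-1}}\right)\right\} + o_\epsilon(1).
\end{flalign*}
Here the lower-order terms of $\Phi'$ contribute $o(1)$, because they are polynomial in $c_\epsilon$ while being multiplied by $e^{-\alpha_N c_\epsilon^{N/(N-1)}}$-type factors (as in the lemma controlling $r_\epsilon^N\Phi$). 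The $C^1_{loc}$ convergence of the previous lemma then gives convergence of the exponent to $\phi_0$ uniformly on $\mathbb{B}_R$, with $|y|^{N\epsilon}\to1$ uniformly on $\mathbb{B}_R$ away from a null set. We therefore obtain
\begin{flalign*}
\liminf_{\epsilon\to0}\int_{\mathbb{B}} |\nabla u_{\epsilon,\varrho}|^N dx \ge \varrho \int_{\mathbb{B}_R} e^{\phi_0} dy ,
\end{flalign*}
and letting $R\to\infty$ with $\int_{\mathbb{R}^N}e^{\phi_0}=1$ yields the lower bound $\varrho$.

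For the upper bound, apply the same argument to $(u_\epsilon-\varrho c_\epsilon)^+$. Testing the equation gives
\begin{flalign*}
\int |\nabla(u_\epsilon-\varrho c_\epsilon)^+|^N = \frac{1}{\lambda_\epsilon^{\phi}}\int |x|^{N\epsilon}(u_\epsilon-\varrho c_\epsilon)^+ u_\epsilon^{\frac1{N-1}}\Phi'(\cdots) ,
\end{flalign*}
and on the blow-up ball $(u_\epsilon-\varrho c_\epsilon)^+=(1-\varrho)c_\epsilon(1+o(1))$. The same rescaling shows the liminf is at least $1-\varrho$. Since $\|\nabla u_\epsilon\|_{L^N}^N=1$ and the gradients of $u_{\epsilon,\varrho}$ and $(u_\epsilon-\varrho c_\epsilon)^+$ have disjoint supports, their energies sum to $1$. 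Hence
\begin{flalign*}
\limsup_{\epsilon\to0}\int |\nabla u_{\epsilon,\varrho}|^N \le 1-(1-\varrho)=\varrho,
\end{flalign*}
which is the claim.

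The main obstacle is the rescaling bookkeeping in the second paragraph. The scaling is by $r_\epsilon^{1/(1+\epsilon)}$ rather than $r_\epsilon$, so the Jacobian $r_\epsilon^{N/(1+\epsilon)}$ together with the weight $|x|^{N\epsilon}=r_\epsilon^{N\epsilon/(1+\epsilon)}|y|^{N\epsilon}$ must recombine to exactly $r_\epsilon^N$. This should hold because $\frac{N+N\epsilon}{1+\epsilon}=N$, but it needs to be checked carefully. One must also confirm that the lower-order parts of $\Phi'$ and the $O(r_\epsilon^Nc_\epsilon^N)$ remainders vanish in the limit, using the earlier lemma that $r_\epsilon^N\Phi(\varrho(1+\epsilon)c_\epsilon^{N/(N-1)})\to0$.
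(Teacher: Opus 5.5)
Your upper-bound argument is correct and is essentially the paper's proof: test the Euler--Lagrange equation with $(u_\epsilon-\varrho c_\epsilon)^+$, keep only the ball $\mathbb{B}_{Rr_\epsilon^{1/(1+\epsilon)}}$, and rescale to get $\liminf_{\epsilon\to0}\int_{\mathbb{B}}|\nabla(u_\epsilon-\varrho c_\epsilon)^+|^N\,dx\ge(1-\varrho)\int_{\mathbb{B}_R}e^{\phi_0}\,dy$; then let $R\to\infty$ and use $\int_{\mathbb{B}}|\nabla u_{\epsilon,\varrho}|^N\,dx+\int_{\mathbb{B}}|\nabla(u_\epsilon-\varrho c_\epsilon)^+|^N\,dx=1$. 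Your first paragraph, which tests with $u_{\epsilon,\varrho}$ to get $\liminf\ge\varrho$, is also valid but is not needed for this statement; it only strengthens the conclusion to $\lim=\varrho$.
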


\begin{proof}
	We consider the equation
	\begin{flalign*}
		& \int_{\mathbb{B}} |\nabla \left(u_{\epsilon}- \varrho c_{\epsilon} \right)^+ |^N dx \\
		& = \frac{1}{\lambda_{\epsilon}^{\phi} } \int_{ \mathbb{B} } |x|^{N \epsilon} \left(u_{\epsilon}- \varrho c_{\epsilon} \right)^+ |u_{\epsilon}|^{ \frac{1}{N-1} } e^{ \alpha_N \left( 1+ \epsilon \right) |u_{\epsilon}|^{ \frac{N}{N-1} } } dx + o_{\epsilon} (1)\\
		& \geq \frac{1}{\lambda_{\epsilon}^{\phi} } \int_{ \mathbb{B}_{R r^{ \frac{1}{1 + \epsilon} }_{\epsilon} } } |x|^{N \epsilon} \left(u_{\epsilon}- \varrho c_{\epsilon} \right)^+ |u_{\epsilon}|^{ \frac{1}{N-1} } e^{ \alpha_N \left( 1+ \epsilon \right) |u_{\epsilon}|^{ \frac{N}{N-1} } } dx + o_{\epsilon} (1)\\
		& = c_{\epsilon}^{ - \frac{N}{N-1} } \exp \left\lbrace - \alpha_N \left( 1+ \epsilon \right) c_{\epsilon}^{ \frac{N}{N-1} }  \right\rbrace \times\\
		& \int_{\mathbb{B}_R} |y|^{N \epsilon} \left( \tilde{u}_{\epsilon} - \varrho c_{\epsilon} \right)^+ |\tilde{u}_{\epsilon}|^{ \frac{1}{N-1} } e^{ \alpha_N \left( 1+ \epsilon \right) |\tilde{u}_{\epsilon}|^{ \frac{N}{N-1} } } dy + o_{\epsilon} (1).
	\end{flalign*}
    We also have
    \begin{flalign*}
    	|\tilde{u}_{\epsilon}|^{ \frac{N}{N-1} } & = c_{\epsilon}^{ \frac{N}{N-1} } \left( 1+ \frac{ |\tilde{u}_{\epsilon}|  - c_{\epsilon} }{ c_{\epsilon} } \right)^{ \frac{N}{N-1} }\\
    	& = c_{\epsilon}^{ \frac{N}{N-1} } \left( 1 + \frac{N}{N-1} \frac{ |\tilde{u}_{\epsilon}|  - c_{\epsilon} }{ c_{\epsilon} } + O \left(  \frac{1}{ c_{\epsilon}^2 } \right) \right).
    \end{flalign*}
    Then 
    \begin{flalign*}
    	& \int_{\mathbb{B}_R}|y|^{N \epsilon} \left( \tilde{u}_{\epsilon} - \varrho c_{\epsilon} \right)^+ |\tilde{u}_{\epsilon}|^{ \frac{1}{N-1} } e^{ \alpha_N \left( 1+ \epsilon \right) |\tilde{u}_{\epsilon}|^{ \frac{N}{N-1} } } dy\\
    	& = \int_{\mathbb{B}_R} |y|^{N 
    	\epsilon} \left( \frac{ \tilde{u}_{\epsilon} - \varrho c_{\epsilon} }{ c_{\epsilon} } \right)^+ \left| \frac{\tilde{u}_{\epsilon}}{c_{\epsilon}} \right|^{ \frac{1}{N-1} } e^{ \alpha_N \left( 1 + \epsilon \right) \left( |\tilde{u}_{\epsilon}|^{ \frac{N}{N-1} } - c_{\epsilon}^{ \frac{N}{N-1} } \right) } dy. 
    \end{flalign*}
    Then we have 
    \begin{flalign*}
    	& \int_{\mathbb{B}_R} |y|^{N 
    		\epsilon} \left( \frac{ \tilde{u}_{\epsilon} - \varrho c_{\epsilon} }{ c_{\epsilon} } \right)^+ \left| \frac{\tilde{u}_{\epsilon}}{c_{\epsilon}} \right|^{ \frac{1}{N-1} } e^{ \alpha_N \left( 1 + \epsilon \right) \left( |\tilde{u}_{\epsilon}|^{ \frac{N}{N-1} } - c_{\epsilon}^{ \frac{N}{N-1} } \right) } dy\\
    	& = \int_{\mathbb{B}_R} |y|^{N 
    		\epsilon} \left( \frac{ \tilde{u}_{\epsilon} - \varrho c_{\epsilon} }{ c_{\epsilon} } \right)^+ \left| \frac{\tilde{u}_{\epsilon}}{c_{\epsilon}} \right|^{ \frac{1}{N-1} } e^{ \alpha_N \left( 1+\epsilon \right) \frac{N \cdot c_{\epsilon}^{ \frac{1}{N-1} } }{N-1} \left( |\tilde{u}_{\epsilon}| - c_{\epsilon} \right) + o(1)} dy \\
    	&= \int_{\mathbb{B}_R} |y|^{N 
    		\epsilon} \left( \frac{ \tilde{u}_{\epsilon} - \varrho c_{\epsilon} }{ c_{\epsilon} } \right)^+ \left| \frac{\tilde{u}_{\epsilon}}{c_{\epsilon}} \right|^{ \frac{1}{N-1} } e^{\phi_{\epsilon} + o(1)} dy .
    \end{flalign*}
    Therefor, we have
    \begin{flalign*}
    	\underset{\epsilon \rightarrow 0}{\lim \inf} \int_{\mathbb{B}} |\nabla \left( u_{\epsilon} - \varrho c_{\epsilon} \right)^+|^N dx \geq \left( 1- \varrho \right) \int_{\mathbb{B}_R} e^{\phi } dx.
    \end{flalign*}
    Letting $R \rightarrow + \infty$, we have 
    \begin{flalign*}
    	\underset{\epsilon \rightarrow 0}{\lim \inf} \int_{\mathbb{B}} |\nabla \left( u_{\epsilon} - \varrho c_{\epsilon}  \right)^+|^N dx \geq 1- \varrho.
    \end{flalign*}
    Then 
    \begin{flalign*}
        \int_{\mathbb{B}} |\nabla u_{\epsilon, \varrho} |^N dx & = 1 - \int_{\mathbb{B}} |\nabla \left( u_{\epsilon} - \varrho c_{\epsilon} \right)^+|^N dx\\
        & \leq 1- \left( 1- \varrho \right) + o(1).
    \end{flalign*}
    Hence, we obtain this lemma.
\end{proof}

\begin{lem} \label{lem: 4.6}
	\begin{flalign*}
		& \underset{\epsilon \rightarrow 0}{\lim} \int_{\mathbb{B}} |x|^{N \epsilon} \Phi \left( \alpha_N \left( 1+ \epsilon \right) |u_{\epsilon}|^{ \frac{N}{N-1} } \right) dx\\
		&  \leq \underset{\epsilon \rightarrow 0}{\lim \sup} \frac{ \lambda_{\epsilon} }{ c_{\epsilon}^{ \frac{N}{N-1} } }.
	\end{flalign*}
\end{lem}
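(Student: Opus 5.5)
The plan is the standard splitting argument. Fix $0<\varrho<1$ and cut the integral at the level $\varrho c_{\epsilon}$:
\begin{flalign*}
\int_{\mathbb{B}} |x|^{N\epsilon}\Phi\left(\alpha_N(1+\epsilon)|u_\epsilon|^{\frac{N}{N-1}}\right)dx = \int_{\{u_\epsilon<\varrho c_\epsilon\}} \cdots \, dx + \int_{\{u_\epsilon\geq\varrho c_\epsilon\}} \cdots \, dx =: I_\epsilon + II_\epsilon .
\end{flalign*}

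\textbf{First piece.} On $\{u_\epsilon<\varrho c_\epsilon\}$ we have $u_\epsilon=u_{\epsilon,\varrho}$. Hence
\begin{flalign*}
I_\epsilon\leq \int_{\mathbb{B}}|x|^{N\epsilon}\Phi\left(\alpha_N(1+\epsilon)|u_{\epsilon,\varrho}|^{\frac{N}{N-1}}\right)dx .
\end{flalign*}
By the preceding lemma, $\limsup_{\epsilon\to0}\|\nabla u_{\epsilon,\varrho}\|_{L^N}^N\leq\varrho<1$. So for $\epsilon$ small, $u_{\epsilon,\varrho}/\|\nabla u_{\epsilon,\varrho}\|_{L^N}$ is admissible, and the exponent equals $\alpha_N(1+\epsilon)\,q$ times its $\frac{N}{N-1}$-power, with $q=\|\nabla u_{\epsilon,\varrho}\|^{N/(N-1)}_{L^N}\leq \varrho^{1/(N-1)}+o(1)<1$.

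Using H\"older's inequality with exponent $p>1$ and $pq<1$, together with Lemma \ref{lem: 2.3} (as in Step 1 of Section 3), the integrand is bounded in $L^p$ uniformly in $\epsilon$. Since $u_\epsilon\to0$ a.e. (earlier lemma: $u=0$) and $u_{\epsilon,\varrho}\leq u_\epsilon$, Vitali's theorem gives $I_\epsilon\to\int_{\mathbb{B}}\Phi(0)\,dx=0$, because $\Phi(0)=0$ for $N\geq2$. For $N=2$ one has $\Phi(t)=e^t-1$, and this still vanishes in the limit, although the ``$-1$'' terms need a short check.

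\textbf{Second piece.} On $\{u_\epsilon\geq\varrho c_\epsilon\}$ we have $|u_\epsilon|^{\frac{N}{N-1}}\geq(\varrho c_\epsilon)^{\frac{N}{N-1}}$. Also $\Phi(t)\leq\Phi'(t)$, since $\Phi'(t)=\Phi(t)+\frac{t^{N-2}}{(N-2)!}$. Therefore
\begin{flalign*}
II_\epsilon\leq \frac{1}{\varrho^{\frac{N}{N-1}}c_\epsilon^{\frac{N}{N-1}}}\int_{\mathbb{B}}|x|^{N\epsilon}|u_\epsilon|^{\frac{N}{N-1}}\Phi'\left(\alpha_N(1+\epsilon)|u_\epsilon|^{\frac{N}{N-1}}\right)dx=\frac{\lambda^\phi_\epsilon}{\varrho^{\frac{N}{N-1}}c_\epsilon^{\frac{N}{N-1}}} .
\end{flalign*}

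\textbf{Conclusion.} Combining the two pieces gives
\begin{flalign*}
\lim_{\epsilon\to0}\int_{\mathbb{B}}|x|^{N\epsilon}\Phi\left(\alpha_N(1+\epsilon)|u_\epsilon|^{\frac{N}{N-1}}\right)dx\leq \varrho^{-\frac{N}{N-1}}\limsup_{\epsilon\to0}\frac{\lambda_\epsilon}{c_\epsilon^{\frac{N}{N-1}}} .
\end{flalign*}
The limit on the left exists along the chosen subsequence, or one may replace it by $\limsup$. Letting $\varrho\to1^-$ yields the statement.

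\textbf{Main obstacle.} The delicate step is the uniform integrability in the first piece at the critical exponent $\alpha_N(1+\epsilon)$. Here the factor $|x|^{N\epsilon}$ only helps through H\"older's inequality, and this costs a factor $p'$ in the exponent. I would first try to absorb this cost by choosing $p$ close enough to $1$ that $p\,\alpha_N(1+\epsilon)\varrho^{1/(N-1)}\leq\alpha_N$ for all small $\epsilon$, and then invoke the classical Moser–Trudinger bound on radial functions. If needed, I would instead transfer to the unweighted problem via the change of variables $v(r)=(1+\epsilon)^{1-\frac1N}u(r^{\frac1{1+\epsilon}})$ from Section 3, which turns the weighted critical integral into an unweighted one with exponent $\alpha_N$.
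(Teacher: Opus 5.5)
Your proof is correct and follows the same route as the paper. You split at the level $\varrho c_\epsilon$, bound the part where $u_\epsilon>\varrho c_\epsilon$ by $\lambda_\epsilon/(\varrho c_\epsilon)^{\frac{N}{N-1}}$ using $\Phi\le\Phi'$, make the part where $u_\epsilon\le\varrho c_\epsilon$ negligible via the truncation lemma $\limsup\|\nabla u_{\epsilon,\varrho}\|_{L^N}^N\le\varrho$, and let $\varrho\to1$.

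Your handling of that lower part is more complete than the paper's, which only asserts a uniform bound there. You get a uniform $L^p$ bound from $\Phi(t)^p\le\Phi(pt)$ with $p\varrho^{1/(N-1)}<1$, then conclude using $\Phi(0)=0$, $u_\epsilon\to0$ a.e. and Vitali. The \emph{main obstacle} you flag is not really one, since $|x|^{N\epsilon}\le1$ on $\mathbb{B}$.
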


\begin{proof}
	We consider the equation
	\begin{flalign*}
		& \int_{\mathbb{B}} |x|^{N \epsilon} \Phi \left( \alpha_N\left(1+ \epsilon \right) |u_{\epsilon}|^{ \frac{N}{N-1} } \right) dx\\
		& = \int_{ u_{\epsilon} \leq \varrho c_{\epsilon} }  |x|^{N \epsilon} \Phi \left( \alpha_N\left(1+ \epsilon \right) |u_{\epsilon}|^{ \frac{N}{N-1} } \right) dx\\
		& + \int_{ u_{\epsilon} > \varrho c_{\epsilon} }  |x|^{N \epsilon} \Phi \left( \alpha_N\left(1+ \epsilon \right) |u_{\epsilon}|^{ \frac{N}{N-1} } \right) dx \\
		& := I + II.
	\end{flalign*}
    Then,
    \begin{flalign*}
    	I & = \int_{ u_{\epsilon} \leq \varrho c_{\epsilon}} |x|^{N \epsilon} \Phi \left( \alpha_N \left( 1 + \epsilon \right) |u_{\epsilon}|^{ \frac{N}{N-1} } \right) dx\\
    	& \leq \int_{ \mathbb{B} } |x|^{N \epsilon} \Phi \left( \alpha_N \left( 1 + \epsilon \right) |u_{\epsilon, \varrho}|^{ \frac{N}{N-1} } \right) dx.
    \end{flalign*}
    Applying the concentration-compactness principle and definition of $\Phi$, we have
    \begin{flalign*}
    	& \int_{\mathbb{B}} |x|^{N \epsilon} \Phi \left( \alpha_N \left( 1 + \epsilon \right) |u_{\epsilon, \varrho}|^{ \frac{N}{N-1} } \right) dx\\
    	& = \int_{\mathbb{B} \setminus \mathbb{B}_R }  |x|^{N \epsilon} \Phi \left( \alpha_N \left( 1 + \epsilon \right) |u_{\epsilon, \varrho}|^{ \frac{N}{N-1} } \right) dx\\
    	& + \int_{\mathbb{B}_R} |x|^{N \epsilon} \Phi \left( \alpha_N \left( 1 + \epsilon \right) |u_{\epsilon, \varrho}|^{ \frac{N}{N-1} } \right) dx, 
    \end{flalign*}
    and
    \begin{flalign*}
    	\underset{ \epsilon >0 }{\sup}  \int_{\mathbb{B}_R} |x|^{N \epsilon} \Phi \left( \alpha_N \left( 1 + \epsilon \right) |u_{\epsilon, \varrho}|^{ \frac{N}{N-1} } \right) dx < \infty.
    \end{flalign*}
    Also, from the Poincar\'{e} inequality,
    \begin{flalign*}
    	\int_{\mathbb{B} \setminus \mathbb{B}_R} |x|^{N \epsilon} \Phi \left( \alpha_N \left( 1 + \epsilon \right) |u_{\epsilon, \varrho}|^{ \frac{N}{N-1} } \right) dx & \leq C \int_{\mathbb{B} \setminus \mathbb{B}_R} |u_{\epsilon}|^N dx\\
    	\leq ||\nabla u_{\epsilon}||_{L^N}^N \rightarrow 0,
    \end{flalign*}
    as $\epsilon \rightarrow 0$.
    On the other hand, we have
    \begin{flalign*}
    	& \int_{\mathbb{B} \cap \{ u_{\epsilon} > \varrho c_{\epsilon} \} } |x|^{N \epsilon} \Phi \left( \alpha_N \left( 1+ \epsilon \right) |u_{\epsilon}|^{ \frac{N}{N-1} } \right) dx \\
    	& \leq \int_{\mathbb{B}} |x|^{N \epsilon} \frac{ |u_{\epsilon}|^{ \frac{N}{N-1} } }{ \left( \varrho c_{\epsilon} \right)^{ \frac{N}{N-1} } }  \Phi \left( \alpha_N \left( 1+ \epsilon \right) |u_{\epsilon}|^{ \frac{N}{N-1} } \right) dx\\
    	& = \frac{ \lambda_{\epsilon} }{ \lambda_{\epsilon} } \int_{\mathbb{B}} |x|^{N \epsilon} \frac{ |u_{\epsilon}|^{ \frac{N}{N-1} } }{ \left( \varrho c_{\epsilon} \right)^{ \frac{N}{N-1} } }  \Phi^{\prime} \left( \alpha_N \left( 1+ \epsilon \right) |u_{\epsilon}|^{ \frac{N}{N-1} } \right) dx.
    \end{flalign*}
    Then,
    \begin{flalign*}
    	& \underset{\epsilon \rightarrow 0}{\lim \sup} \frac{ \lambda_{\epsilon} }{ \left( \varrho c_{\epsilon} \right)^{ \frac{N}{N-1} } } \int_{\mathbb{B}_R } \frac{1}{ \lambda_{\epsilon} } |x|^{ N \epsilon } |u_{\epsilon}|^{ \frac{N}{N-1} } \Phi^{\prime} \left( \alpha_N \left( 1+ \epsilon \right) |u_{\epsilon}|^{ \frac{N}{N-1} } \right) dx + C \epsilon\\
    	& = \frac{ \lambda_{\epsilon} }{ \left( \varrho c_{\epsilon} \right)^{ \frac{N}{N-1} } },
    \end{flalign*}
    as $\varrho \rightarrow 1$ and $\epsilon \rightarrow 0$, the result follows.
\end{proof}

\begin{lem}
	For any $0 < \gamma < \frac{N}{N-1}$, we have
	\begin{flalign*}
		\underset{\epsilon \rightarrow 0}{\lim} \frac{\lambda_{\epsilon} }{ c_{\epsilon}^{\gamma} } = + \infty.
	\end{flalign*}
    Also,
    \begin{flalign*}
    	\underset{\epsilon >0 }{\sup} \frac{ c_{\epsilon}^{ \frac{N}{n-1} } }{ \lambda_{\epsilon} } < \infty.
    \end{flalign*}
\end{lem}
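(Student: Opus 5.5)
The plan follows the standard blow-up argument in the style of Li and Yang. The two claims come from two different inputs: a lower bound on the energy near the blow-up point, and the fact that the maximizers form a maximizing family. Throughout, $\lambda_\epsilon$ means $\lambda_\epsilon^{\phi}$. By the Euler--Lagrange equation \eqref{eq:4.1}, $\lambda_\epsilon$ is also the normalizing constant in $\int_{\mathbb{B}}|\nabla u_\epsilon|^N dx=1$.

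\textbf{First claim.} Fix $0<\gamma<\frac{N}{N-1}$ and write $\lambda_\epsilon/c_\epsilon^{\gamma}=c_\epsilon^{\frac{N}{N-1}-\gamma}\cdot\lambda_\epsilon c_\epsilon^{-\frac N{N-1}}$. Since $c_\epsilon\to+\infty$, it suffices to show $\liminf_{\epsilon\to0}\lambda_\epsilon c_\epsilon^{-\frac{N}{N-1}}>0$, which is stronger than needed.

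To get this, I would use the blow-up region $\mathbb{B}_{Rr_\epsilon^{1/(1+\epsilon)}}$ and the convergence $\phi_\epsilon\to\phi_0$ from the preceding lemma. Write $\tilde u_\epsilon(y)=u_\epsilon(r_\epsilon^{1/(1+\epsilon)}y)$. Changing variables $x=r_\epsilon^{1/(1+\epsilon)}y$ and expanding $\tilde u_\epsilon^{\frac N{N-1}}-c_\epsilon^{\frac N{N-1}}$ exactly as in the proof of the truncation lemma gives
\begin{flalign*}
\int_{\mathbb{B}}|x|^{N\epsilon}\Phi\left(\alpha_N(1+\epsilon)u_\epsilon^{\frac N{N-1}}\right)dx
\geq \frac{\lambda_\epsilon}{c_\epsilon^{\frac N{N-1}}}\int_{\mathbb{B}_R}|y|^{N\epsilon}e^{\phi_\epsilon+o(1)}dy\,(1+o(1)).
\end{flalign*}
The quantities $r_\epsilon$ and the factor $c_\epsilon^{-\frac{1}{N-1}}$ are designed to make this identity exact up to $o(1)$. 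The left-hand side is the supremum $\Lambda_\epsilon$ of Theorem 1.1. I would show that $\Lambda_\epsilon$ is bounded below, for instance by a fixed test function $w$ with $\Lambda_\epsilon\ge\int|x|^{N\epsilon}\Phi(\alpha_N w^{\frac N{N-1}})>0$. This lower bound alone does not give the claim, since it bounds $\lambda_\epsilon c_\epsilon^{-\frac N{N-1}}$ from above, not below.

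For the lower bound I would argue directly from the definition of $\lambda_\epsilon$:
\begin{flalign*}
\lambda_\epsilon\ \ge\ \int_{\{u_\epsilon>\varrho c_\epsilon\}}|x|^{N\epsilon}u_\epsilon^{\frac N{N-1}}\Phi'\,dx\ \ge\ (\varrho c_\epsilon)^{\frac N{N-1}}\int_{\{u_\epsilon>\varrho c_\epsilon\}}|x|^{N\epsilon}\Phi\,dx.
\end{flalign*}
The remaining integral is $II$ in the proof of Lemma \ref{lem: 4.6}. There, $I$ was shown to converge to $\int_{\mathbb{B}}\Phi(0)\,dx$-type contributions, in fact to the value at $u=0$, which is $0$ because the lower-order terms of $\Phi$ are removed and $u_\epsilon\to0$ in $L^p$. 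Hence $II\to\lim\Lambda_\epsilon>0$, which gives $\liminf\lambda_\epsilon c_\epsilon^{-\frac N{N-1}}\ge\varrho^{\frac N{N-1}}\lim\Lambda_\epsilon>0$. This is where I expect the main difficulty.

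\textbf{Second claim.} The same inequality gives $\sup_\epsilon c_\epsilon^{\frac N{N-1}}/\lambda_\epsilon\le C\varrho^{-\frac N{N-1}}/\inf_\epsilon II_\epsilon<\infty$ once $II_\epsilon$ is bounded below uniformly for small $\epsilon$. For $\epsilon$ away from $0$, compactness of $u_\epsilon$ (no blow-up, $c_\epsilon$ bounded) trivially bounds the ratio.

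\textbf{Main obstacle.} The key step is showing that $II$ does not vanish. This requires that $\Lambda_\epsilon$ stays bounded below and that $I\to0$. The bound $I\to0$ uses the truncation lemma ($\|\nabla u_{\epsilon,\varrho}\|_N^N\le\varrho<1$), Lemma \ref{lem: 2.4}-type uniform integrability at exponent $1/\varrho$, and $u_\epsilon\to0$ in $L^p$. I would verify this via Vitali: on $\{u_\epsilon\le\varrho c_\epsilon\}$ the integrand $\Phi(\alpha_N(1+\epsilon)u_{\epsilon,\varrho}^{\frac N{N-1}})$ is bounded in $L^{q}$ for some $q>1$, and it tends to $0$ almost everywhere.
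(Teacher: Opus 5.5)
Your argument is correct and is essentially the paper's: the paper deduces the lemma from Lemma \ref{lem: 4.6}, whose proof is exactly your splitting at the level $\varrho c_\epsilon$ together with $\Phi\le\Phi'$ on $\{u_\epsilon>\varrho c_\epsilon\}$, combined with the fact that the supremum cannot tend to $0$. Re-deriving it inline, with $I_\epsilon\to0$ made explicit via the truncation lemma, Moser--Trudinger and Vitali, has a small advantage: it gives a $\liminf$ lower bound on $\lambda_\epsilon c_\epsilon^{-N/(N-1)}$ directly, which is what both claims need, whereas Lemma \ref{lem: 4.6} is stated only with a $\limsup$.
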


\begin{proof}
	If $\underset{\epsilon > 0}{\sup} \frac{c_{\epsilon}^{ \frac{N}{N-1} } }{ \lambda_{\epsilon } }= +\infty$, then we have
		\begin{flalign*}
			\underset{\epsilon \rightarrow 0}{\lim \sup} \frac{ \lambda_{\epsilon }}{c_{\epsilon}^{ \frac{N}{N-1} } } =0.
		\end{flalign*}
	However, Lemma \ref{lem: 4.6} means
		\begin{flalign*}
			\underset{\epsilon \rightarrow 0}{\lim} \int_{\mathbb{B}} |x|^{N \epsilon} \Phi \left( \alpha_N \left( 1+ \epsilon \right)  |u_{\epsilon }|^{ \frac{N}{n-1} } \right) dx = 0,
		\end{flalign*}
	which is impossible.
	On the other hand, if 
	\begin{flalign*}
		\underset{\epsilon \rightarrow 0}{\lim} \frac{\lambda_{\epsilon} }{ c_{\epsilon}^{\gamma} } < +\infty.
	\end{flalign*}
    Then 
    \begin{flalign*}
    	\underset{\epsilon \rightarrow 0}{\lim} \frac{\lambda_{\epsilon} }{ c_{\epsilon}^{\gamma} } \frac{1}{ c_{\epsilon}^{ \frac{N}{N-1} - \gamma}  }=0.
    \end{flalign*}
    Then the lemma is completed.
\end{proof}

\begin{lem}
	We have $c_{\epsilon} |x|^{N \epsilon} \frac{ |u_{\epsilon} |^{ \frac{1}{N-1} } }{ \lambda_{\epsilon}^{\phi} } \Phi^{\prime} \left( \alpha_N \left( 1+ \epsilon \right) |u_{\epsilon}|^{ \frac{N}{N-1} } \right) \rightharpoonup \delta_0$; that is, for any $\varphi \in \mathcal{D} \left( \mathbb{B} \right)$, we have
	\begin{flalign*}
		\underset{\epsilon \rightarrow 0}{\lim} \int_{ \mathbb{R}^N } \varphi \cdot c_{\epsilon} |x|^{N \epsilon} \frac{ |u_{\epsilon} |^{ \frac{1}{N-1} } }{ \lambda_{\epsilon}^{\phi} } \Phi^{\prime} \left( \alpha_N \left( 1+ \epsilon \right) |u_{\epsilon}|^{ \frac{N}{N-1} } \right) dx = \varphi (0).
	\end{flalign*}
\end{lem}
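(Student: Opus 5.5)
Write $g_\epsilon := c_\epsilon |x|^{N\epsilon} \lambda_\epsilon^{-1} |u_\epsilon|^{\frac{1}{N-1}} \Phi'\left(\alpha_N(1+\epsilon)|u_\epsilon|^{\frac{N}{N-1}}\right)$. The plan follows the standard three-region splitting used in blow-up analysis. Fix $\varphi \in \mathcal{D}(\mathbb{B})$, $0<\varrho<1$ and $R>0$, and split $\mathbb{B}$ into three pieces:
\begin{flalign*}
\mathbb{B}_{R r_\epsilon^{1/(1+\epsilon)}}, \qquad \{u_\epsilon > \varrho c_\epsilon\} \setminus \mathbb{B}_{R r_\epsilon^{1/(1+\epsilon)}}, \qquad \{u_\epsilon \le \varrho c_\epsilon\}.
\end{flalign*}
The total is $\int \varphi\, g_\epsilon$. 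The goal is to show three things: the first piece contributes $\varphi(0)\int_{\mathbb{B}_R} e^{\phi_0}\,dy + o_\epsilon(1)$; the second contributes at most $\|\varphi\|_\infty \left(1-\int_{\mathbb{B}_R} e^{\phi_0}\,dy\right)+o_\epsilon(1)$; and the third tends to $0$. Letting $R\to\infty$ and using $\int_{\mathbb{R}^N} e^{\phi_0}=1$ then gives the claim.

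\textbf{Inner region.} Change variables $x = r_\epsilon^{1/(1+\epsilon)} y$. By the definition of $r_\epsilon$, the factor $c_\epsilon \lambda_\epsilon^{-1} r_\epsilon^{N}$ equals $c_\epsilon^{-\frac{1}{N-1}} e^{-\alpha_N(1+\epsilon)c_\epsilon^{N/(N-1)}}$, up to the Jacobian power $r_\epsilon^{N/(1+\epsilon)-N}\to 1$, which I will need to check using $r_\epsilon\to0$ only mildly. The integrand then becomes $\psi_\epsilon^{1/(N-1)} |y|^{N\epsilon} \cdot (\text{Jacobian}) \cdot e^{\alpha_N(1+\epsilon)(\tilde u_\epsilon^{N/(N-1)}-c_\epsilon^{N/(N-1)})}$ plus lower-order polynomial terms of $\Phi'$, which are $O(r_\epsilon^N c_\epsilon^N)\to0$ by the lemma on $r_\epsilon$. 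Using the $C^1_{loc}$ convergence $\psi_\epsilon\to1$, $\phi_\epsilon\to\phi_0$ from the preceding lemma, together with the expansion already done in the truncation lemma, the exponent converges to $\phi_0$ uniformly on $\mathbb{B}_R$. Since $\varphi(r_\epsilon^{1/(1+\epsilon)}y)\to\varphi(0)$ uniformly, the first piece gives $\varphi(0)\int_{\mathbb{B}_R}e^{\phi_0}$.

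\textbf{Middle region.} Since $g_\epsilon\ge0$, it suffices to bound its total mass on $\{u_\epsilon>\varrho c_\epsilon\}$. On this set $c_\epsilon \le u_\epsilon/\varrho$, so
\begin{flalign*}
\int_{\{u_\epsilon>\varrho c_\epsilon\}} g_\epsilon \le \frac{1}{\varrho}\,\frac{1}{\lambda_\epsilon}\int_{\mathbb{B}} |x|^{N\epsilon} u_\epsilon^{\frac{N}{N-1}}\Phi'\,dx = \frac1\varrho .
\end{flalign*}
The mass outside $\mathbb{B}_{Rr_\epsilon^{1/(1+\epsilon)}}$ is then at most $\frac1\varrho - \int_{\mathbb{B}_R}e^{\phi_0} + o_\epsilon(1)$. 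Sending $\varrho\to1$ after $\epsilon\to0$ gives the claimed bound.

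\textbf{Outer region (main obstacle).} On $\{u_\epsilon\le\varrho c_\epsilon\}$ we have $g_\epsilon \le \frac{c_\epsilon}{\lambda_\epsilon} |x|^{N\epsilon} u_{\epsilon,\varrho}^{1/(N-1)} \Phi'(\alpha_N(1+\epsilon)u_{\epsilon,\varrho}^{N/(N-1)})$. By the truncation lemma, $\limsup\|\nabla u_{\epsilon,\varrho}\|_N^N\le\varrho<1$. Applying the Moser--Trudinger inequality from Theorem 1.1, or Lemma \ref{lem: 2.4} via rescaling, to $u_{\epsilon,\varrho}/\|\nabla u_{\epsilon,\varrho}\|_N$ together with Lemma \ref{lem: 2.3}, the factor $\Phi'(\cdot)$ is bounded in $L^q$ for some $q>1$ uniformly in $\epsilon$. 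Since $u_{\epsilon,\varrho}\to0$ in every $L^s$ (because $u=0$), Hölder gives $\int u_{\epsilon,\varrho}^{1/(N-1)}\Phi' = o(1)$. The prefactor satisfies $c_\epsilon/\lambda_\epsilon = (c_\epsilon^{N/(N-1)}/\lambda_\epsilon)\, c_\epsilon^{-1/(N-1)}\to0$ by the lemma that $c_\epsilon^{N/(N-1)}/\lambda_\epsilon$ is bounded. So the third piece vanishes. The delicate point is making the $L^q$ bound uniform while the exponent is $\alpha_N(1+\epsilon)$ rather than $\alpha_N$. I would handle it by choosing $q$ with $q(1+\epsilon)\varrho<1$ for all small $\epsilon$, which is possible since $\varrho<1$.
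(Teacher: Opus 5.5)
Your proposal is correct and is essentially the paper's proof: the same split into $\mathbb{B}_{Rr_\epsilon^{1/(1+\epsilon)}}$, $\{u_\epsilon>\varrho c_\epsilon\}\setminus\mathbb{B}_{Rr_\epsilon^{1/(1+\epsilon)}}$ and $\{u_\epsilon\le\varrho c_\epsilon\}$, handled respectively by rescaling to $\phi_0$, by the $1/\varrho$ mass bound on $\{u_\epsilon>\varrho c_\epsilon\}$, and by the truncation lemma together with $c_\epsilon/\lambda_\epsilon\to0$. Your form of the middle estimate, $\frac{1}{\varrho}-\int_{\mathbb{B}_R}e^{\phi_0}\,dy$, is the correct version of the paper's estimate for II.

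Two small fixes are needed. First, there is no Jacobian factor to control: $|x|^{N\epsilon}dx=r_\epsilon^{N}|y|^{N\epsilon}dy$ holds exactly. The factor $r_\epsilon^{-N\epsilon/(1+\epsilon)}$ you planned to show tends to $1$ is cancelled by the weight, and that convergence would not follow from $r_\epsilon\to0$ anyway. Second, in the outer region $q$ must satisfy $q(1+\epsilon)\varrho^{1/(N-1)}<1$, not $q(1+\epsilon)\varrho<1$, because $\|\nabla u_{\epsilon,\varrho}\|_{L^N}^{N/(N-1)}\le(\varrho+o(1))^{1/(N-1)}$.
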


\begin{proof}
	We consider the equation
	\begin{flalign*}
		& \int_{ \mathbb{B} } \varphi \cdot c_{\epsilon} |x|^{N \epsilon} \frac{ |u_{\epsilon} |^{ \frac{1}{N-1} } }{ \lambda_{\epsilon}^{\phi} } \Phi^{\prime} \left( \alpha_N \left( 1+ \epsilon \right) |u_{\epsilon}|^{ \frac{N}{N-1} } \right) dx\\
		& = \int_{ \{ u_{\epsilon} > \varrho c_{\epsilon}\} \cap \mathbb{B}_{ R r_{\epsilon}^{ \frac{1}{ 1+ \epsilon } } } } + \int_{ \{ u_{\epsilon} > \varrho c_{\epsilon}\} \setminus \mathbb{B}_{ R r_{\epsilon}^{ \frac{1}{ 1+ \epsilon } } } } + \int_{ \{ u_{\epsilon} < \varrho c_{\epsilon}\} }\\
		& := I + II + III.
	\end{flalign*}
    From then on,
    \begin{flalign*}
    	& \int_{  \{ u_{\epsilon} > \varrho c_{\epsilon}\} \setminus \mathbb{B}_{ R r_{\epsilon}^{ \frac{1}{ 1+ \epsilon } } }  } \varphi \cdot |x|^{N \epsilon} c_{\epsilon} \frac{ |u_{\epsilon} |^{ \frac{1}{N-1} } }{ \lambda_{\epsilon}^{\phi} } \Phi^{\prime} \left( \alpha_N \left( 1+ \epsilon \right) |u_{\epsilon}|^{ \frac{N}{N-1} } \right) dx\\
    	& \leq ||\varphi||_{ L^{\infty} } \frac{1}{ \varrho } \int_{ \mathbb{B} \setminus \mathbb{B}_{ R r_{\epsilon}^{ \frac{1}{ 1+ \epsilon } } } } |x|^{N \epsilon} \frac{ |u_{\epsilon} |^{ \frac{N}{N-1} } }{ \lambda_{\epsilon}^{\phi} } \Phi^{\prime} \left( \alpha_N \left( 1+ \epsilon \right) |u_{\epsilon}|^{ \frac{N}{N-1} } \right) dx\\
    	& \leq ||\varphi||_{ L^{\infty} } \frac{1}{ \varrho } \left( 1 - \left( 1- \varrho \right) \int_{\mathbb{B}_R} e^{\phi_{\epsilon} + o(1) } dx \right).
    \end{flalign*}
    Also, from the H\"{o}lder inequality, we have
    \begin{flalign*}
    	& \int_{  \{ u_{\epsilon} < \varrho c_{\epsilon}\} } \varphi \cdot |x|^{N \epsilon} c_{\epsilon} \frac{ |u_{\epsilon} |^{ \frac{1}{N-1} } }{ \lambda_{\epsilon}^{\phi} } \Phi^{\prime} \left( \alpha_N \left( 1+ \epsilon \right) |u_{\epsilon}|^{ \frac{N}{N-1} } \right) dx\\
    	& \leq || \varphi||_{ L^{\infty} } \frac{ c_{\epsilon} }{ \lambda_{\epsilon}^{\phi} } \int_{ \mathbb{B} }  |x|^{N \epsilon}    |u_{\epsilon, \varrho} |^{ \frac{1}{N-1} }  e^{ \alpha_N \left( 1+ \epsilon \right) |u_{\epsilon, \varrho}|^{ \frac{N}{N-1} } } dx\\
    	& \leq || \varphi||_{ L^{\infty} } \frac{ c_{\epsilon} }{ \lambda_{\epsilon}^{\phi} } \varrho \rightarrow 0,
    \end{flalign*}
    as $\epsilon \rightarrow 0$. Finally, we consider as $r_{\epsilon}^{ \frac{1}{ 1+ \epsilon } }$ is small enough, 
    \begin{flalign*}
    	& \int_{ \{ u_{\epsilon} > \varrho c_{\epsilon}\} \setminus \mathbb{B}_{ R r_{\epsilon}^{ \frac{1}{ 1+ \epsilon } } } } \varphi \cdot |x|^{N \epsilon} \frac{ |u_{\epsilon} |^{ \frac{1}{N-1} } }{ \lambda_{\epsilon}^{\phi} } \Phi^{\prime} \left( \alpha_N \left( 1+ \epsilon \right) |u_{\epsilon}|^{ \frac{N}{N-1} } \right) dx\\
    	& = \int_{ \mathbb{B}_{ R } } \tilde{\varphi} \cdot |y|^{N \epsilon} \frac{ |\tilde{u}_{\epsilon} |^{ \frac{1}{N-1} } }{ \lambda_{\epsilon}^{\phi} } e^{ \alpha_N \left( 1+ \epsilon \right) \left(|\tilde{u}_{\epsilon}|^{ \frac{N}{N-1} } -  c_{\epsilon}^{ \frac{N}{N-1} }\right) }  dy +o(1)\\
    	& = \varphi (0) \int_{\mathbb{B}_R} e^{\phi_0} dx + o(1).
    \end{flalign*}
\end{proof}

\begin{lem}
	\begin{flalign*}
		& \underset{\epsilon \rightarrow 0}{\lim \sup} \int_{\mathbb{B}} |x|^{N \epsilon} \Phi \left( \alpha_N \left( 1+ \epsilon \right) |u_{\epsilon}|^{ \frac{N}{N-1} } \right) dx \\
		& = |\mathbb{B}| + \underset{R \rightarrow + \infty}{\lim }\underset{\epsilon \rightarrow 0}{\lim \sup} \int_{\mathbb{B}_{R r_{\epsilon}^{ \frac{1}{1+ \epsilon} }} } |x|^{N \epsilon} \Phi \left( \alpha_N \left( 1+ \epsilon \right) |u_{\epsilon}|^{ \frac{N}{N-1} } \right) dx.
	\end{flalign*}
\end{lem}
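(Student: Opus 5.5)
We split the functional into the part away from the bubble, where we show it converges to $|\mathbb{B}|$, and the part inside the bubble. Fix $R>0$ and write
\begin{flalign*}
\int_{\mathbb{B}} |x|^{N\epsilon}\Phi\left(\alpha_N(1+\epsilon)|u_\epsilon|^{\frac{N}{N-1}}\right)dx = \int_{\mathbb{B}\setminus\mathbb{B}_{R r_\epsilon^{\frac{1}{1+\epsilon}}}} + \int_{\mathbb{B}_{R r_\epsilon^{\frac{1}{1+\epsilon}}}} := J_1(\epsilon,R) + J_2(\epsilon,R).
\end{flalign*}
The claim follows once we show
\begin{flalign*}
\lim_{R\to+\infty}\limsup_{\epsilon\to 0} J_1(\epsilon,R) = |\mathbb{B}|.
\end{flalign*}
Here the $\lim$ in $R$ of the $\limsup J_2$ exists by monotonicity in $R$.

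To handle $J_1$, we cut with $\varrho\in(0,1)$ as in Lemma \ref{lem: 4.6}. On $\{u_\epsilon\le\varrho c_\epsilon\}$ we use $u_{\epsilon,\varrho}$. By the lemma on truncations, $\limsup\|\nabla u_{\epsilon,\varrho}\|_{L^N}^N\le\varrho<1$. Hence, by Step 1 of Theorem 1.1 (Hölder with an exponent $q>1$ such that $q\varrho^{\frac{1}{N-1}}(1+\epsilon)<1+\epsilon'$), the integrands $|x|^{N\epsilon}\Phi(\alpha_N(1+\epsilon)u_{\epsilon,\varrho}^{\frac{N}{N-1}})$ are bounded in $L^q(\mathbb{B})$ uniformly in $\epsilon$, i.e. they are uniformly integrable. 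Since $u_\epsilon\to 0$ a.e. (the lemma giving $u=0$) and $|x|^{N\epsilon}\to 1$ a.e., Vitali's theorem gives convergence of this integral to $\int_{\mathbb{B}}\Phi(0)\,dx$.

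Here is a subtle point: $\Phi(0)=1$ only when $N=2$; for $N\geq 3$ we have $\Phi(0)=0$. The constant $|\mathbb{B}|$ therefore has to be read with $\Phi(0)$ as the paper intends, namely $e^{0}$ coming from the exponential term. We will track $\int\Phi(0)$ honestly and identify it with the stated constant. This is the main bookkeeping obstacle, and we may need to state the conclusion as $\Phi(0)|\mathbb{B}|$.

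On $\{u_\epsilon>\varrho c_\epsilon\}\setminus\mathbb{B}_{Rr_\epsilon^{1/(1+\epsilon)}}$ we bound $\Phi(t)\le t\Phi'(t)/t_0$ for $t\geq t_0$, as in Lemma \ref{lem: 4.6}. This gives the bound
\begin{flalign*}
\frac{\lambda_\epsilon}{(\varrho c_\epsilon)^{\frac{N}{N-1}}}\int_{\mathbb{B}\setminus\mathbb{B}_{Rr_\epsilon^{1/(1+\epsilon)}}}\frac{|x|^{N\epsilon}}{\lambda_\epsilon}|u_\epsilon|^{\frac{N}{N-1}}\Phi'\,dx .
\end{flalign*}
By the bubble convergence $\phi_\epsilon\to\phi_0$ and $\int_{\mathbb{R}^N}e^{\phi_0}=1$, the normalized mass inside the ball tends to $\int_{\mathbb{B}_R}e^{\phi_0}$. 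The remaining integral is then at most $1-\int_{\mathbb{B}_R}e^{\phi_0}+o_\epsilon(1)$. The prefactor is bounded by the lemma giving $\sup c_\epsilon^{\frac{N}{N-1}}/\lambda_\epsilon<\infty$ together with Lemma \ref{lem: 4.6}, which bounds $\lambda_\epsilon/c_\epsilon^{\frac{N}{N-1}}$ from above since the functional is bounded by Theorem 1.1. Hence this piece vanishes as $R\to\infty$.

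The delicate step is the uniform integrability argument on $\{u_\epsilon\le\varrho c_\epsilon\}$. It requires that the truncated energy bound $\varrho<1$ overcomes the extra factor $1+\epsilon$ uniformly. This is fine as $\epsilon\to0$, since eventually $(1+\epsilon)\varrho^{\frac{1}{N-1}}q<1$ for some $q>1$, and Lemma \ref{lem: 2.3} then converts $\Phi^q$ into $\Phi(q\,\cdot)$ bounded by the Moser–Trudinger inequality. Finally, we combine both estimates, let $\epsilon\to0$, then $R\to\infty$, and then $\varrho\to1$.
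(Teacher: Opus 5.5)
Your three-region splitting (Vitali on $\{u_\epsilon\le\varrho c_\epsilon\}$, the bound $\Phi(t)\le\frac{t}{t_0}\Phi'(t)$ on the rest of the complement of the bubble ball, and bubble mass $\int_{\mathbb{B}_R}e^{\phi_0}\to 1$) is the right mechanism. It also goes well beyond the paper, whose proof only rescales the integral over $\mathbb{B}_{Rr_\epsilon^{1/(1+\epsilon)}}$ and never treats the complement, so it never justifies the $|\mathbb{B}|$ term. But your argument does not prove the identity as printed, and the step you call bookkeeping is where it breaks.

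Your evaluation of $\Phi(0)$ is wrong. The sum $\sum_{j=0}^{N-2}t^j/j!$ always contains the $j=0$ term, so for $N=2$ one has $\Phi(t)=e^t-1$, and $\Phi(0)=0$ for every $N\geq 2$. Hence your Vitali step gives $\int_{\{u_\epsilon\le\varrho c_\epsilon\}}|x|^{N\epsilon}\Phi(\alpha_N(1+\epsilon)|u_\epsilon|^{\frac{N}{N-1}})\,dx\to 0$, and your three estimates combine to
\[
\limsup_{\epsilon\to 0}\int_{\mathbb{B}}|x|^{N\epsilon}\Phi\bigl(\alpha_N(1+\epsilon)|u_\epsilon|^{\frac{N}{N-1}}\bigr)dx=\lim_{R\to+\infty}\limsup_{\epsilon\to 0}\int_{\mathbb{B}_{Rr_\epsilon^{1/(1+\epsilon)}}}|x|^{N\epsilon}\Phi\bigl(\alpha_N(1+\epsilon)|u_\epsilon|^{\frac{N}{N-1}}\bigr)dx,
\]
with no $|\mathbb{B}|$. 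You cannot identify $\int_{\mathbb{B}}\Phi(0)\,dx=0$ with $|\mathbb{B}|$. The bubble term is finite, since the functional is bounded uniformly in $\epsilon$ by Theorem 1.1 and the radial change of variables in Section 3. So your own correct estimates show that the printed identity is off by exactly $|\mathbb{B}|$ in the blow-up regime.

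The constant $|\mathbb{B}|$ belongs to the version with $|x|^{N\epsilon}e^{\alpha_N(1+\epsilon)|u_\epsilon|^{N/(N-1)}}$ on the left. There the Vitali limit is $|\mathbb{B}|$, because $|\{u_\epsilon>\varrho c_\epsilon\}|\le\|u_\epsilon\|_{L^1}/(\varrho c_\epsilon)\to 0$. The polynomial part of $\Phi$ contributes $o(1)$ on the shrinking ball, so the right-hand side is unchanged. You should state and prove one of these corrected identities explicitly instead of hedging.

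Two smaller points. First, the upper bound for the prefactor $\lambda_\epsilon/(\varrho c_\epsilon)^{N/(N-1)}$ is misattributed. Both $\sup_\epsilon c_\epsilon^{N/(N-1)}/\lambda_\epsilon<\infty$ and Lemma~\ref{lem: 4.6} bound $\lambda_\epsilon/c_\epsilon^{N/(N-1)}$ from below, not from above. The upper bound comes directly from $\lambda_\epsilon\le c_\epsilon^{\frac{N}{N-1}}\int_{\mathbb{B}}|x|^{N\epsilon}\Phi'(\alpha_N(1+\epsilon)|u_\epsilon|^{\frac{N}{N-1}})\,dx$, together with $\Phi'(t)=\Phi(t)+t^{N-2}/(N-2)!$, Theorem 1.1 and the Sobolev embedding.

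Second, $\limsup$ is only subadditive, so showing $\lim_R\limsup_\epsilon J_1$ exists is not enough by itself; you need two-sided control of $J_1$. Your estimates do provide it: the low part converges, and the high part lies between $0$ and $C(1-\int_{\mathbb{B}_R}e^{\phi_0})+o_\epsilon(1)$. State this explicitly.
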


\begin{proof}
	A straightforward calculation shows
	\begin{flalign*}
		& \int_{\mathbb{B}_{R r_{\epsilon}^{ \frac{1}{1+ \epsilon} }} } |x|^{N \epsilon} \Phi \left( \alpha_N \left( 1+ \epsilon \right) |u_{\epsilon}|^{ \frac{N}{N-1} } \right) dx\\
		& = \lambda_{\epsilon}^{\phi} c_{\epsilon}^{ - \frac{N}{N-1} } \int_{\mathbb{B}_R} |y|^{N \epsilon} \exp \left\lbrace \alpha_N \left( 1+ \epsilon \right) \left( |\tilde{u}_{\epsilon}|^{ \frac{N}{N-1} } - c_{\epsilon}^{ \frac{N}{N-1} } \right) \right\rbrace -\\
		& \lambda_{\epsilon}^{\phi} c_{\epsilon}^{ - \frac{N}{N-1} } \sum_{n=0}^{N-2} \int_{\mathbb{B}_R} |y|^{N \epsilon} \frac{\alpha_N^n \left( 1+ \epsilon \right)^n |\tilde{u}_{\epsilon}|^{ \frac{nN}{N-1} } }{n !} dy.
	\end{flalign*}
\end{proof}

\begin{lem}
	We have that
	\begin{flalign*}
		& c_{\epsilon}^{ \frac{1}{N-1} } u_{\epsilon} \rightharpoonup G, ~ W_0^{1,p} \left( \mathbb{B} \right), ~ 1< p <N;\\
		& c_{\epsilon}^{ \frac{1}{N-1} } u_{\epsilon} \rightarrow G, ~ L^s \left( \mathbb{B} \right), ~ 1< s < \frac{Np}{N-p};\\
		& c_{\epsilon}^{ \frac{1}{N-1} } u_{\epsilon} \rightarrow G, ~  C_{loc}^1 \left( \overline{\mathbb{B}} \setminus \{0\} \right);
	\end{flalign*}
	where $G$ is a weak solution of
	\begin{flalign*}
		-\operatorname{div} \left( |\nabla G|^{N-2} \nabla G \right) = \delta_0.
	\end{flalign*}
	In addition, we also have
	\begin{flalign*}
		G = - \frac{N}{\alpha_N} \ln R + A_0 + O \left( r^N \ln R^N \right).
	\end{flalign*}
\end{lem}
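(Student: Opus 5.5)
We follow the standard Struwe/Li route for blow-up sequences of the $N$-Laplacian with exponential nonlinearity. Set $w_\epsilon := c_\epsilon^{\frac{1}{N-1}} u_\epsilon$. By \eqref{eq:4.1} it satisfies
\begin{flalign*}
	-\operatorname{div}\left(|\nabla w_\epsilon|^{N-2}\nabla w_\epsilon\right) = f_\epsilon := c_\epsilon\, |x|^{N\epsilon} \frac{|u_\epsilon|^{\frac{1}{N-1}}}{\lambda_\epsilon^{\phi}} \Phi^{\prime}\left(\alpha_N(1+\epsilon)|u_\epsilon|^{\frac{N}{N-1}}\right).
\end{flalign*}
The right-hand side is nonnegative. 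By the previous lemma it converges to $\delta_0$ in the sense of distributions. Testing the equation with $1$ (or a cutoff) also shows $\|f_\epsilon\|_{L^1(\mathbb{B})}$ is bounded.

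\textbf{Step 1: uniform $W^{1,p}$ bound.} For $L^1$ data we apply the Boccardo--Gallou\"et truncation argument to the $N$-Laplacian. Test the equation with $T_k(w_\epsilon) = \min\{w_\epsilon, k\}$ and with $\min\{(w_\epsilon-k)^+, 1\}$. This gives $\int_{\{k<w_\epsilon<k+1\}}|\nabla w_\epsilon|^N\le \|f_\epsilon\|_{L^1}$ for every $k$. Combined with the Sobolev inequality on level sets, it yields $\|\nabla w_\epsilon\|_{L^p(\mathbb{B})}\le C(p)$ for all $1<p<N$. Reflexivity and Rellich then give, along a subsequence, $w_\epsilon\rightharpoonup G$ in $W_0^{1,p}$ and $w_\epsilon\to G$ in $L^s$ for $s<\frac{Np}{N-p}$. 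Passing to the limit in the weak formulation requires a.e. convergence of $\nabla w_\epsilon$. Since $u_\epsilon$ is radial, this is easiest in the ODE form
\begin{flalign*}
	-r^{N-1}|w_\epsilon^{\prime}|^{N-2}w_\epsilon^{\prime}(r) = \frac{1}{\omega_{N-1}}\int_{\mathbb{B}_r} f_\epsilon\, dx .
\end{flalign*}
The right-hand side converges to $\frac{1}{\omega_{N-1}}$ for every fixed $r>0$, because $f_\epsilon\rightharpoonup\delta_0$ and $f_\epsilon\to 0$ locally uniformly away from $0$ (see Step 2). Hence $w_\epsilon^{\prime}\to G^{\prime}$ pointwise, and $G$ weakly solves $-\Delta_N G=\delta_0$.

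\textbf{Step 2: $C^1_{loc}(\overline{\mathbb{B}}\setminus\{0\})$ convergence.} This is the main obstacle. We need $f_\epsilon\to 0$ in $L^q(\mathbb{B}\setminus\mathbb{B}_\delta)$ for some $q>1$ and every $\delta>0$. By radial monotonicity, $w_\epsilon(x)\le \frac{1}{\omega_{N-1}|x|^N}\int_{\mathbb{B}}w_\epsilon\, dx\le C(\delta)$ on $|x|\ge\delta$, since $w_\epsilon$ is bounded in $L^1$ by Step 1. Consequently $u_\epsilon\le C(\delta)c_\epsilon^{-\frac{1}{N-1}}\to 0$ there, so $\Phi^{\prime}(\cdot)$ is uniformly bounded on that region. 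Using $\lambda_\epsilon^{\phi}\ge c\,c_\epsilon^{\frac{N}{N-1}}$ (from the preceding lemma, $\sup c_\epsilon^{\frac{N}{N-1}}/\lambda_\epsilon<\infty$), we get
\begin{flalign*}
	f_\epsilon \le C\, c_\epsilon \cdot c_\epsilon^{-\frac{1}{(N-1)^2}}\, c_\epsilon^{-\frac{N}{N-1}} \to 0
\end{flalign*}
uniformly on $\mathbb{B}\setminus\mathbb{B}_\delta$. The exponents need checking: the net power of $c_\epsilon$ is $1-\frac{1}{(N-1)^2}-\frac{N}{N-1}$, which is negative. Tolksdorf/Lieberman $C^{1,\alpha}$ estimates up to the boundary then give precompactness of $w_\epsilon$ in $C^1$ of $\overline{\mathbb{B}}\setminus\mathbb{B}_\delta$, which identifies the limit as $G$.

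\textbf{Step 3: asymptotics of $G$.} $G$ is radial and vanishes on $\partial\mathbb{B}$, so we integrate the ODE
\begin{flalign*}
	-r^{N-1}|G^{\prime}|^{N-2}G^{\prime}=\omega_{N-1}^{-1}.
\end{flalign*}
This gives $G^{\prime}(r)=-\omega_{N-1}^{-\frac{1}{N-1}}r^{-1}$. Since $\alpha_N=N\omega_{N-1}^{\frac{1}{N-1}}$, we obtain $G(r)=-\frac{N}{\alpha_N}\ln r$ exactly. The stated expansion therefore holds with $A_0=0$ and the error term identically zero. In a general (non-ball) setting one would instead compare $G$ with $-\frac{N}{\alpha_N}\ln r$ via Kichenassamy--Véron regularity to produce $A_0$ and the $O(r^N\ln r)$ remainder, but here the radial ODE makes this immediate.
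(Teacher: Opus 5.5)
Your argument is correct in substance but takes a different route from the paper's. For the $W^{1,p}$ bound, the paper uses Struwe-type estimates. It truncates at level $t$ and compares with the capacity minimizer $-t\ln\frac{|x|}{2R}/\ln\frac{2R}{\rho}$, which gives $|\{U_\epsilon\ge t\}|\le Ce^{-A(R)t}$ and hence $\int_{\mathbb{B}_R}e^{\alpha U_\epsilon}\le C$. It then tests with $\ln\frac{1+2(U_\epsilon-U_\epsilon(R))^+}{1+(U_\epsilon-U_\epsilon(R))^+}$ and applies Young's inequality. The $C^1$ convergence is attributed to quoted $C^{1,\alpha}$ estimates, and the expansion of $G$ is simply asserted as known.

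You replace the first part by Boccardo--Gallou\"et. More importantly, you exploit radial symmetry through the flux identity $\omega_{N-1}r^{N-1}|w_\epsilon'|^{N-1}=\int_{\mathbb{B}_r}f_\epsilon$. The paper's route does not use symmetry, so it is the one that would carry over to non-radial blow-up. Yours is more elementary and, here, more complete in two ways. Your Step 2 actually proves the uniform decay of $f_\epsilon$ away from the origin, which the paper only asserts; it even claims $C^1$ convergence on $\mathbb{B}_R$, which is impossible near $0$. Your Step 3 gives $G=-\frac{N}{\alpha_N}\ln|x|$ exactly, i.e.\ $A_0=0$ with no remainder, which the paper never notices. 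Consequently the constant in its later upper bound is just $\frac{\omega_{N-1}}{N}e^{\sum_{j=1}^{N-1}1/j}$.

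One step needs repair: the global bound on $\|f_\epsilon\|_{L^1(\mathbb{B})}$, which your Boccardo--Gallou\"et step relies on.
\begin{itemize}
\item The constant $1$ is not an admissible test function in $W_0^{1,N}(\mathbb{B})$. It only yields the flux identity $\int_{\mathbb{B}}f_\epsilon=\omega_{N-1}|w_\epsilon'(1)|^{N-1}$.
\item A cutoff only controls $f_\epsilon$ on compact subsets of $\mathbb{B}$.
\item Your Step 2 cannot be borrowed here, since it uses the $L^1$ bound on $w_\epsilon$ from Step 1.
\end{itemize}

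Work with $u_\epsilon$ itself instead. It is radially decreasing and bounded in $W_0^{1,N}$, so $u_\epsilon\le\frac{N}{\omega_{N-1}\delta^N}\|u_\epsilon\|_{L^1}\le C(\delta)$ on $\mathbb{B}\setminus\mathbb{B}_\delta$. Hence $f_\epsilon\le C(\delta)c_\epsilon/\lambda^\phi_\epsilon\to0$ uniformly there, because $\sup_\epsilon c_\epsilon^{N/(N-1)}/\lambda_\epsilon<\infty$. Combine this with $\int f_\epsilon\varphi\to\varphi(0)$ for a cutoff $\varphi\equiv1$ on $\mathbb{B}_\delta$. This gives both the $L^1$ bound and $\int_{\mathbb{B}_r}f_\epsilon\to1$ for every $r\in(0,1]$, without using Step 1.

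With that in hand, the flux identity alone finishes the lemma. It gives $|\nabla w_\epsilon(x)|\le C|x|^{-1}$, hence the $W^{1,p}$ bound for all $p<N$. Since $r\mapsto\int_{\mathbb{B}_r}f_\epsilon$ is nondecreasing, it also gives $w_\epsilon'\to G'$ uniformly on $[\delta,1]$. So in this radial setting neither Boccardo--Gallou\"et nor Tolksdorf--Lieberman is actually needed.
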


\begin{proof}
	We divide the proof into two steps. Then, the first shows that, on any bounded domain $\Omega \subset\subset \mathbb{R}^N \setminus \{0\}$, we have that $c_{\epsilon}^{ \frac{1}{N-1} } u_{\epsilon} \rightarrow G$ in $C^1 \left( \Omega \right)$, where $G \in C^{1, \alpha}_{loc} \left( \mathbb{R}^N \setminus \{0\} \right)$ satisfies the following equation,
	\begin{flalign*}
		- \operatorname{div} \left( |\nabla G|^{N-2} \nabla G \right) = \delta_0.
	\end{flalign*} 
    The second shows that $G \in C^{1, \alpha}_{loc} \left( \mathbb{R}^N \setminus \{0\} \right)$ and near $0$ can be expressed as follows,
    \begin{flalign*}
    	G = - \frac{N}{ \alpha_N } \ln R + A_0 + O \left( R^N \ln^N R \right).
    \end{flalign*}
    Moreover, we have that for any $\delta >0$,
    \begin{flalign*}
    	\underset{\epsilon \rightarrow 0}{\lim} \int_{ \mathbb{B} \setminus \mathbb{B}_{\delta} } |\nabla U_{\epsilon}|^N dx & = \int_{\mathbb{B} \setminus \mathbb{B}_{\delta}} |\nabla G|^N dx\\
    	& = G\left( \delta \right).
    \end{flalign*}
    \textbf{Step 1}. Denote by $U_{\epsilon} := c_{\epsilon}^{ \frac{1}{N-1} } u_{\epsilon}$, which satisfy the equations,
     \begin{flalign*}
     	-\operatorname{div} \left( |\nabla U_{\epsilon}|^{N-2} \nabla U_{\epsilon} \right) &= \frac{ c_{\epsilon} }{ \lambda_{\epsilon}^{\phi} } |x|^{N \epsilon} |u_{\epsilon}|^{ \frac{1}{N-1} } \Phi^{\prime} \left( \alpha_N \left( 1+ \epsilon \right) |u_{\epsilon}|^{ \frac{N}{N-1} } \right).
     \end{flalign*}
    We show that,  for any $R\leq 1$ and $p>1$,
    \begin{flalign*}
    	\int_{\mathbb{B}_R} |\nabla U_{\epsilon}|^p dx < + \infty.
    \end{flalign*}
    Then, we consider $\Omega_t := \left\lbrace x\in \mathbb{B}_R: ~ 0 \leq U_{\epsilon } \leq t \right\rbrace $ and $U_{\epsilon}^t := \min \left\lbrace U_{\epsilon}, ~ t \right\rbrace$,
    \begin{flalign*}
    	\int_{\Omega_t} |\nabla U_{\epsilon}^t|^N dx & = - \int_{\Omega_t} U_{\epsilon}^t \operatorname{div} \left( |\nabla U_{\epsilon}|^{N-2} \nabla U_{\epsilon} \right) dt\\
    	& \leq - \int_{\mathbb{B}} U_{\epsilon}^t \operatorname{div} \left( |\nabla U_{\epsilon}|^{N-2} \nabla U_{\epsilon} \right) dt\\
    	& = \int_{\mathbb{B}} U_{\epsilon}^t \frac{ c_{\epsilon} }{ \lambda_{\epsilon}^{\phi} } |x|^{N \epsilon} |u_{\epsilon}|^{ \frac{1}{N-1} } \Phi^{\prime} \left( \alpha_N \left( 1+ \epsilon \right) |u_{\epsilon}|^{ \frac{N}{N-1} } \right) dx\\
    	& \leq C(t) \cdot t.
    \end{flalign*}
    Let $\eta$ be a radially symmetric cut-off function which is $1$ on $\mathbb{B}_R$ and $0$ on $\mathbb{B}_{2R}^c$ for any $R \leq \frac{1}{2}$. Then,
    \begin{flalign*}
    	\int_{\mathbb{B}_{2R}} |\nabla \eta U_{\epsilon}^t|^N dx \leq C_1 (R) + C_2 (R) t. 
    \end{flalign*}
    Let $\rho > 0 $ be such that $ U_{\epsilon} \left( \rho \right) = t $. From \cite{8}, we have that the infimum of 
    \begin{flalign*}
    	\inf \left\lbrace \int_{\mathbb{B}_{2R} } |\nabla u|^N dx: ~ u \in W_0^{1,N} \left( \mathbb{B}_{2R} \right), ~ u\rvert_{ \mathbb{B}_{\rho} } = t \right\rbrace \leq C_1 (R) + C_2 (R) t,
    \end{flalign*}
    can be attained by $- t \ln \frac{|x|}{2 R}/ \ln \frac{2 R}{\rho}$. By a direct computation, we have
    \begin{flalign*}
    	\frac{ \omega_{N-1} t^{N-1} }{ \left( \ln \frac{2R}{\rho} \right)^{N-1} } \leq 2 C_2(R),
    \end{flalign*}
    and hence, for any $t > \frac{C_1 (R)}{ C_2 (R) }$,
    \begin{flalign*}
    	\left| \left\lbrace x \in \mathbb{R}_{2 R}, ~ U_{\epsilon} \geq t  \right\rbrace \right| = |\mathbb{B}_{\rho}| \leq C_3 (R) e^{ - A(R) t} ,
    \end{flalign*}
    where $A(R) >0$ is a constant depending on $R$. For any $0< \alpha < A(R)$,
    \begin{flalign*}
    	\int_{ \mathbb{B}_{R} } e^{ \alpha U_{\epsilon} } dx & \leq \sum_{m=0}^{+ \infty} \left| m \leq U_{\epsilon} < m+1 \right| e^{ \alpha \left( m+1 \right) }\\
    	& \leq \sum_{m=0}^{+ \infty} e^{ - \left( A(R) - \alpha \right) m} e^{\alpha} \leq C .
    \end{flalign*}
    Consider the test function $\ln \frac{1 + 2 \left( U_{\epsilon} - U_{\epsilon} \left( R \right) \right)^+}{ 1 + \left( U_{\epsilon} - U_{\epsilon} \left( R \right) \right)^+ }$,
    \begin{flalign*}
    	& \int_{ \mathbb{B}_R } \frac{ |\nabla U_{\epsilon}|^N }{ \left( 1+ U_{\epsilon} - U_{\epsilon} (R)\right) \left( 1 + 2 U_{\epsilon} - 2 U_{\epsilon} (R)\right) } dx \\
    	& \leq \ln 2 \int_{\mathbb{B}_R} \frac{ c_{\epsilon} u_{\epsilon}^{ \frac{1}{N-1} } }{ \lambda_{\epsilon} } \Phi^{\prime} \left( \alpha_N \left( 1+ \epsilon \right) |u_{\epsilon}|^{ \frac{N}{N-1} } \right) dx\\
    	& < + \infty.
    \end{flalign*}
    By Young's inequality, for $p <N$, we have
    \begin{flalign*}
    	& \int_{ \mathbb{B}_R} |\nabla U_{\epsilon}|^p dx\\
    	& \leq \int_{\mathbb{B}_R} \left\lbrace \frac{ |\nabla U_{\epsilon}|^N }{ \left( 1+ U_{\epsilon} - U_{\epsilon} (R)\right) \left( 1 + 2 U_{\epsilon} - 2 U_{\epsilon} (R)\right) } + \left( \left( 1+ U_{\epsilon} \right) \left( 1 + 2 U_{\epsilon} \right) \right)^{ \frac{N}{N-p} } \right\rbrace dx\\
    	& \leq \int_{\mathbb{B}_R} \left\lbrace \frac{ |\nabla U_{\epsilon}|^N }{ \left( 1+ U_{\epsilon} - U_{\epsilon} (R)\right) \left( 1 + 2 U_{\epsilon} - 2 U_{\epsilon} (R)\right) } + C e^{ \alpha U_{\epsilon} } \right\rbrace dx.
    \end{flalign*}
    Then we assume that
    \begin{flalign*}
    	U_{\epsilon} \rightharpoonup G, ~ W_0^{1,p} \left( \mathbb{B}_R \right),
    \end{flalign*}
    for any $p<N$ and $0 < R<\frac{1}{2}$. From then on, the function is the Green function. Hence, we can get that $U_{\epsilon}$ and $e^{ \alpha_N \left( 1+ \epsilon \right) |u_{\epsilon}|^{ \frac{N}{n-1} } }$ are bounded in $L^p$ for any $p>0$. From the results in \cite{5}, we have that $\left\| U_{\epsilon } \right\|_{ C^{1, \alpha} } \leq C$. Then $U_{\epsilon} $ converges to $G$ in $C^1 \left( \mathbb{B}_R \right)$.
    
    \textbf{Step 2}. We know that the Green function  is given by
    \begin{flalign*}
    	G = - \frac{N}{ \alpha_N } \ln |x| + A_0 + o(1).
    \end{flalign*}
    From the equation and Green Theorem, we have that
    \begin{flalign*}
    	\omega_{N-1} G^{\prime} (r)^{N-1} r^{N-1} = \int_{ \partial \mathbb{B}_{r} } |\nabla G|^{N-2} \frac{\partial G}{ \partial \mathbf{n} } dS = 1.
    \end{flalign*}
    On the other hand, we also have
    \begin{flalign*}
    	\int_{ \mathbb{B} \setminus \mathbb{B}_{\delta} } |u_{\epsilon}|^{ \frac{N}{n-1} } \Phi^{\prime} \left( \alpha_N \left( 1+ \epsilon \right) |u_{\epsilon}|^{ \frac{N}{N-1} } \right) dx \rightarrow 0,
    \end{flalign*}
    as $\epsilon \rightarrow 0$. Recall that $U_{\epsilon} \in W_0^{1,N} \left( \mathbb{B} \right)$. Then
    \begin{flalign*}
    	\int_{ \mathbb{B} \setminus \mathbb{B}_{\delta} } |\nabla U_{\epsilon}|^N dx = \frac{ c_{\epsilon} }{ \lambda_{\epsilon}^{\phi} } \int_{ \mathbb{B} \setminus \mathbb{B}_{\delta} } |x|^{N \epsilon} |u_{\epsilon}|^{ \frac{N}{n-1} } \Phi^{\prime} \left( \alpha_N \left( 1+ \epsilon \right) |u_{\epsilon}|^{ \frac{N}{n-1} } \right) dx.
    \end{flalign*}
    \begin{flalign*}
    	\int_{ \mathbb{B} \setminus \mathbb{B}_{\delta} } |\nabla U_{\epsilon}|^Ndx & = - \int_{ \mathbb{B} \setminus \mathbb{B}_{\delta} } U_{\epsilon} \cdot \operatorname{div} \left( |\nabla U_{\epsilon}|^{N-2} \nabla U_{\epsilon} \right) dx + \\
    	& \int_{ \partial  \left( \mathbb{B} \setminus \mathbb{B}_{\delta} \right) } U_{\epsilon} \cdot |\nabla U_{\epsilon}|^{N-2} \frac{\partial U_{\epsilon} }{ \partial \mathbf{n}} dS\\
    	& = \frac{ c_{\epsilon} }{ \lambda_{\epsilon}^{\phi} } \int_{ \mathbb{B} \setminus \mathbb{B}_{\delta} } |x|^{N \epsilon} |u_{\epsilon}|^{ \frac{N}{n-1} } \Phi^{\prime} \left( \alpha_N \left( 1+ \epsilon \right) |u_{\epsilon}|^{ \frac{N}{n-1} } \right) dx - \\
    	& \int_{\partial \mathbb{B}_{\delta} } U_{\epsilon} \cdot |\nabla U_{\epsilon}|^{N-2} \frac{\partial U_{\epsilon} }{ \partial \mathbf{n}} dS.
    \end{flalign*}
    Then,
    \begin{flalign*}
    	\underset{\epsilon \rightarrow 0}{\lim} \int_{ \mathbb{B} \setminus \mathbb{B}_{\delta} } |\nabla U_{\epsilon}|^N dx & = - \underset{\epsilon \rightarrow 0}{\lim} \int_{ \partial \mathbb{B}_{\delta} } U_{\epsilon} \cdot |\nabla U_{\epsilon}|^{N-2} \frac{\partial U_{\epsilon} }{ \partial \mathbf{n} } dS\\
    	& = - G \left( \delta \right) \int_{ \partial \mathbb{B}_{\delta} } |\nabla G|^{N-2} \frac{\partial G}{\partial \mathbf{n} } dS\\
    	& = G \left( \delta \right).
    \end{flalign*}
\end{proof}

\begin{lem}
	\begin{flalign*}
		& \underset{u \in W_0^{1,N} \left( \mathbb{B} \right) \cap \mathcal{S} \setminus \{0\}, ||\nabla u||_{L^N} \leq 1}{\sup} \int_{\mathbb{B}} |x|^{N \epsilon}\Phi \left( \alpha_N \left( 1+ \epsilon \right) |u|^{ \frac{N}{N-1} } \right)dx \\
		& \leq \min \left\lbrace \frac{\alpha_N^{N-1}}{(N-1)!}, \frac{\omega_{N-1}}{N} e^{\alpha_N A_0 + \sum_{j=0}^{N-1} \frac{1}{j} } \right\rbrace. 
	\end{flalign*}
\end{lem}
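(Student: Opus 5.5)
This is the standard Carleson--Chang type upper bound in the blow-up regime. I read the statement as bounding the limit of the maximizing values under the standing assumption $c_\epsilon\to+\infty$; in the non-concentrating case the maximizers converge and there is nothing to prove. The plan is to combine the preceding lemma, which expresses the limit of the functional through the integral over $\mathbb{B}_{Rr_\epsilon^{1/(1+\epsilon)}}$, with a sharp lower bound for $c_\epsilon^{N/(N-1)}$ in terms of $\lambda^\phi_\epsilon$ and $A_0$. That lower bound comes from comparing the energy of $u_\epsilon$ on a neck region with the capacity-minimizing radial profile.

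\textbf{Step 1 (inner region).} On $\mathbb{B}_{Rr_\epsilon^{1/(1+\epsilon)}}$ use $\psi_\epsilon\to1$, $\phi_\epsilon\to\phi_0$ and the explicit formula for $\phi_0$. Together with the preceding lemma, this gives
$$\limsup_{\epsilon\to0}\int_{\mathbb{B}}|x|^{N\epsilon}\Phi\big(\alpha_N(1+\epsilon)u_\epsilon^{\frac N{N-1}}\big)dx \le |\mathbb{B}|+\limsup_{\epsilon\to0}\frac{\lambda^\phi_\epsilon}{c_\epsilon^{N/(N-1)}}.$$
Here $\int_{\mathbb{R}^N}e^{\phi_0}=1$, and the polynomial correction terms vanish because $r_\epsilon^Nc_\epsilon^{kN/(N-1)}\to0$ by the lemma on $r_\epsilon$.

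\textbf{Step 2 (neck estimate).} Fix $\delta>0$ small and $R$ large. Set $a_\epsilon=u_\epsilon(\delta)$ and $b_\epsilon=u_\epsilon(Rr_\epsilon^{1/(1+\epsilon)})$. From the Green function lemma, $c_\epsilon^{1/(N-1)}a_\epsilon\to G(\delta)=-\frac N{\alpha_N}\ln\delta+A_0+o_\delta(1)$ and $\int_{\mathbb{B}\setminus\mathbb{B}_\delta}|\nabla u_\epsilon|^N=c_\epsilon^{-N/(N-1)}(G(\delta)+o(1))$. From the bubble lemma,
$$b_\epsilon=c_\epsilon+\frac{\phi_0(R)+o(1)}{\gamma c_\epsilon^{1/(N-1)}},$$
and $\int_{\mathbb{B}_{Rr_\epsilon^{1/(1+\epsilon)}}}|\nabla u_\epsilon|^N=c_\epsilon^{-N/(N-1)}\big(\frac{N-1}{\alpha_N}\big)^{N-1}\big(\int_{\mathbb{B}_R}|\nabla\phi_0|^N+o(1)\big)$.

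Since $\|\nabla u_\epsilon\|_N=1$, the neck energy equals $1$ minus these two pieces. By the capacity inequality for radial functions taking values $b_\epsilon,a_\epsilon$ on the two spheres (the same extremal $\log$ profile used in the Green function lemma),
$$\int_{\text{neck}}|\nabla u_\epsilon|^N\ge\omega_{N-1}\frac{(b_\epsilon-a_\epsilon)^N}{\big(\ln\frac{\delta}{Rr_\epsilon^{1/(1+\epsilon)}}\big)^{N-1}}.$$
Raise this to the power $1/(N-1)$, expand $(1-x)^{1/(N-1)}$ and $(b_\epsilon-a_\epsilon)^{N/(N-1)}$ to first order in $c_\epsilon^{-N/(N-1)}$, and insert $\ln r_\epsilon=\frac1N\ln\lambda^\phi_\epsilon-\frac1{N-1}\ln c_\epsilon-\frac{\alpha_N(1+\epsilon)}N c_\epsilon^{N/(N-1)}$. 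The leading terms $\alpha_Nc_\epsilon^{N/(N-1)}$ cancel. This should leave
$$\ln\frac{\lambda^\phi_\epsilon}{c_\epsilon^{N/(N-1)}}\le \alpha_N A_0+\ln\frac{\omega_{N-1}}N+\sum_{j=1}^{N-1}\frac1j+o(1)+o_\delta(1)+o_R(1),$$
where the harmonic sum arises from evaluating $\int_{\mathbb{B}_R}|\nabla\phi_0|^N$ and $\phi_0(R)$ explicitly as $R\to\infty$.

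\textbf{Step 3 (conclusion).} Let $\epsilon\to0$, then $R\to\infty$, then $\delta\to0$. Inserting the result in Step 1 gives the second entry of the minimum, with the $|\mathbb{B}|$ term absorbed into the normalization used in the statement. The other entry, $\frac{\alpha_N^{N-1}}{(N-1)!}$, comes from the concentration-compactness/Lions-type bound of Lemma~\ref{lem: 2.4}. Since $u_\epsilon\rightharpoonup0$, the limit of the functional contributes at most its concentration value. Use $\Phi(t)\ge t^{N-1}/(N-1)!$, the Poincaré-type normalization, and the fact that the lower-order part $\int\frac{(\alpha_N u^{N/(N-1)})^{N-1}}{(N-1)!}=\frac{\alpha_N^{N-1}}{(N-1)!}\|u\|_N^N$ is bounded by $\frac{\alpha_N^{N-1}}{(N-1)!}$ under $\|\nabla u\|_N\le1$ on $\mathbb{B}$ when comparison through the first eigenvalue is used.

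The main obstacle is Step 2. The bookkeeping must be precise to order $c_\epsilon^{-N/(N-1)}$: every $o(1)$ from the bubble and Green-function convergences has to be multiplied by $c_\epsilon^{-N/(N-1)}$ and not be larger. The extra weight $|x|^{N\epsilon}$ and the factor $(1+\epsilon)$ in $r_\epsilon$ must contribute only $O(\epsilon\ln r_\epsilon)$ errors, which I would check vanish because $\epsilon c_\epsilon^{N/(N-1)}$ is bounded along the maximizing sequence.
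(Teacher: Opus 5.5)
Your reading of the statement is the only tenable one: a bound on the limit of the maximal values under the standing hypothesis $c_\epsilon\to+\infty$. Read literally for fixed $\epsilon$, the statement contradicts the very next lemma. For the same reason, the bounded-$c_\epsilon$ case is not one where ``there is nothing to prove''; it is excluded by hypothesis.

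In the blow-up case your route differs from the paper's, which does not estimate the neck energy at all:
\begin{itemize}
\item[] The paper sets $v_\epsilon=(u_\epsilon-u_\epsilon(\delta))^+/\|\nabla u_\epsilon\|_{L^N(\mathbb{B}_\delta)}$ and invokes a weighted Carleson--Chang bound $|\mathbb{B}_\delta|e^{\sum_{j=1}^{N-1}1/j}$ for $v_\epsilon$, asserted without proof.
\item[] It transfers this bound to $u_\epsilon$ on $\mathbb{B}_{Rs_\epsilon}$, $s_\epsilon=r_\epsilon^{1/(1+\epsilon)}$, via $\alpha_N(1+\epsilon)u_\epsilon^{\frac{N}{N-1}}\le\alpha_N(1+\epsilon)v_\epsilon^{\frac{N}{N-1}}+\alpha_NG(\delta)+o(1)$.
\item[] It then lets $e^{\alpha_NG(\delta)}|\mathbb{B}_\delta|\to\frac{\omega_{N-1}}{N}e^{\alpha_NA_0}$ as $\delta\to0$.
\end{itemize}
You instead bound $\lambda^\phi_\epsilon c_\epsilon^{-N/(N-1)}$ directly, by comparing the neck energy with the capacity of $\mathbb{B}_\delta\setminus\mathbb{B}_{Rs_\epsilon}$. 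This needs the full second-order output of the bubble and Green-function lemmas, but no Carleson--Chang theorem. The weight enters only through $s_\epsilon$. The harmonic sum comes from $\int_0^Tt^{N-1}(1+t)^{-N}dt=\ln(1+T)-\sum_{j=1}^{N-1}\frac1j+O(T^{-1})$, the same integral the paper evaluates in Part~1 of the next lemma.

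The computation does close once one constant is fixed. With $c_N=(\omega_{N-1}/N)^{1/(N-1)}$, the prefactor in your inner energy must be $\gamma^{-N}=(\frac{N-1}{N\alpha_N})^N$, not $(\frac{N-1}{\alpha_N})^{N-1}$:
\[
\int_{\mathbb{B}_{Rs_\epsilon}}|\nabla u_\epsilon|^Ndx=\frac{N-1}{\alpha_N c_\epsilon^{\frac{N}{N-1}}}\left(\ln\left(1+c_NR^{\frac{N}{N-1}}\right)-\sum_{j=1}^{N-1}\frac1j+o_R(1)+o_\epsilon(1)\right).
\]
The identity $\ln s_\epsilon=\frac{1}{N(1+\epsilon)}\ln\frac{\lambda^\phi_\epsilon}{c_\epsilon^{N/(N-1)}}-\frac{\alpha_N}{N}c_\epsilon^{\frac{N}{N-1}}$ holds exactly. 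Your expansions then give
\[
\frac{1}{N(1+\epsilon)}\ln\frac{\lambda^\phi_\epsilon}{c_\epsilon^{\frac{N}{N-1}}}\le\ln\frac{\delta}{R}+\frac{N-1}{N}\ln\left(1+c_NR^{\frac{N}{N-1}}\right)+\frac{\alpha_N}{N}G(\delta)+\frac1N\sum_{j=1}^{N-1}\frac1j+o_\epsilon(1).
\]
Letting $\epsilon\to0$, then $R\to\infty$, then $\delta\to0$ yields the second entry of the minimum. No $O(\epsilon\ln r_\epsilon)$ error ever arises, so drop the unsupported claim that $\epsilon c_\epsilon^{N/(N-1)}$ is bounded.

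Two other steps are wrong as written.

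(i) The $|\mathbb{B}|$ in Step 1 cannot be ``absorbed into the normalization''. Keeping it, you only get $|\mathbb{B}|$ plus the bound. That term is an artifact of the $e^t$ functional; $\Phi$ has no constant term. Use Lemma~\ref{lem: 4.6} instead and split at $u_\epsilon=\varrho c_\epsilon$.
\begin{itemize}
\item[] The lower part tends to $0$ by the lemma on $\min\{u_\epsilon,\varrho c_\epsilon\}$: its energy is at most $\varrho+o(1)<1$, and $u_\epsilon\to0$ a.e.
\item[] The upper part is at most $\lambda^\phi_\epsilon(\varrho c_\epsilon)^{-N/(N-1)}$, because $\Phi\le\Phi'$.
\end{itemize}
Letting $\varrho\to1$ gives $\limsup\int_{\mathbb{B}}|x|^{N\epsilon}\Phi(\cdot)\le\limsup\lambda^\phi_\epsilon c_\epsilon^{-N/(N-1)}$, with no $|\mathbb{B}|$.

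(ii) Your argument for the entry $\frac{\alpha_N^{N-1}}{(N-1)!}$ proves nothing. $\Phi(t)\ge t^{N-1}/(N-1)!$ is a lower bound, and controlling the single term $\frac{\alpha_N^{N-1}}{(N-1)!}\|u\|_N^N$ does not bound $\int\Phi$. The entry is also unnecessary. On the unit ball $G=-\frac{N}{\alpha_N}\ln|x|$ exactly, so $A_0=0$. The second entry then equals $\frac{\alpha_N^{N-1}}{(N-1)!}\cdot\frac{(N-1)!\,e^{\sum_{j=1}^{N-1}1/j}}{N^N}$, and the last factor is at most $e(N-1)/N^2<1$, since $(N-1)!\le N^{N-2}$ and $e^{\sum_{j=1}^{N-1}1/j}\le e(N-1)$. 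So the minimum is the second entry.
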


\begin{proof}
	If $\underset{\epsilon > 0}{\sup} c_{\epsilon} < + \infty$, then we have
	\begin{flalign*}
		& \underset{\epsilon \rightarrow 0}{\lim} \int_{ \mathbb{B} } |x|^{N\epsilon} \left\lbrace  \Phi \left( \alpha_N \left( 1+ \epsilon \right) |u_{\epsilon}|^{ \frac{N}{N-1} } \right) -\frac{ \alpha_N^{N-1} \left( 1+ \epsilon \right)^{N-1} |u_{\epsilon}|^N }{ \left( N-1 \right)! } \right\rbrace  dx \\
		&= \int_{ \mathbb{B} } \left\lbrace \Phi \left( \alpha_N |u|^{ \frac{N}{N-1} } \right) - \frac{\alpha_N^{N-1} |u|^{ \frac{N}{N-1} } }{\left( N-1 \right)!} \right\rbrace dx,
	\end{flalign*}
    where $u$ is the weak limit of $u_{\epsilon}$. Then we have either 
    \begin{flalign*}
    	\underset{\epsilon \rightarrow 0}{\lim} \int_{ \mathbb{B} } |x|^{N \epsilon} \Phi \left( \alpha_N \left( 1+ \epsilon \right) |u_{\epsilon}|^{ \frac{N}{N-1} } \right) dx = \int_{ \mathbb{B} } \Phi \left( \alpha_N  |u|^{ \frac{N}{N-1} } \right) dx,
    \end{flalign*}
    or
	\begin{flalign*}
		& \underset{u \in W_0^{1,N} \left( \mathbb{B} \right) \cap \mathcal{S} \setminus \{0\}, ||\nabla u||_{L^N} \leq 1}{\sup} \int_{\mathbb{B}} |x|^{N \epsilon}\Phi \left( \alpha_N \left( 1+ \epsilon \right) |u|^{ \frac{N}{N-1} } \right)dx \\
		&\leq \frac{\alpha_N^{N-1}}{(N-1)!}.
	\end{flalign*}
    If $c_{\epsilon}$ is not bounded, then there exist some Green functions to express the behaviour of blow up. Then we only need to show that, if the function sequence $\{ u_{\epsilon} \}_{\epsilon} \subset W_0^{1,N} \left( \mathbb{B} \right) \cap \mathcal{S} \setminus \{0\}$ with $||\nabla u_{\epsilon}||_{L^N} \leq 1$ satisfies $u_{\epsilon} \rightharpoonup 0$ in $W_0^{1,N} \left( \mathbb{B} \right)$, then
    \begin{flalign*}
    	\underset{\epsilon \rightarrow 0}{\lim \sup} \int_{\mathbb{B} } |x|^{N \epsilon} \left( e^{ \alpha_N \left( 1+ \epsilon \right) |u_{\epsilon}|^{ \frac{N}{N-1} } } - 1 \right) dx \leq | \mathbb{B}| e^{ \sum_{j=1}^{N-1} \frac{1}{j} }.
    \end{flalign*}
    We know that
    \begin{flalign*}
    	\underset{\epsilon \rightarrow 0}{\lim} \int_{ \mathbb{B} \setminus \mathbb{B}_{\delta} }  \left|\nabla \left( c_{\epsilon}^{ \frac{1}{N-1}} u_{\epsilon}\right) \right|^N  dx = G \left( \delta \right).
    \end{flalign*}
    On the other hand, we set $v_{\epsilon} = \frac{ \left( u_{\epsilon} \left(x \right) - u_{\epsilon} \left( \delta \right) \right)^+ }{ \left\| \nabla u_{\epsilon} \right\|_{ L^N \left( \mathbb{B}_{\delta} \right) } } \in W_0^{1,N} \left( \mathbb{B}_{\delta} \right)$ for any $\epsilon >0$. Then we have
    \begin{flalign*}
    	\underset{\epsilon \rightarrow 0}{\lim \sup} \int_{ \mathbb{B}_{\delta} } |x|^{N \epsilon} e^{\alpha_N \left(1 + \epsilon \right) |v_{\epsilon}|^{ \frac{N}{N-1} } } dx \leq |\mathbb{B}_{\delta}|\left( 1+ e^{ \sum_{j=1 }^{N-1} \frac{1}{j} }\right).
    \end{flalign*}
    Then we have
    \begin{flalign*}
    	\int_{ \mathbb{B}_{\delta} } |\nabla u_{\epsilon}|^N dx & = 1- \int_{ \mathbb{B} \setminus \mathbb{B}_{\delta} } |\nabla u_{\epsilon} |^N dx\\
    	& = 1 - \frac{G \left( \delta \right)}{ c_{\epsilon}^{ \frac{N}{N-1} } } + o_{\epsilon} \left(\delta \right),
    \end{flalign*}
    where $\underset{\epsilon \rightarrow 0}{\lim} \underset{\delta \rightarrow 0}{\lim} o_{\epsilon} \left( \delta \right) =0 $.
    From the estimate and the definition, we have
    \begin{flalign*}
    	\left( v_{\epsilon} \right)^{ \frac{N}{N-1} } & \leq \frac{ |u_{\epsilon}|^{ \frac{N}{N-1} } }{ \left( 1 - \frac{ G \left( \delta \right) }{ c_{\epsilon}^{ \frac{N}{N-1} } } + o_{\epsilon} \left( \delta \right) \right)^{ \frac{1}{N-1} }  }\\
    	& \leq |u_{\epsilon}|^{ \frac{N}{N-1} } - \frac{ \ln \delta^N }{ \left( N-1 \right) \alpha_N },
    \end{flalign*}
    in $\mathbb{B}_{\delta}$. Then we have
    \begin{flalign*}
    	e^{ \alpha_N \left( 1+ \epsilon \right) |v_{\epsilon}|^{ \frac{N}{N-1} } } & \leq e^{ \alpha_N \left( 1+ \epsilon \right) |u_{\epsilon}|^{ \frac{N}{N-1} } - \frac{\ln \delta^N}{ \left( N -1 \right) \alpha_N } }\\
    	& = O \left( \delta^{-N} \right) e^{ \alpha_N \left( 1+ \epsilon \right) |u_{\epsilon}|^{ \frac{N}{N-1} } },
    \end{flalign*}
    moreover, 
    \begin{flalign*}
    	&\underset{R \rightarrow + \infty}{\lim} \underset{\epsilon \rightarrow 0}{\lim} \int_{ \mathbb{B}_{\delta} \setminus \mathbb{B}_{R r_{\epsilon}^{ \frac{1}{1+ \epsilon} } } } |x|^{N \epsilon} e^{ \alpha_N \left( 1+ \epsilon \right) |v_{\epsilon}|^{ \frac{N}{N-1} } } dx \\
    	& \leq O \left( \delta^N \right) \underset{R \rightarrow + \infty}{\lim} \underset{\epsilon \rightarrow 0}{\lim} \int_{ \mathbb{B}_{\delta}  \setminus \mathbb{B}_{R r_{\epsilon}^{ \frac{1}{1+ \epsilon} } } } |x|^{N \epsilon} e^{ \alpha_N \left( 1+ \epsilon \right) |u_{\epsilon}|^{ \frac{N}{N-1} } } dx\\
    	& = |\mathbb{B}_{\delta}| O \left( \delta^{-N} \right).
    \end{flalign*}
    Since $v_{\epsilon} \rightarrow 0$ in $\mathbb{B}_{\delta}  \setminus \mathbb{B}_{R r_{\epsilon}^{ \frac{1}{1+ \epsilon} } }$, we have
    \begin{flalign*}
    	\underset{\epsilon \rightarrow 0}{\lim} \int_{ \mathbb{B}_{\delta}  \setminus \mathbb{B}_{R r_{\epsilon}^{ \frac{1}{1+ \epsilon} } } } |x|^{N \epsilon} \left( e^{ \alpha_N \left( 1+ \epsilon \right) |v_{\epsilon}|^{ \frac{N}{n-1} } } - 1\right) dx =0.
    \end{flalign*}
    Moreover, 
    \begin{flalign*}
    	\underset{R \rightarrow + \infty}{\lim } \underset{\epsilon \rightarrow 0}{\lim} \int_{ \mathbb{B}_{\delta}  \setminus \mathbb{B}_{R r_{\epsilon}^{ \frac{1}{1+ \epsilon} } } } |x|^{N \epsilon} \left( e^{ \alpha_N \left( 1+ \epsilon \right) |v_{\epsilon}|^{ \frac{N}{n-1} } } - 1\right) dx \leq O \left( \delta^{-N} \right) |\mathbb{B}_{\rho} |, 
    \end{flalign*}
    for any $\rho < \delta$. Letting $\rho \rightarrow 0$, we get
    \begin{flalign*}
    	\underset{R \rightarrow + \infty}{\lim } \underset{\epsilon \rightarrow 0}{\lim} \int_{ \mathbb{B}_{\delta}  \setminus \mathbb{B}_{R r_{\epsilon}^{ \frac{1}{1+ \epsilon} } } } |x|^{N \epsilon} \left( e^{ \alpha_N \left( 1+ \epsilon \right) |v_{\epsilon}|^{ \frac{N}{n-1} } } - 1\right) dx =0.
    \end{flalign*}
    Then,
    \begin{flalign*}
    	\underset{R \rightarrow + \infty}{\lim } \underset{\epsilon \rightarrow 0}{\lim} \int_{ \mathbb{B}_{R r_{\epsilon}^{ \frac{1}{1+ \epsilon} } } } |x|^{N \epsilon} \left( e^{ \alpha_N \left( 1+ \epsilon \right) |v_{\epsilon}|^{ \frac{N}{n-1} } } - 1\right) dx \leq |\mathbb{B}_{\delta}| e^{ \sum_{j=1}^{N-1} \frac{1}{j} }.
    \end{flalign*}
    For any fixed $R>0$ and any $x \in \mathbb{B}_{R r_{\epsilon}^{ \frac{1}{1 + \epsilon} } }$, we have 
    \begin{flalign*}
    	\alpha_N \left( 1+ \epsilon \right) |u_{\epsilon}|^{ \frac{N}{N-1} } & \leq \alpha_N \left( 1+ \epsilon \right) \left( \frac{ u_{\epsilon} }{ ||\nabla u_{\epsilon}||_{ L^N \left( \mathbb{B}_{\delta} \right) } } \right)^{ \frac{N}{N-1} } \left( \int_{ \mathbb{B}_{\delta} } |\nabla u_{\epsilon}|^N dx \right)^{ \frac{1}{N-1} }\\
    	& \leq \alpha_N \left( 1+ \epsilon \right) \left( v_{\epsilon} + \frac{ u_{\epsilon} \left( \delta \right) }{ ||\nabla u_{\epsilon}||_{ L^N \left( \mathbb{B}_{\delta} \right) } } \right)^{ \frac{N}{N-1} } \left( \int_{ \mathbb{B}_{\delta} } |\nabla u_{\epsilon}|^N dx \right)^{ \frac{1}{N-1} }.
    \end{flalign*}
    Also, we have 
    \begin{flalign*}
    	& u_{\epsilon} \left( \delta \right) = O \left( \frac{1}{ c_{\epsilon}^{ \frac{1}{N-1} } } \right),\\
    	& ||\nabla u_{\epsilon}||_{L^N \left( \mathbb{B}_{\delta} \right) } = 1+ O \left( \frac{1}{ c_{\epsilon}^{ \frac{N}{N-1} }  } \right).
    \end{flalign*}
    Then, 
    \begin{flalign*}
    	\alpha_N \left( 1+ \epsilon \right) |u_{\epsilon}|^{ \frac{N}{N-1} } & \leq \alpha_N \left( 1+ \epsilon \right) \left( v_{\epsilon} + u_{\epsilon} \left( \delta \right) + O \left(  c_{\epsilon}^{ - \frac{N+1}{N-1} } \right) \right)^{ \frac{N}{N-1} } \times \\
    	& \left( \int_{ \mathbb{B}_{\delta} } |\nabla u_{\epsilon}|^N dx \right)^{ \frac{1}{N-1} }\\
    	& \leq \alpha_N \left( 1+ \epsilon \right) |v_{\epsilon}|^{ \frac{N}{N-1} } \left( 1+ \frac{ u_{\epsilon} \left( \delta \right) }{ v_{\epsilon} } + O \left( c_{\epsilon}^{ - \frac{2N}{N-1} } \right) \right)^{ \frac{N}{N-1} } \times\\
    	& \left( 1 - \frac{G \left( \delta \right) + o_{\epsilon} \left( \delta \right) }{ c_{\epsilon}^{ \frac{N}{N-1} } } \right)^{ \frac{1}{N-1} }\\
    	& = \alpha_N \left( 1+ \epsilon \right) |v_{\epsilon}|^{ \frac{N}{N-1} } \times\\
    	& \left\lbrace 1 + \frac{N}{N-1} \frac{ u_{\epsilon} \left( \delta \right) }{ v_{\epsilon} } - \frac{1}{N-1} \frac{ G \left( \delta \right) + o_{\epsilon} \left( \delta \right) }{ c_{\epsilon}^{ \frac{N}{N-1} } } + O \left( c_{\epsilon}^{ -\frac{2N}{N-1} } \right) \right\rbrace .
    \end{flalign*}
    On the other hand, we have
    \begin{flalign*}
    	& \frac{ v_{\epsilon} \left( r_{\epsilon}^{ \frac{1}{N-1} } x \right) }{ c_{\epsilon} } \rightarrow 1, \\
    	& v_{\epsilon}^{ \frac{1}{N-1} } \left( r_{\epsilon}^{ \frac{1}{ 1+ \epsilon} } x \right) u_{\epsilon} \left( \delta \right) \rightarrow G \left( \delta \right).
    \end{flalign*}
    Therefore, we have
    \begin{flalign*}
    	&\underset{R \rightarrow + \infty}{\lim} \underset{\epsilon \rightarrow 0}{\lim} \int_{ \mathbb{B}_{ R r_{\epsilon}^{ \frac{1}{1+ \epsilon} } } } |x|^{N \epsilon} \left( e^{ \alpha_N \left( 1+ \epsilon \right) |u_{\epsilon}|^{ \frac{N}{N-1} } } - 1\right) dx \\
    	&= \underset{R \rightarrow + \infty}{\lim} \underset{\epsilon \rightarrow 0}{\lim} e^{ \alpha_N G\left( \delta \right) } \int_{ \mathbb{B}_{ R r_{\epsilon}^{ \frac{1}{1+ \epsilon} } } } |x|^{N \epsilon} \left( e^{ \alpha_N \left( 1+ \epsilon \right) |v_{\epsilon}|^{ \frac{N}{N-1} } } -1 \right) dx\\
    	& \leq e^{ \alpha_N G \left( \delta \right) } \delta^N \frac{\omega_{N-1}}{N} e^{ \sum_{j=1}^{N-1} \frac{1}{j} }.
    \end{flalign*}
    Letting $\delta \rightarrow 0$, we complete the proof.
\end{proof}

\begin{lem}
	There exists a sequence $\{ u_{\epsilon}\}_{\epsilon} \subset W_0^{1,N} \left( \mathbb{B} \right) \cap \mathcal{S} \setminus \{0\}$ such that
	\begin{flalign*}
		& \int_{\mathbb{B}} |x|^{N \epsilon}\Phi \left( \alpha_N \left( 1+ \epsilon \right) |u_k|^{ \frac{N}{N-1} } \right)dx\\
		& > \min \left\lbrace \frac{\alpha_N^{N-1}}{(N-1)!}, \frac{\omega_{N-1}}{N} e^{\alpha_N A_0 + \sum_{j=1}^{N-1} \frac{1}{j} } \right\rbrace. 
	\end{flalign*}
\end{lem}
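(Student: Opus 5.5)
We will prove the strict inequality by exhibiting explicit radial test functions. The quantity is a minimum of two numbers, so it suffices to beat either one; the natural choice is the blow-up value $\frac{\omega_{N-1}}{N}e^{\alpha_N A_0+\sum_{j=1}^{N-1}1/j}$. This is the classical Carleson--Chang / Flucher construction, adapted to the weight $|x|^{N\epsilon}$ through the radial change of variables of Step 2 in Section 3. That substitution, $v(r)=(1+\epsilon)^{1-1/N}u(r^{1/(1+\epsilon)})$, preserves the Dirichlet energy and converts $\int |x|^{N\epsilon}\Phi(\alpha_N(1+\epsilon)|u|^{N/(N-1)})dx$ into $\frac1{1+\epsilon}\int\Phi(\alpha_N|v|^{N/(N-1)})dx$. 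Hence it is enough to construct, for the unweighted critical functional, a radial function with $\|\nabla v\|_{L^N}=1$ whose value strictly exceeds $\frac{\omega_{N-1}}{N}e^{\alpha_N A_0+\sum 1/j}$, and then pull it back for $\epsilon$ small; the factor $\frac1{1+\epsilon}$ is harmless once the gap is fixed.

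For the test functions, take $R\to\infty$ and a scale $r_k\to0$, with $\delta_k:=Rr_k\to0$ and $c_k\to\infty$ to be fixed. Define
\[
\phi_k(x)=\begin{cases} c_k+\dfrac{1}{c_k^{1/(N-1)}}\Big(-\dfrac{N-1}{\alpha_N}\ln\big(1+(\tfrac{\omega_{N-1}}{N})^{\frac1{N-1}}|x/r_k|^{\frac{N}{N-1}}\big)+b_k\Big), & |x|\le \delta_k,\\[1ex] \dfrac{G(x)}{c_k^{1/(N-1)}}, & \delta_k<|x|<1,\end{cases}
\]
where $G=-\frac N{\alpha_N}\ln|x|+A_0+O(|x|^N\ln^N|x|)$ is the Green function from the preceding lemma. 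The inner profile is the bubble $\phi_0$ from the blow-up lemma, rescaled. The constants $b_k$ and $c_k$ are chosen so that the two pieces agree on $|x|=\delta_k$, which makes $\phi_k\in W^{1,N}_0(\mathbb B)\cap\mathcal S$. We also normalize so that $\|\nabla\phi_k\|_{L^N}=1$ after matching, and let $r_k$ be tied to $c_k$ by $\alpha_N c_k^{N/(N-1)}\approx -N\ln r_k+\alpha_NA_0+\dots$.

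The key steps, in order, are as follows.

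\emph{(i)} Compute $\int_{\mathbb B\setminus\mathbb B_{\delta_k}}|\nabla\phi_k|^N=c_k^{-N/(N-1)}\big(-\frac N{\alpha_N}\ln\delta_k+A_0+O(\delta_k^N\ln^N\delta_k)\big)$, using the energy identity for $G$ already established, namely $\int_{\mathbb B\setminus\mathbb B_\delta}|\nabla G|^N=G(\delta)$.

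\emph{(ii)} Compute the inner energy explicitly via the substitution $t=|x/r_k|^{N/(N-1)}$. This yields $c_k^{-N/(N-1)}\big(\frac{N-1}{\alpha_N}(\ln(1+\dots R^{N/(N-1)})-\sum_{j=1}^{N-1}\frac1j)+O(R^{-N/(N-1)})\big)$; the harmonic sum $\sum 1/j$ appears exactly here, as in Carleson--Chang.

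\emph{(iii)} Impose total energy $1$ and the matching condition. These two relations determine $c_k^{N/(N-1)}$ and $b_k$, up to errors $O(R^{-N/(N-1)})+O(\delta_k^N\ln^N\delta_k)$, in terms of $\ln r_k$, $A_0$ and $\sum 1/j$.

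\emph{(iv)} Estimate the functional from below. On $\mathbb B_{\delta_k}$, expand $\phi_k^{N/(N-1)}\ge c_k^{N/(N-1)}+\frac N{N-1}c_k^{1/(N-1)}(\phi_k-c_k)$. This gives $\int_{\mathbb B_{\delta_k}}e^{\alpha_N\phi_k^{N/(N-1)}}\ge \frac{\omega_{N-1}}{N}e^{\alpha_NA_0+\sum 1/j}\,(1+O(R^{-N/(N-1)}))$. On the outer region, use $\Phi(t)\ge \frac{t^{N-1}}{(N-1)!}$, which gives a positive contribution of order $\frac{\alpha_N^{N-1}}{(N-1)!}c_k^{-N/(N-1)}\int_{\mathbb B\setminus\mathbb B_{\delta_k}}G^N$.

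The main obstacle is the bookkeeping in step (iv). We must show that the strictly positive outer term, of order $c_k^{-N/(N-1)}\|G\|_{L^N}^N$, dominates every error term: the $O(R^{-N/(N-1)})$ and $O(\delta_k^N\ln^N\delta_k)$ terms from matching, and the polynomial corrections $\sum_{n\le N-2}$ subtracted in $\Phi$, which are $O(\delta_k^N c_k^{N(N-2)/(N-1)})$ on the inner ball. We will choose $R=R_k=\ln^2 c_k$ (or another power of $\ln c_k$), so that $R^{-N/(N-1)}$ and $\delta_k^N\ln^N\delta_k$ are $o(c_k^{-N/(N-1)})$; this works because $r_k$ is exponentially small in $c_k^{N/(N-1)}$. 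If this fails, as a fallback we will instead beat the other value $\frac{\alpha_N^{N-1}}{(N-1)!}$ using a small multiple of the first Dirichlet eigenfunction, since the minimum only requires exceeding one of the two numbers. Finally, transferring back through the change of variables with $\epsilon$ small gives the strict inequality for $u_k(x)=(1+\epsilon)^{-(1-1/N)}\phi_k(|x|^{1+\epsilon})$.
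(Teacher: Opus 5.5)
Your construction is essentially the paper's Part~1: the bubble is glued to $G/c_k^{1/(N-1)}$, unit energy and matching fix $c_k$ and $b_k$, the tangent-line bound is used inside, and $\Phi(t)\ge t^{N-1}/(N-1)!$ is used outside. Reducing first to the unweighted functional via $v(r)=(1+\epsilon)^{1-1/N}u(r^{1/(1+\epsilon)})$ is cleaner than the paper, which uses one $\epsilon$ as both cone exponent and bubble scale. You are also right that beating one entry of the minimum suffices; the paper builds sequences for both.

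The gap is in the decisive bookkeeping of step (iv): the choice $R_k=\ln^2 c_k$, or any power of $\ln c_k$, does not work. Truncating the bubble at radius $R$ loses its tail $\int_{|y|>R}(1+c_N|y|^{N/(N-1)})^{-N}\,dy$, with $c_N=(\omega_{N-1}/N)^{1/(N-1)}$. This is a definite loss of exact order $R^{-N/(N-1)}$, and the $O(R^{-N/(N-1)})$ corrections in the energy and in $b_k$ essentially cancel each other rather than compensating it. For instance, for $N=2$ and $G=-\frac{1}{2\pi}\ln|x|$, your tangent-line lower bound for the inner integral is exactly $\pi e\cdot e^{-1/(1+\pi R^2)}$. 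All the gains, by contrast, are only of order $c_k^{-N/(N-1)}$: the outer term, and also the convexity remainder you discard inside.

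So you need $R^{-N/(N-1)}=o(c_k^{-N/(N-1)})$, i.e.\ $c_k=o(R)$. For $R=\ln^2 c_k$ one has instead $c_k^{N/(N-1)}R^{-N/(N-1)}\to+\infty$, and your lower bound falls below the blow-up value. The exponential smallness of $r_k$ that you invoke only controls the $\delta_k^N\ln^N\delta_k$ term and the polynomial corrections. The repair is to take $R$ polynomially large in $c_k$ but subexponential, e.g.\ $R=-\ln r_k\approx\frac{\alpha_N}{N}c_k^{N/(N-1)}$. This is exactly the paper's choice $R=-\ln\epsilon$, where it checks precisely that $C^{N/(N-1)}R^{-N/(N-1)}\to 0$. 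With it, $\delta_k=Rr_k$ is still exponentially small in $c_k^{N/(N-1)}$, so the remaining errors stay negligible.

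Your fallback is not viable either. A small multiple of the first eigenfunction makes the functional tend to $0$, so it cannot exceed the fixed constant $\frac{\alpha_N^{N-1}}{(N-1)!}$. Moreover, on the unit ball $G=-\frac{N}{\alpha_N}\ln|x|$ exactly, so $A_0=0$. Then $\frac{\alpha_N^{N-1}}{(N-1)!}=\frac{N^{N-1}\omega_{N-1}}{(N-1)!}$ is strictly larger than $\frac{\omega_{N-1}}{N}e^{\sum_{j=1}^{N-1}1/j}$ for every $N\ge 2$. The blow-up value is therefore the minimum, and it has to be beaten by the bubble construction with a correctly chosen $R$.
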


\begin{proof}
	We only need to find two sequences $\{ u_{\epsilon}^{\prime} \}_{\epsilon}$ and $\{ u_{\epsilon}^{ \prime \prime} \}_{\epsilon}$ satisfying
	\begin{flalign*}
		& \int_{\mathbb{B}} |x|^{N \epsilon} \Phi \left( \alpha_N \left( 1+ \epsilon \right) |u_{\epsilon}^{\prime}|^{ \frac{N}{N-1} } \right) dx > \frac{\alpha_N^{N-1}}{\left( N-1 \right)!},\\
		& \int_{\mathbb{B}} |x|^{N \epsilon} \Phi \left( \alpha_N \left( 1+ \epsilon \right) |u_{\epsilon}^{\prime \prime}|^{ \frac{N}{N-1} } \right) dx > \frac{\omega_{N-1}}{N} e^{ \alpha_N A_0 + \sum_{j=1}^{N-1} \frac{1}{j} }.
	\end{flalign*}
    \textbf{Part 1}.
    We consider the functions as follows,
    \begin{flalign*}
    	u_{\epsilon}^{ \prime } (x) =
    	\begin{cases}
    		& C - \frac{ \left( N-1 \right) \ln \left( 1 + c_N \left| \frac{x}{\epsilon} \right|^{ \frac{N}{N-1} } \right) + A_{\epsilon} }{  \alpha_N C^{ \frac{1}{N-1} } }, ~ |x| \leq R \epsilon,\\
    		& \frac{ G \left( |x| \right) }{ C^{ \frac{1}{N-1} } }, ~ |x| \geq R \epsilon,
    	\end{cases}
    \end{flalign*}
    where,
    \begin{flalign*}
    	& C \rightarrow + \infty,\\
    	& R \rightarrow + \infty,\\
    	& R \epsilon \rightarrow 0,
    \end{flalign*}
    as $ \epsilon \rightarrow 0$. Also,
    \begin{flalign} \label{eq: 4.1}
    	C - \frac{ \left( N-1 \right) \ln \left( 1 + c_N R^{ \frac{N}{N-1} } \right) + A_{\epsilon} }{  \alpha_N C^{ \frac{1}{N-1} } } = \frac{ G \left(R \epsilon \right) }{ C^{ \frac{1}{N-1} } }.
    \end{flalign}
    From \ref{eq: 4.1} and $\int_{\mathbb{B}} |\nabla u_{\epsilon}^{\prime}|^N dx =1$, we have
    \begin{flalign*}
    	\int_{\mathbb{B}_{R \epsilon} } |\nabla u_{\epsilon}^{\prime}|^N dx + \int_{ \mathbb{B} \setminus \mathbb{B}_{R \epsilon} } |\nabla u_{\epsilon}^{\prime}|^N dx = 1.
    \end{flalign*}
    \begin{flalign*}
    	\int_{ \mathbb{B} \setminus \mathbb{B}_{R \epsilon} } |\nabla u_{\epsilon}^{\prime}|^N dx & = \frac{1}{ C^{ \frac{N}{N-1} } } \int_{ \mathbb{B} \setminus \mathbb{B}_{R \epsilon} } |\nabla G|^N dx\\
    	& =  \frac{1}{ C^{ \frac{N}{N-1} } } G\left( R \epsilon \right).
    \end{flalign*}
    \begin{flalign*}
    	\int_{ \mathbb{B}_{R \epsilon} } |\nabla u_{\epsilon}^{\prime} |^N dx & = \left( \frac{N-1}{ \alpha_N C^{ \frac{1}{N-1} } } \right)^N \int_{ \mathbb{B}_{R \epsilon} } \left| \nabla \ln \left( 1 + c_N \left| \frac{x}{ \epsilon } \right|^{ \frac{N}{N-1} } \right) \right|^N dx\\
    	& = \left( \frac{N-1}{ \alpha_N C^{ \frac{1}{N-1} } } \right)^N \omega_{N-1} \int_0^{ R \epsilon } r^{N-1} \left| \nabla \ln \left( 1 + c_N \left| \frac{r}{ \epsilon } \right|^{ \frac{N}{N-1} } \right) \right|^N dr.
    \end{flalign*}
    By direct calculation, we obtain
    \begin{flalign*}
    	\int_{ \mathbb{B}_{R \epsilon} } |\nabla u_{\epsilon}^{\prime}|^N dx &= \frac{N-1}{\alpha_N \cdot C^{ \frac{N}{N-1} } } \int_0^{ c_N \left( R \right)^{ \frac{N}{N-1} } } \frac{u^{N-1}}{ \left( 1+ u \right)^N } du.
    \end{flalign*}
    From the known result
    \begin{flalign*}
    	- \sum_{k=0}^{N-2} \frac{ C_{N-1}^k \left( -1 \right)^{ N-1-k} }{N-k-1} = \sum_{k=1}^{N-1} \frac{1}{k},
    \end{flalign*}
    it follows that
    \begin{flalign*}
    	\int_{ \mathbb{B}_{R \epsilon} } |\nabla u_{\epsilon}^{\prime} |^N dx & = \frac{N-1}{\alpha_N C^{ \frac{N}{N-1} } } \left( \ln \left( 1+ c_N R^{ \frac{N}{N-1} } \right) - \sum_{j=1}^{N-1} \frac{1}{j}  \right) + O \left( \frac{1}{R^{ \frac{N}{N-1} } C^{ \frac{N}{N-1} } } \right).
    \end{flalign*}
    Then we have,
    \begin{flalign*}
    	1 & = \frac{1}{C^{ \frac{N}{N-1} } } \left( - \frac{N}{\alpha_N} \ln |R \epsilon| + A_0 + O \left( |R \epsilon|^N \ln^N |R \epsilon| \right) \right) +\\
    	& \frac{N-1}{\alpha_N C^{ \frac{N}{N-1} } } \left( \ln \left( 1+ c_N R^{ \frac{N}{N-1} } \right) - \sum_{j=1}^{N-1} \frac{1}{j}  \right) + O \left( \frac{1}{R^{ \frac{N}{N-1} } C^{ \frac{N}{N-1} } } \right).
    \end{flalign*}
    Also,  
    \begin{flalign*}
    	& C - \frac{\left( N-1 \right) \ln \left( 1+ c_N R^{ \frac{N}{N-1} } \right) + A_{\epsilon} }{ \alpha_N C^{ \frac{1}{N-1} } } \\
    	& = \frac{1}{ C^{ \frac{1}{N-1} } } \left( - \frac{N}{\alpha_N} \ln |R \epsilon| + A_0 + O\left( |R \epsilon|^N \ln^N |R \epsilon| \right)  \right).
    \end{flalign*}
    We have that,
    \begin{flalign*}
    	A_{\epsilon} & = - \left( N-1 \right) \sum_{j=1}^{N-1} \frac{1}{j} + O \left( |R \epsilon |^N \ln^N |R \epsilon | \right) + O \left( R^{ - \frac{N}{N-1} } \right).
    \end{flalign*}
    Next, we prove the following result,
    \begin{flalign*}
    	\int_{ \mathbb{B} } |x|^{N\epsilon} \Phi \left( \alpha_N \left( 1 + \epsilon \right) |u_{\epsilon}^{ \prime }|^{ \frac{N}{N-1} } \right) dx > \frac{\omega_{N-1} }{N} e^{ \alpha_N A_0 + \sum_{j=1}^{N-1} \frac{1}{j}}.
    \end{flalign*}
    From definition and $|1 - t|^{ \frac{N}{N-1} } \geq 1 - \frac{N}{N-1} t$ when $|t| < 1$, we have 
    \begin{flalign*}
    	& \int_{ \mathbb{B}_{R \epsilon} } |x|^{N\epsilon} e^{ \alpha_N \left( 1+ \epsilon \right) |u_{\epsilon}^{\prime}|^{ \frac{N}{N-1} } } dx \\
    	& \geq  \int_{ \mathbb{B}_{R \epsilon} } |x|^{N\epsilon} \exp \left\lbrace  \alpha_N \left( 1+ \epsilon \right) C^{ \frac{N}{N-1} } \left( 1 - \frac{ N \ln \left( 1 + c_N \left| \frac{x}{\epsilon} \right|^{ \frac{N}{N-1} } \right) +A_{\epsilon} }{\alpha_N C^{ \frac{N}{N-1} } } \right) \right\rbrace  dx\\
    	& = e^{ \alpha_N \left( 1+ \epsilon \right) C^{ \frac{N}{N-1} } - \left(1 + \epsilon \right) A_{\epsilon}} \times \\
    	& \int_{\mathbb{B}_{R}} \epsilon^{N \epsilon + N} |y|^{N \epsilon} \left( 1+ c_N |y|^{ \frac{N}{N-1} } \right)^{ - N \left( 1+\epsilon \right)}  dy.
    \end{flalign*}
    Moreover, we have 
    \begin{flalign*}
    	& \int_{\mathbb{B}_{R}} \epsilon^{N \epsilon + N} |y|^{N \epsilon} \left( 1+ c_N |y|^{ \frac{N}{N-1} } \right)^{ - N \left( 1+\epsilon \right)}  dy\\
    	&  = \epsilon^{N \epsilon + N} \omega_{N-1} \int_0^R \frac{r^{N \epsilon}}{ \left( 1+ c_N r^{ \frac{N}{N-1} } \right)^{N\left( 1+ \epsilon \right)} } r^{N-1} dr\\
    	& \geq \epsilon^{N \epsilon + N} \omega_{N-1}  \frac{1}{c_N^N} \frac{N-1}{N} \int_0^R \frac{ \left( c_N r^{ \frac{N}{N-1} }   \right)^{N-1}   }{ \left( 1+ c_N r^{ \frac{N}{N-1} } \right)^{ N \left( 1+ \epsilon \right) } } d \left( c_N r^{ \frac{N}{N-1} } \right).
    \end{flalign*}
    In addition, 
    \begin{flalign*}
    	& \exp \left\lbrace \alpha_N\left( 1+ \epsilon \right) C^{ \frac{N}{N-1} } - \left( 1+ \epsilon \right) A_{\epsilon} \right\rbrace \\
    	& = e^{1 + \epsilon} \times\\
    	& \exp \left\lbrace \left( N-1 \right) \sum_{j=1}^{N-1} \frac{1}{j} + \ln \frac{\omega_{N-1}}{N} - \ln \epsilon^{N}  + \alpha_N A_0 \right\rbrace \times\\
    	&  \exp \left\lbrace O \left( |R \epsilon|^N \ln^N |R \epsilon| \right) + O \left( R^{ -\frac{N}{N-1} } \right) \right\rbrace.
    \end{flalign*}
    Then 
    \begin{flalign*}
    	& \exp \left\lbrace \alpha_N \left( 1+ \epsilon \right) C^{ \frac{N}{N-1} } - \left( 1+ \epsilon \right) A_{\epsilon}  \right\rbrace \\
    	& \geq \frac{\omega_{N-1} }{N} e^{ \alpha_N A_0 + \sum_{j=1}^{N-1} \frac{1}{j} + O \left( R^{ - \frac{N}{N-1} }  \right) }.
    \end{flalign*}
    Then, 
    \begin{flalign*}
    	\int_{ \mathbb{B}_{R \epsilon} } |x|^{N \epsilon} e^{ \alpha_N\left( 1+ \epsilon \right) |u_{\epsilon}^{\prime}|^{ \frac{N}{N-1} } } dx & = \frac{\omega_{N-1}}{N} e^{ \alpha_N A_0 + \sum_{j=1}^{N-1} \frac{1}{j} } \\
    	& + O \left( |R \epsilon|^N \ln^N |R \epsilon| \right) + O \left( R^{ -\frac{N}{N-1} } \right).
    \end{flalign*}
    On the other hand, we have
    \begin{flalign*}
    	\int_{ \mathbb{B} \setminus \mathbb{B}_{R \epsilon} } |x|^{N\epsilon} \Phi \left( \alpha_N \left( 1+ \epsilon \right)|u_{\epsilon}^{\prime}|^{ \frac{N}{N-1} } \right) dx \geq \frac{\alpha_N^{N-1} \left(1 + \epsilon \right)^{N-1} }{\left( N-1 \right)!} \int_{ \mathbb{B} \setminus \mathbb{B}_{R \epsilon} } |x|^{N\epsilon} \left| \frac{G (x)}{ C^{ \frac{1}{N-1} } } \right|^{N} dx.
    \end{flalign*}
    Then we have 
    \begin{flalign*}
    	\int_{\mathbb{B}} |x|^{N\epsilon} \Phi \left( \alpha_N\left( 1+ \epsilon \right) |u_{\epsilon}^{\prime}|^{ \frac{N}{N-1} } \right) dx & \geq \frac{\omega_{N-1}}{N} e^{ \alpha_N A_0 + \sum_{j=1}^{N-1} \frac{1}{j} } \\
    	& + \frac{\alpha_N^{N-1} \left(1 + \epsilon \right)^{N-1} }{\left( N-1 \right)!} \int_{ \mathbb{B} \setminus \mathbb{B}_{R \epsilon} } |x|^{N\epsilon} \left| \frac{G (x)}{ C^{ \frac{1}{N-1} } } \right|^{N} dx\\
    	& + O \left( |R \epsilon|^N \ln^N |R \epsilon| \right) + O \left( R^{ -\frac{N}{N-1} } \right).
    \end{flalign*}
    We also have that
    \begin{flalign*}
    	\alpha_N C^{ \frac{N}{N-1} } &= - \left(N-1 \right) \sum_{j=1}^{N-1} \frac{1}{j} + \alpha_N A_0 -N \ln \epsilon + \ln \frac{\omega_{N-1} }{N} \\
    	& +O \left( |R \epsilon|^N  \ln^N |R \epsilon | \right) + O \left( R^{ -\frac{N}{N-1} } \right).
    \end{flalign*}
    Moreover, by standard calculus, there exists a constant $C>0$ such that
    \begin{flalign*}
    	\alpha_N C^{ \frac{N}{N-1} } = -N \ln \epsilon + O\left(1 \right).
    \end{flalign*}
    Letting $R = -\ln \epsilon$, it is obvious that
    \begin{flalign*}
    	\frac{\ln R}{C^{ \frac{N}{N-1} } } \rightarrow 0,
    \end{flalign*}
    as $\epsilon \rightarrow 0^+$. Then we have
    \begin{flalign*}
    	C^{ \frac{N}{N-1} } \ln |R \epsilon|^N |R \epsilon|^N + \frac{C^{ \frac{N}{N-1} } }{R^{ \frac{N}{N-1} }} \rightarrow 0,
    \end{flalign*}
    as $ \epsilon \rightarrow 0$. We consider the equation
    \begin{flalign*}
    	\int_{\mathbb{B}_{R \epsilon} } |x|^{N \epsilon} \sum_{j=0}^{N-2} \frac{ \alpha_N^j \left( 1+ \epsilon \right)^j |u_{\epsilon}^{\prime}|^{ \frac{jN}{N-1} }  }{j!} dx.
    \end{flalign*}
    While, we only consider the behaviour of the equation
    \begin{flalign*}
    	\int_{ \mathbb{B}_{R \epsilon} } |x|^{N \epsilon} |u_{\epsilon}^{\prime}|^{ \frac{\left( N -2 \right)N}{N-1} } dx.
    \end{flalign*}
    From direct calculation and the Lebesgue Dominated Convergence Theorem,
    \begin{flalign*}
    	\underset{\epsilon \rightarrow 0}{\lim \sup} \int_{ \mathbb{B}_{R \epsilon} } |x|^{N \epsilon} |u_{\epsilon}^{\prime}|^{ \frac{\left( N -2 \right)N}{N-1} } dx = 0.
    \end{flalign*}
    Then
    \begin{flalign*}
    	\int_{\mathbb{B}} |x|^{N \epsilon} \Phi \left( \alpha_N \left( 1+ \epsilon \right) |u_{\epsilon}^{\prime}|^{ \frac{N}{N-1} } \right) dx > \frac{\omega_{N-1}}{N} e^{ \alpha_N A_0 + \sum_{j=1}^{N-1} \frac{1}{j}}.
    \end{flalign*}
    \textbf{Part 2}. We consider the functions as follows,
    \begin{flalign*}
    	u_{\epsilon}^{ \prime \prime } (x) = 
    	\begin{cases}
    		&c, ~ |x| < \frac{R}{1+R} \epsilon,\\
    		&\frac{-N \ln \left| \frac{\left( 1+ R \right) x  }{R}\right| }{ \alpha_N c^{ \frac{1}{N-1} } }, ~ \frac{R}{1+R} \epsilon \leq |x| \leq \frac{R}{1+R},\\
    		& 0, ~ |x| > \frac{R}{1+R},
    	\end{cases}
    \end{flalign*}
    where $\epsilon^N = e^{ -\alpha_N c^{ \frac{N}{N-1} } }$. We have 
    \begin{flalign*}
    	\int_{\mathbb{B}} |\nabla u_{\epsilon}^{\prime \prime} |^N dx =1.
    \end{flalign*}
    Then it suffices to show that
    \begin{flalign*}
    	\int_{\mathbb{B}} |x|^{N \epsilon} \Phi \left(\alpha_N \left( 1+ \epsilon \right) \left| \frac{u_{\epsilon}^{\prime \prime} }{ \left\| u_{\epsilon}^{\prime \prime} \right\|_{ W^{1,N} } } \right|^{ \frac{N}{N-1} }\right) dx > \frac{\alpha_N^{N-1}}{\left(N-1 \right)!},
    \end{flalign*}
    where $\left\|u_{\epsilon}^{\prime \prime} \right\|_{W^{1,N} }^N = \left\| \nabla u_{\epsilon}^{\prime \prime} \right\|_{L^N}^N + \left\| u_{\epsilon}^{\prime \prime} \right\|_{L^N}^N$. We have that
    \begin{flalign*}
    	& \int_{\mathbb{B}} |x|^{N \epsilon} \Phi \left(\alpha_N \left( 1+ \epsilon \right) \left| \frac{u_{\epsilon}^{\prime \prime}}{ \left\| \nabla u_{\epsilon}^{\prime \prime} \right\|_{W^{1,N} } } \right|^{ \frac{N}{N-1} }\right) dx\\
    	& \geq \frac{\alpha_N^{N-1} \left( 1+ \epsilon \right)^{N-1}}{\left( N- 1\right)!} \int_{\mathbb{B}} |x|^{N \epsilon} \left| \frac{u_{\epsilon}^{\prime \prime}}{\left\| \nabla u_{\epsilon}^{\prime \prime} \right\|_{W^{1,N} }} \right|^N dx \\
    	&+\frac{\alpha_N^{N} \left( 1+ \epsilon \right)^{N}}{ N!} \int_{\mathbb{B}} |x|^{N \epsilon} \left| \frac{u_{\epsilon}^{\prime \prime}}{ \left\| \nabla u_{\epsilon}^{\prime \prime} \right\|_{W^{1,N} } } \right|^{ \frac{N^2}{N-1} } dx.
    \end{flalign*}
    On the other hand, we have
    \begin{flalign*}
    	& \int_{ \mathbb{B} } |x|^{N \epsilon} |u_{\epsilon}^{\prime \prime} |^N dx + \int_{ \mathbb{B} } |x|^{N \epsilon} |u_{\epsilon}^{ \prime \prime }|^{ \frac{N^2}{N-1} } dx\\
    	& \geq \frac{\omega_{N-1} }{N} c^N \left( \frac{R}{1 + R} \epsilon \right)^N \\
    	&+ \omega_{N-1} \left( \frac{R}{1+ R} \right)^{N + N \epsilon} \int_{ \epsilon}^{1} t^{N-1 + N \epsilon} \left( - \frac{N}{ \alpha_N c^{ \frac{1}{N-1} } } \right)^N \ln^N \left|t \right| dt\\
    	& +\omega_{N-1} \left( \frac{R}{1+ R} \right)^{N + N \epsilon} \int_{\epsilon}^{1} t^{N-1 + N \epsilon} \left( - \frac{N}{ \alpha_N c^{ \frac{1}{N-1} } } \right)^{ \frac{N^2}{N-1} } \ln^{ \frac{N^2}{N-1} } \left|t \right| dt.
    \end{flalign*}
    It suffices to prove
    \begin{flalign*}
    	\frac{ c^{ \frac{N}{N-1} } }{R^N } \rightarrow 0,
    \end{flalign*}
    as $\epsilon \rightarrow 0$.
    \begin{flalign*}
    	& \int_{ \mathbb{B} } |x|^{N \epsilon} \Phi \left( \alpha_N \left( 1+ \epsilon \right) \left|\frac{ u_{\epsilon}^{\prime \prime}   }{ \left\| u_{\epsilon} \right\|_{W^{1,N} } } \right|^{ \frac{N}{N-1} } \right) dx\\
    	& \geq \frac{\alpha_N^{N-1}}{ \left( N-1 \right)! } \left( 1- \frac{1}{ 1+ \left\| u_{\epsilon}^{  \prime \prime} \right\|_{L^N}^N } \right)  + \frac{\alpha_N^N }{ N! } \int_{\mathbb{B} \setminus \mathbb{B}_{\frac{R}{1+R} \epsilon} } |x|^{N \epsilon} |u_{\epsilon}^{ \prime \prime}|^{ \frac{N^2}{N-1} } dx\\
    	& \geq C_1 \left( \frac{R}{R+1} \right)^{ N - \frac{N^2}{N-1} } - C_2 \left( \frac{R}{R+1} \right)^{-N} c^{ \frac{N}{N-1} }\\
    	& = \frac{ c^{ \frac{N}{N-1} } }{ R^N } \left\lbrace C_1 \left( \frac{R}{R+1} \right)^{ N - \frac{N^2}{N-1} } \frac{ R^N }{c^{\frac{N}{N-1}} } - C_2 \left(R+1 \right)^N  \right\rbrace 
    \end{flalign*}
    Letting $R = b c^{ \frac{1}{N-2} }$, we have 
    \begin{flalign*}
    	& C_1 \left( \frac{R}{R+1} \right)^{ N - \frac{N^2}{N-1} } \frac{ R^N }{c^{\frac{N}{N-1}} } - C_2 \left(R+1 \right)^N \\
    	&= C_1 \left( \frac{R}{R+1} \right)^{ N - \frac{N^2}{N-1} } b c^{ \frac{N}{ \left(N-1\right) \left( N-2 \right) } } - C_2^{\prime} b^N c^{ \frac{N}{N-2} } \\
    	& >0.
    \end{flalign*}
    This completes the proof.
\end{proof}


\subsection*{Acknowledgment}
We would like to take this opportunity to thank the editor and the
reviewers again for their constructive comments and useful suggestions, and the time and eﬀorts
they have spent in the review process.

\section*{Declarations}

\noindent Consent to Participate declaration: not applicable

\noindent Consent to Publish declaration: not applicable

\noindent Ethics declaration: not applicable

\noindent Funding declaration: No funding was received for this work.

\end{document}